\newcommand{\Dt}{\Delta t}
\newcommand{\figref}[1]{{Figure~\ref{#1}}}
\newcommand{\secref}[1]{{Section~\ref{#1}}}
\renewcommand{\eqref}[1]{{(\ref{#1})}}
\newtheorem{assumption}[theorem]{Assumption}
\newcommand{\thmref}[1]{{Theorem~\ref{#1}}}
 \newcommand{\lemref}[1]{{Lemma~\ref{#1}}}
 \newcommand{\assref}[1]{{Assumption~\ref{#1}}}
\newcommand{\propref}[1]{{Proposition~\ref{#1}}}
\newcommand{\Shdt}{S_{h,\Dt}}
\begin{document}

\title{Strong convergence analysis of the stochastic exponential Rosenbrock scheme for the finite element discretization of  semilinear   SPDEs driven by  multiplicative  and additive noise}


\titlerunning{Stochastic exponential Rosenbrock method for  SPDEs.}        

\author{Jean Daniel Mukam,
    Antoine Tambue 
}

\authorrunning{J. D. Mukam, A. Tambue} 

\institute{ J. D. Mukam \at
              \\
            Technische Universit\"{a}t Chemnitz, 09126 Chemnitz, Germany.  \\
            Tel.: +49-15213471370 \\
          \email{ jean.d.mukam@aims-senegal.org \\
          \hspace*{1cm}  jean-daniel.mukam@s2015.tu-chemnitz.de}           
           \and
           A. Tambue (Corresponding author) \at
            The African Institute for Mathematical Sciences(AIMS) of South Africa and Stellenbosh University,\\
            Center for Research in Computational and Applied Mechanics (CERECAM), and Department of Mathematics and Applied Mathematics, University of Cape Town, 7701 Rondebosch, South Africa.\\
             Tel.: +27-785580321\\
   \email{antonio@aims.ac.za, tambuea@gmail.com} 
}
\date{Received: date / Accepted: date}

\maketitle

\begin{abstract}
In this paper, we consider  the numerical approximation of a general second order semilinear stochastic partial differential equation (SPDE) driven by multiplicative and additive noise.
Our main interest is on such SPDEs where the nonlinear part is stronger than the linear part also called stochastic reactive dominated transport equations.
Most numerical techniques, including current stochastic exponential integrators lose their good stability properties on such equations. 
Using finite element for space discretization, we propose a new scheme  appropriated on such equations, called stochastic exponential Rosenbrock scheme (SERS) based 
on local linearization at every time step of the semi-discrete equation obtained after space discretization.
We consider noise with finite trace  and give a strong convergence proof  of the new scheme toward the exact solution in the  root-mean-square  $L^2$ norm.
Numerical experiments to sustain theoretical results are provided.

\keywords{ Exponential Rosenbrock-Euler method\and Stochastic partial differential equations \and  Multiplicative \& additive  noise \and Strong  convergence \and Finite 
element method \and Errors estimate\and Stochastic reactive dominated transport equations.}
  \subclass{MSC 65C30  \and MSC 74S05 \and MSC 74S60  }
  
\end{abstract}

\section{Introduction}
\label{intro}
The strong  numerical approximation of an It\^{o}
stochastic partial differential equation  defined in  the bounded  domain $ \Lambda \subset
\mathbb{R}^{d}\,(d=1,2,3)$ is analyzed. The domain $ \Lambda$ is assumed to be a convex polygon, or   has smooth boundary.
Boundary conditions on the domain $ \Lambda$
are typically Neumann, Dirichlet or Robin conditions.
More precisely, we consider in  the abstract setting the following stochastic partial differential equation
\begin{eqnarray}
\label{model}
dX(t)=[AX(t)+F(X(t))]dt+B(X(t))dW(t), \quad X(0)=X_0, \quad t\in[0,T],
\end{eqnarray}
on $H=L^2(\Lambda)$, $T>0$ is a final time, $F$ and $B$ are nonlinear functions,  $X_0$ is the initial data which  may be random,
$A$ is a linear operator, unbounded, not necessarily self adjoint, and the generator of an analytic semigroup $S(t):=e^{t A}, t\geq 0.$
The noise  $W(t)=W(x,t)$ is a $Q-$Wiener process defined in a filtered probability space $(\Omega,\mathcal{F}, \mathbb{P}, \{\mathcal{F}_t\}_{t\geq 0})$.
The filtration is assumed to fulfill the usual conditions (see \cite[Definition 2.1.11]{Prevot}).
We assume that the noise can be represented as 
\begin{eqnarray}
\label{noise}
W(x,t)=\sum_{i\in \mathbb{N}^d}\sqrt{q_i}e_i(x)\beta_i(t), \quad t\in[0,T],
\end{eqnarray} 
where $q_i$, $e_i$, $i\in\mathbb{N}^d$ are respectively the eigenvalues and the eigenfunctions of the covariance operator $Q$,
and $\beta_i$ are independent and identically distributed standard Brownian Motions. Precise assumptions on $F$, $B$, $X_0$ and $A$ 
will be given in the next section to ensure the existence of the unique mild solution $X$ of \eqref{model} which has the following representation (see \cite{Prato,Prevot}) for $t\in(0,T]$
\begin{eqnarray}
\label{mild1}
X(t)=S(t)X_0+\int_0^tS(t-s)F(X(s))ds+\int_0^tS(t-s)B(X(s))dW(s).
\end{eqnarray}
In few cases,  exact solutions are explicitly available, so  numerical techniques  are  the only tools to provide good approximations 
in more general cases (see for examples \cite{Antonio1,Printems,Raphael,Yan1,Yan2,Kovac1,Xiaojie1,Jentzen1}).
Approximations are done at two levels,  spatial approximation and  temporal approximation. For the spatial approximation, the finite difference, the 
finite element method and the Galerkin spectral method are usually used \cite{Antonio1,Raphael,Yan1,Yan2,Jentzen1,shardlow:2003}. 
As for PDEs, standard explicit time stepping methods for SPDEs are usually unstable for stiff problems and therefore  severe time step constraint is needed.
To overcome that drawback, standard implicit Euler  methods are usually used \cite{Printems,Raphael,Xiaojie2}.
Although standard  implicit Euler   methods \footnote{Full implicit or semi-implicit methods} are stable, their implementation 
requires significantly more computational effort, specially full implicit methods,
as Newton method is usually used to solve nonlinear algebraic equations at each time step. Recently, stochastic exponential integrators \cite{Antonio1,Xiaojie1,Jentzen1} 
  appeared as non standard explicit methods  efficient for SPDE \eqref{model}.
 All stochastic exponential integrators analyzed  in the literature for SPDEs \cite{Antonio1,Xiaojie1,Jentzen1} are bounded on the nonlinear problem as in \eqref{model}
 where the linear part $A$ and the nonlinear function $F$ are explicitly known a priori. Such approach is justified in situations where the nonlinear function $F$ is small. 
 Indeed when $F$ is small the linear operator $A$ drives the SPDE \eqref{model} and the good stability of the exponential integrators and semi-implicit method are ensured.
 In fact, in more realistic applications the nonlinear function $F$ 
 can be stronger. Typical examples are  semilinear  advection  diffusion reaction equations with stiff reaction term. In such  cases,  the SPDE \eqref{model}
 is driven by the nonlinear operator $F$ and both exponential integrators \cite{Antonio1,Xiaojie1,Jentzen1} and semi-implicit Euler \cite{Printems,Xiaojie2}
 will behave as explicit Euler-Maruyama scheme (see \secref{motivation}), therefore their good stability properties are lost.
 To overcome  this issue  we propose in  this work a new scheme called Stochastic Exponential Rosenbrock Scheme (SERS).
 Coupled with finite element  for space  discretization, the new scheme is based on a local linearization of the drift term at each time  step  in the  
 corresponding semi-discrete  problem of \eqref{model}. 
 The local linearization therefore weakens the nonlinear part of the drift such that  the linearized semi-discrete problem is driven by its linear part, which change at each time step.
 The standard stochastic exponential scheme \cite{Antonio1} is applied  at the end  to that linearized semi-discrete problem and the corresponding scheme is our new scheme.
 The challenge here is to deal with the new discrete semigroup which indeed is a semigroup  process, called stochastic perturbed semigroup.
 The key idea comes from the deterministic exponential Rosenbrock  method \cite{Alex1,Ostermann1,Ostermann2,Ramos,Antjd1}.
 Note that similar  schemes  for stochastic differential equations in finite dimensions have been proposed in \cite{Mora2,Mora1}.
 Using some  deterministic tools from \cite{Antjd1}, we propose a strong convergence proof of the new schemes where the linear operator $A$ is not necessarily self adjoint.
 Note that the orders of convergence  are the same with stochastic exponential schemes proposed in\cite{Antonio1}.  
 The deterministic part of  this scheme  is of order $2$ in time and  has been proven  to be efficient and robust in comparison to  standard schemes in many applications
 \cite{Advances, SebaGatam} where  the perturbed semigroup and related matrix functions
 have been computed using  the Krylov subspace technique \cite{kry} and fast Leja  points  technique \cite{LE1,Advances}. 
 For our new  stochastic scheme,  numerical simulations show  its good stability behavior  compared
 with a stochastic exponential scheme proposed  in \cite{Antonio1}, where the stochastic perturbed semigroup and related  matrix functions are computed  using Krylov subspace technique.\\
The rest of this paper is organized as follows. \secref{wellposed} is devoted to the mathematical setting, the numerical method and the main result. 
In  \secref{convergenceproof} some preparatory results and the proof of the main result are provided.  
In \secref{experiment} we provide some numerical experiments to sustain our theoretical results. We end  the paper in \secref{concludremark} by providing a concluding remark.

\section{Mathematical setting and main results}
\label{wellposed}
\subsection{Main assumptions and well posedness}
\label{notation}
Before we state the well posedness result,  let us define key functional spaces, norms and notations that  will be used in the rest of the paper. 
Let  $(H,\langle.,.\rangle_H,\Vert .\Vert)$ be a separable Hilbert space.  For all $p\geq 2$ and for a Banach space $U$,
we denote by $L^p(\Omega, U)$ the Banach space of all equivalence classes of $p$ integrable $U$-valued random variables. We denote  by $L(U,H)$ 
 the space of bounded linear mappings from $U$ to $H$ endowed with the usual  operator norm $\Vert .\Vert_{L(U,H)}$. By  $\mathcal{L}_2(U,H):=HS(U,H)$,
 we  denote the space of Hilbert-Schmidt operators from $U$ to $H$. 
  We equip $\mathcal{L}_2(U,H)$ with the norm
 \begin{eqnarray}
 \label{def1}
 \Vert l\Vert^2_{\mathcal{L}_2(U,H)} :=\sum_{i=1}^{\infty}\Vert l\psi_i\Vert^2,\quad l\in \mathcal{L}_2(U,H),
 \end{eqnarray}
 where $(\psi_i)_{i=1}^{\infty}$ is an orthonormal basis of $U$. Note that this definition is independent of the orthonormal basis of $U$.
  For simplicity, we use the notations $L(U,U)=:L(U)$ and $\mathcal{L}_2(U,U)=:\mathcal{L}_2(U)$. It is well known that for all $l\in L(U,H)$ and $l_1\in\mathcal{L}_2(U)$, $ll_1\in\mathcal{L}_2(U,H)$ and 
  \begin{eqnarray}
  \label{trace1}
  \Vert ll_1\Vert_{\mathcal{L}_2(U,H)}\leq \Vert l\Vert_{L(U,H)}\Vert l_1\Vert_{\mathcal{L}_2(U)}.
  \end{eqnarray}
 Throughout this paper $W(t)$ is a $Q$-wiener process. We assume that the covariance operator  $Q : H\longrightarrow H$ is positive and self-adjoint.  
 The space of Hilbert-Schmidt operators from  $Q^{1/2}(H)$ to $H$ is denoted by $L^0_2:=\mathcal{L}_2(Q^{1/2}(H),H)=HS(Q^{1/2}(H),H)$  
 with the corresponding norm $\Vert.\Vert_{L^0_2}$  defined by 
\begin{eqnarray*}
\Vert l\Vert_{L^0_2} :=\Vert lQ^{1/2}\Vert_{HS}=\left(\sum_{i=1}^{\infty}\Vert lQ^{1/2}e_i\Vert^2\right)^{1/2}, \quad  l\in L^0_2,
\end{eqnarray*} 
where $(e_i)_{i=1}^{\infty}$ is an orthonormal basis  of $H$.
This definition is independent of the orthonormal basis of $H$. In the rest of the paper, we take $H=L^2(\Lambda)$.\\
In order to ensure the existence and the uniqueness of solution of \eqref{model} and for the purpose of the convergence analysis, we make the following assumptions.
\begin{assumption}\textbf{[Linear operator $A$]}
\label{assumption1}
$A : \mathcal{D}(A)\subset H\longrightarrow H$ is a negative  generator of an analytic semigroup $S(t):=e^{At}$.
\end{assumption}
\begin{assumption}\textbf{[Initial value $X_0$]}
\label{assumption2}
We assume that $X_0\in L^p(\Omega, \mathcal{D}((-A)^{\beta/2}))$, $0\leq\beta\leq 2$, $p\geq 2$.
\end{assumption}
As in the current literature on deterministic exponential Rosenbrock-Type methods \cite{Julia1,Julia2,Antjd1,Alex1,Ostermann2}, we make the following assumption on the nonlinear term.
\begin{assumption}\textbf{[Nonlinear term $F$]}
\label{assumption3} 
We assume that the nonlinear mapping  $F: H\longrightarrow H$ is  Fr\'{e}chet differentiable  with bounded derivative, i.e. there exists a constant $C>0$ such that 
\begin{eqnarray*}
 \Vert F'(v)\Vert_{L(H)}\leq C, \quad \forall\, v\in H.
\end{eqnarray*}
\end{assumption}
Assumption \ref{assumption3} together with the mean value theorem show that there exists a constant  $L>0$ such that 
\begin{eqnarray}
\label{reviewinequal1}
\Vert F(Y)-F(Z)\Vert \leq L\Vert Y-Z\Vert, \quad Y,Z\in H.
\end{eqnarray}
As a consequence of \eqref{reviewinequal1}, there exists a positive constant $C$ such that
\begin{eqnarray*}
\Vert F(Z)\Vert&\leq& \Vert F(0)\Vert+\Vert F(Z)-F(0)\Vert\\
&\leq& \Vert F(0)\Vert+L\Vert Z\Vert\leq C (1+\Vert Z\Vert), \quad Z\in H.
\end{eqnarray*}
Following \cite[Chapter 7]{Prato} or \cite{Yan2,Antonio1,Arnulf1,Raphael} we make the following assumption on the diffusion term.
 \begin{assumption}\textbf{[Diffusion term ]} 
 \label{assumption4}
  We assume that the operator  $B : H \longrightarrow L^0_2$ satisfies the global Lipschitz condition, i.e. there exists a positive constant $C$ such that 
 \begin{eqnarray*}
 \Vert B(Y)-B(Z)\Vert_{L_2^0}\leq C\Vert Y-Z\Vert, \quad \forall\quad Y,Z\in H.
 \end{eqnarray*}
 As a consequence, there exists a positive constant $L>0$ such that 
 \begin{eqnarray*}
 \Vert B(Z)\Vert_{L^0_2}&\leq& \Vert B(0)\Vert_{L^0_2}+\Vert B(Z)-B(0)\Vert_{L^0_2}\nonumber\\
 &\leq& \Vert B(0)\Vert_{L^0_2}+C\Vert Z\Vert
 \leq L(1+\Vert Z\Vert), \quad \forall Z\in H.
 \end{eqnarray*}
 \end{assumption}
 To establish  our $L^2$ strong convergence result when dealing with multiplicative noise, we will also need the following further assumption on the diffusion term when $\beta \in [1,2)$, which was also used in  \cite{Antonio1,Arnulf1,Raphael,Stig1}.
\begin{assumption}
\label{assumption5}
We assume that there exist two positive constants $c>0$, and $\gamma\in(0,\frac{\beta}{10}]$  small enough such 
that $B(\mathcal{D}(-A)^{\gamma/2})\subset HS(Q^{1/2}(H),\mathcal{D}(-A)^{\gamma/2})$ and 
$\Vert (-A)^{\gamma/2}B(v)\Vert_{L^0_2}\leq c(1+\Vert (-A)^{\gamma/2}v\Vert)$ for all  $v\in\mathcal{D}((-A)^{\gamma/2})$, where $\beta$ is the parameter defined in Assumption \ref{assumption2}.
\end{assumption}
Typical examples satisfying \assref{assumption5} are stochastic reaction diffusion equations (see \cite[Section 4]{Arnulf1}).

 When dealing with additive noise, the strong convergence proof will make use of the following assumption on the noise.
 \begin{assumption}
 \label{assumption6a}
  We assume  that the covariance operator $Q : H\longrightarrow H$  satisfies the following estimate
 \begin{eqnarray}
 \left\Vert (-A)^{\frac{\beta-1}{2}}Q^{\frac{1}{2}}\right\Vert_{\mathcal{L}_2(H)}<\infty, 
 \end{eqnarray}
 where $\beta$ is defined in \assref{assumption2}.
 \end{assumption}
We equip $V_{\alpha}:=\mathcal{D}((-A)^{\alpha/2})$, $\alpha\in \mathbb{R}$ with the norm  $\Vert v\Vert_{\alpha}:=\Vert (-A)^{\alpha/2}v\Vert$, for all $v\in H$. 
It is well known that $(V_{\alpha}, \Vert .\Vert_{\alpha})$ is a Banach space \cite{Henry}. 

To achieve optimal order when dealing with additive noise, we require the nonlinear function $F$ to satisfy the following further assumption, also used in \cite{Antonio2,Xiaojie2,Xiaojie1,Xiaojie3}.
\begin{assumption}
\label{assumption6b}
We assume  that the deterministic mapping $F:   H \longrightarrow H$ is twice  differentiable and there exists a positive constant $C$ such that
\begin{eqnarray}
 \Vert F''(u)(v_1,v_2)\Vert_{-\eta}\leq C\Vert v_1\Vert.\Vert v_2\Vert, \quad u, v_1,v_2\in H,\quad\text{for some }\, \eta\in[1,2).
\end{eqnarray}

\end{assumption}

Let us recall in the following proposition  some  semigroup properties of the operator $S(t)$ generated by $A$ \footnote{The proposition 
indeed is general and provides some estimates for any semigroup and its generator.} that will be useful in the rest of the paper.  
 \begin{proposition}\textbf{[Smoothing properties of the semigroup]}\cite{Henry}
 \label{theorem1}
 \label{prop1} Let $\alpha > 0$, $\delta\geq 0$  and $0\leq \gamma\leq 1$, then there exists a constant $C>0$ such that 
 \begin{eqnarray*}
 \Vert (-A)^{\delta}S(t)\Vert_{L(H)}&\leq& Ct^{-\delta}, \quad t>0,\\
 \Vert (-A)^{-\gamma}(\mathbf{I}-S(t))\Vert_{L(H)}&\leq& Ct^{\gamma}, \quad t\geq 0,\\
 (-A)^{\delta}S(t)&=&S(t)(-A)^{\delta} \quad \text{on} \quad \mathcal{D}((-A)^{\delta}),\\
 \Vert D^l_tS(t)v\Vert_{\delta}&\leq& Ct^{-l-(\delta-\alpha)/2}\Vert v\Vert_{\alpha},\quad t>0,\quad v\in\mathcal{D}((-A)^{\alpha/2}),
 \end{eqnarray*}
 where $l=0,1$, 
 and  $D^l_t=\dfrac{d^l}{dt^l}$.\\
 If $\delta\geq \gamma$ then  $\mathcal{D}((-A)^{\delta})\subset \mathcal{D}((-A)^{\gamma})$.
 \end{proposition}

\begin{theorem}\textbf{[Well posedness result]}\cite[Theorem 7.4]{Prato}\\
\label{theorem2}
Let  \assref{assumption1}, \assref{assumption3} and \assref{assumption4} be satisfied. If $X_0$ is a $\mathcal{F}_0$- measurable $H$ valued random variable, 
then there exists a unique mild solution $X$ of problem \eqref{model} of the form \eqref{mild1} and  satisfying the following 
\begin{eqnarray*}
\mathbb{P}\left[\int_0^T\Vert X(s)\Vert^2ds<\infty\right]=1,
\end{eqnarray*}
and for any $p\geq 2$ there exists a constant $C=C(p,T)>0$ such that 
\begin{eqnarray*}
\sup_{t\in[0,T]}\mathbb{E}\Vert X(t)\Vert^p\leq C(1+\mathbb{E}\Vert X_0\Vert^p).
\end{eqnarray*}
\end{theorem}
Furthermore   from \cite[Theorem 1]{Arnulf1} or \cite[Theorem 2.6]{Antonio1} it holds that for all $\gamma\in[0,1)$, 
for all $p\geq 2$ there exists a positive constant $C$ such that 
\begin{eqnarray}
\label{regular1a}
\left(\mathbb{E}\Vert X(t)\Vert^p_{\gamma}\right)^{1/p}\leq C\left(1+(\mathbb{E}\Vert X_0\Vert^p_{\gamma})^{1/p}\right),\quad t\in[0,T].
\end{eqnarray}
\subsection{Finite element discretization}
\label{fullerror}
In the rest of this paper, to simplify the presentation, we assume that the linear operator $A$ is of second order. More precisely, 
we assume that our SPDE \eqref{model} is a second order semilinear parabolic and takes the form
\begin{eqnarray}
\label{secondorder}
dX(t,x)&=&[\nabla \cdot \left(\mathbf{D}\nabla X(t,x)\right)-\mathbf{q} \cdot \nabla X(t,x)+f(x,X(t,x))]dt\nonumber\\
&+&b(x,X(t,x))dW(t,x), \quad x\in\Lambda, \quad t\in[0,T],
\end{eqnarray}
where the functions $f : \Lambda\times \mathbb{R}\longrightarrow \mathbb{R}$ and
$b : \Lambda\times\mathbb{R}\longrightarrow \mathbb{R}$ are continuously differentiable with globally bounded derivatives.
In the abstract framework \eqref{model}, the linear operator $A$ takes the form
\begin{eqnarray}
\label{operator}
Au&=&\sum_{i,j=1}^{d}\dfrac{\partial}{\partial x_i}\left(D_{ij}(x)\dfrac{\partial u}{\partial x_j}\right)-\sum_{i=1}^dq_i(x)\dfrac{\partial u}{\partial x_i},\\
\mathbf{D}&=&\left(D_{i,j} \right)_{1\leq i,j \leq d}\,\,\,\,\,\,\, \mathbf{q}=\left( q_i \right)_{1 \leq i \leq d}.
\end{eqnarray}
where $D_{ij}\in L^{\infty}(\Lambda)$, $q_i\in L^{\infty}(\Lambda)$. We assume that there is a positive constant $c_1>0$ such that 
\begin{eqnarray*}
\sum_{i,j=1}^dD_{ij}(x)\xi_i\xi_j\geq c_1|\xi|^2, \quad \forall \xi\in \mathbb{R}^d,\quad x\in\overline{\Omega}.
\end{eqnarray*}
The functions $F : H\longrightarrow H$ and $B : H\longrightarrow HS(Q^{1/2}(H), H)$ are defined by 
\begin{eqnarray}
\label{nemystskii}
(F(v))(x)=f(x,v(x)) \quad \text{and} \quad (B(v)u)(x)=b(x,v(x)).u(x),
\end{eqnarray}
for all $x\in \Lambda$, $v\in H$, $u\in Q^{1/2}(H)$, with $H=L^2(\Lambda)$.
 For an appropriate family of eigenfunctions $(e_i)$ such that $\sup\limits_{i\in\mathbb{N}^d}\left[\sup\limits_{x\in \Lambda}\Vert e_i(x)\Vert\right]<\infty$, 
 it is well known \cite[Section 4]{Arnulf1} that the Nemystskii operator $F$ related to $f$ and the multiplication operator $B$ associated 
 to $b$ defined in \eqref{nemystskii} satisfy Assumption \ref{assumption3}, Assumption \ref{assumption4} and Assumption \ref{assumption5}.
As in \cite{Antonio1,Suzuki}, we introduce two spaces $\mathbb{H}$ and $V$, such that $\mathbb{H}\subset V$; the two spaces depend on the boundary 
conditions of $\Lambda$ and the domain of the operator $A$. For  Dirichlet (or first-type) boundary conditions we take 
\begin{eqnarray*}
V=\mathbb{H}=H^1_0(\Lambda)=\{v\in H^1(\Lambda) : v=0\quad \text{on}\quad \partial \Lambda\}.
\end{eqnarray*}
For Robin (third-type) boundary conditions and  Neumann (second-type) boundary condition, which is a special case of Robin boundary conditions, we take $V=H^1(\Lambda)$ and
\begin{eqnarray*}
\mathbb{H}=\{v\in H^2(\Lambda) : \partial v/\partial \mathtt{v}_{ A}+\alpha_0v=0,\quad \text{on}\quad \partial \Lambda\}, \quad \alpha_0\in\mathbb{R},
\end{eqnarray*}
where $\partial v/\partial \mathtt{v}_{ A}$ is the normal derivative of $v$ and $\mathtt{v}_{ A}$ is the exterior pointing normal $n=(n_i)$ to the boundary of $\Lambda$ given by
\begin{eqnarray*}
\partial v/\partial\mathtt{v}_{A}=\sum_{i,j=1}^dn_i(x)D_{ij}(x)\dfrac{\partial v}{\partial x_j},\,\,\qquad x \in \partial \Lambda.
\end{eqnarray*}
Using the Green's formula and the boundary conditions, the  corresponding bilinear form associated to $-A$  is given by
\begin{eqnarray*}
a(u,v)=\int_{\Lambda}\left(\sum_{i,j=1}^dD_{ij}\dfrac{\partial u}{\partial x_i}\dfrac{\partial v}{\partial x_j}+\sum_{i=1}^dq_i\dfrac{\partial u}{\partial x_i}v\right)dx, \quad u,v\in V,
\end{eqnarray*}
for Dirichlet and Neumann boundary conditions, and  
\begin{eqnarray*}
a(u,v)=\int_{\Lambda}\left(\sum_{i,j=1}^dD_{ij}\dfrac{\partial u}{\partial x_i}\dfrac{\partial v}{\partial x_j}+\sum_{i=1}^dq_i\dfrac{\partial u}{\partial x_i}v\right)dx+\int_{\partial\Lambda}\alpha_0uvdx, \quad u,v\in V.
\end{eqnarray*}
for Robin boundary conditions. Using the G\aa rding's inequality (\cite{ATthesis}), it holds that there exist two positive constants $c_0$ and $\lambda_0$ such that
\begin{eqnarray}
\label{ellip1}
a(v,v)\geq \lambda_0\Vert v \Vert^2_{H^1(\Lambda)}-c_0\Vert v\Vert^2, \quad \forall v\in V.
\end{eqnarray}
By adding and substracting $c_0Xdt$ on the right hand side of \eqref{model}, we have a new linear operator
that we still call $A$ corresponding to the new bilinear form that we still call $a$ such that the following coercivity property holds
\begin{eqnarray}
\label{ellip2}
a(v,v)\geq \lambda_0\Vert v\Vert^2_1,\quad v\in V.
\end{eqnarray}
Note that the expression of the nonlinear term $F$ has changed as we included the term $-c_0X$ in the new nonlinear term that we still denote by  $F$. The coercivity property (\ref{ellip2}) implies that $A$ is sectorial on $L^{2}(\Lambda)$, i.e.  there exist $C_{1},\, \theta \in (\frac{1}{2}\pi,\pi)$ such that
\begin{eqnarray}
 \Vert (\lambda I -A )^{-1} \Vert_{L(L^{2}(\Lambda))} \leq \dfrac{C_{1}}{\vert \lambda \vert },\;\quad \quad
\lambda \in S_{\theta},
\end{eqnarray}
where $S_{\theta}:=\left\lbrace  \lambda \in \mathbb{C} :  \lambda=\rho e^{i \phi},\; \rho>0,\;0\leq \vert \phi\vert \leq \theta \right\rbrace $ (see \cite{Henry}).
 Then  $A$ is the infinitesimal generator of a bounded analytic semigroup $S(t):=e^{t A}$  on $L^{2}(\Lambda)$  such that
\begin{eqnarray}
S(t):= e^{t A}=\dfrac{1}{2 \pi i}\int_{\mathcal{C}} e^{ t\lambda}(\lambda I - A)^{-1}d \lambda,\;\;\;\;\;\;\;
\;t>0,
\end{eqnarray}
where $\mathcal{C}$  denotes a path that surrounds the spectrum of $A $.
The coercivity  property \eqref{ellip2} also implies that $-A$ is a positive operator and its fractional powers are well defined  for any $\alpha>0,$ by
\begin{equation}
\label{fractional}
 \left\{\begin{array}{rcl}
         (-A)^{-\alpha} & =& \frac{1}{\Gamma(\alpha)}\displaystyle\int_0^\infty  t^{\alpha-1}{\rm e}^{tA}dt,\\
         (-A)^{\alpha} & = & ((-A)^{-\alpha})^{-1},
        \end{array}\right.
\end{equation}
where $\Gamma(\alpha)$ is the Gamma function (see \cite{Henry}).
Let's now turn to the discretization of our problem \eqref{model}.  We start by  splitting  the domain $\Lambda$ in finite triangles.
Let $\mathcal{T}_h$ be the triangulation with maximal length $h$ satisfying the usual regularity assumptions, and  $V_h \subset V$ the space of continuous functions that are 
piecewise linear over the triangulation $\mathcal{T}_h$. 
We consider the projection $P_h$ from $H=L^2(\Lambda)$ to $V_h$ defined for every $u\in H$ by 
\begin{eqnarray}
\label{projection}
\langle P_hu,\chi\rangle_H=\langle u,\chi\rangle_H, \quad \forall \chi\in V_h.
\end{eqnarray}
The discrete operator $A_h : V_h\longrightarrow V_h$ is defined by 
\begin{eqnarray}
\label{discreteoperator}
\langle A_h\phi,\chi\rangle_H=\langle A\phi,\chi\rangle_H=-a(\phi,\chi),\quad \forall \phi,\chi\in V_h,
\end{eqnarray}
Like $A$, $A_h$ is also a generator of a semigroup $S_h(t) : =e^{tA_h}$. 
As any semigroup and its generator, $A_h$ and $S_h(t)$ satisfy the smoothing properties of Proposition  \ref{theorem1}  with a uniform constant $C$, independent of $h$. 
Following \cite{Larsson1,Larsson2,Antonio2,Suzuki}, we characterize the domain of the operator $(-A)^{k/2},\, 1 \leq k\leq 2$ as follow 
\begin{eqnarray*}
\mathcal{D}((-A)^{k/2})=\mathbb{H}\cap H^{k}(\Lambda), \quad \text{ (for Dirichlet boundary conditions)},\\
\mathcal{D}(-A)=\mathbb{H}, \quad \mathcal{D}((-A)^{1/2})=H^1(\Lambda), \quad \text{(for Robin boundary conditions)}.
\end{eqnarray*}  
The semi-discrete  version of problem \eqref{model} consists to find $X^h(t)\in V_h$,  $t\in(0,T]$ such that $X^h(0)=P_hX_0$ and
\begin{eqnarray}
\label{semi1}
dX^h(t)=[A_hX^h(t)+P_hF(X^h(t))]dt+P_hB(X^h(t))dW(t), \quad t\in(0,T].
\end{eqnarray}
We note that $A_h$ and $P_hF$ satisfy the same assumptions as $A$ and $F$ respectively. We also note that $P_hB$ satisfies Assumption \ref{assumption4}. 
Therefore, Theorem \ref{theorem2} ensures the existence of the unique mild solution  $X^h(t)$ of \eqref{semi1} such that 
\begin{eqnarray}
\label{regular1}
\Vert X^h(t)\Vert\leq C(1+\Vert P_hX_0\Vert)\leq C(1+\Vert X_0\Vert), \quad \forall t\in[0,T].
\end{eqnarray}
The mild solution of \eqref{semi1} can be represented as follows 
\begin{eqnarray}
\label{mild2}
X^h(t)&=&S_h(t)X^h(0)+\int_0^tS_h(t-s)P_hF(X^h(s))ds\nonumber\\
&+&\int_0^tS_h(t-s)P_hB(X^h(s))dW(s).
\end{eqnarray}
The following lemma will be useful in our convergence analysis.
\begin{lemma}
\label{lemma1}
The following estimate holds 
\begin{eqnarray*}
\Vert (-A_h)^{\alpha}P_hv\Vert\leq C\Vert (-A)^{\alpha}v\Vert, \quad \forall\,\,\, 0\leq \alpha\leq \frac{1}{2}, \quad \forall\, v\in \mathcal{D}((-A)^{\alpha}).
\end{eqnarray*}
\end{lemma}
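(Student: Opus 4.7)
\medskip

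\noindent\textbf{Proof plan.} The strategy is to verify the inequality at the two endpoints $\alpha=0$ and $\alpha=1/2$, and then obtain all intermediate exponents by interpolation between fractional powers of sectorial operators.

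\emph{Endpoint $\alpha=0$.} Since $P_h$ is the orthogonal $L^{2}$-projection defined by \eqref{projection}, the bound $\Vert P_h X\Vert\le\Vert X\Vert$ is immediate, which handles the case $\alpha=0$ trivially.

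\emph{Endpoint $\alpha=1/2$.} Here I would work with the bilinear form. Using the definition \eqref{discreteoperator} together with self-adjointness of $(-A_h)^{1/2}$ on $V_h$ in the $L^2$-inner product (after possibly replacing $A_h$ by its symmetric part, which is admissible under \eqref{ellip2}), I write
\begin{eqnarray*}
\Vert (-A_h)^{1/2}P_h X\Vert^{2}
= \langle -A_h P_h X, P_h X\rangle_H
= a(P_h X,P_h X).
\end{eqnarray*}
Continuity of $a$ on $V\times V$ gives $a(P_h X,P_h X)\le M\Vert P_h X\Vert_1^{2}$. Invoking the $H^{1}$-stability of the $L^{2}$-projection under the usual regularity assumption on $\mathcal{T}_h$, one has $\Vert P_h X\Vert_1\le C\Vert X\Vert_1$. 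Finally the coercivity \eqref{ellip2} yields $\Vert X\Vert_1^{2}\le \lambda_0^{-1}a(X,X)=\lambda_0^{-1}\Vert(-A)^{1/2}X\Vert^{2}$, and combining these three estimates proves the case $\alpha=1/2$ with a constant independent of $h$.

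\emph{Interpolation in $\alpha$.} To pass from the two endpoints to any $\alpha\in(0,1/2)$, I would appeal to the fact that $-A_h$ and $-A$ are both sectorial with sectoriality angle uniform in $h$ (this is guaranteed by \eqref{ellip2}, the resolvent estimate displayed just after, and the analogous estimate for $A_h$ with a uniform constant). For sectorial operators, the fractional powers defined by \eqref{fractional} form interpolation scales: the map $X\mapsto(-A_h)^{\alpha}P_h(-A)^{-\alpha}Z$, with $Z=(-A)^{\alpha}X$, extends to a bounded operator on $H$ whose norm is controlled, by interpolation between the endpoint cases, by $C^{1-2\alpha}\cdot C^{2\alpha}=C$ uniformly in $h$. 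Writing out this bound on $Z$ gives exactly the claim.

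\emph{Main obstacle.} The delicate point is obtaining the endpoint at $\alpha=1/2$ with a constant independent of $h$ in the \emph{non-selfadjoint} setting: the operator $A$ here contains a first-order advection term $\mathbf{q}\cdot\nabla$, so $(-A_h)^{1/2}$ is not the symmetric square root and $\Vert(-A_h)^{1/2}v\Vert^{2}\ne a(v,v)$ in general. I would handle this by splitting $-A$ into its symmetric and antisymmetric parts; the symmetric part generates an equivalent norm on $V$ thanks to \eqref{ellip2}, and the antisymmetric part is an $H^{1}$-bounded perturbation whose effect on fractional powers is controlled via a Kato-type perturbation argument for sectorial operators. The remaining ingredient, the uniform $H^{1}$-stability of $P_h$, is standard under the regularity of $\mathcal{T}_h$ already assumed.
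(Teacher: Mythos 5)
Your proposal follows the same skeleton as the paper's proof: the case $\alpha=0$ is trivial, the case $\alpha=1/2$ is the real content, and intermediate exponents follow by interpolation of fractional powers. The difference is in how the endpoint $\alpha=1/2$ is handled. The paper does not use the bilinear form identity at all: it cites the uniform (in $h$) norm equivalence $\Vert(-A_h)^{1/2}P_hv\Vert\leq C\Vert P_hv\Vert_{H^1(\Lambda)}$ from Larsson, and then spends the bulk of the proof establishing the $H^1$-stability of $P_h$ by arguing that $P_h$ commutes with weak derivatives, so that $\Vert P_hv\Vert_{H^1}\leq C\Vert v\Vert_{H^1}\leq C\Vert(-A)^{1/2}v\Vert$. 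You instead take the $H^1$-stability of $P_h$ as a standard cited fact (which is the more defensible choice, since the commutation $\partial_i(P_hv)=P_h\partial_iv$ claimed in the paper is not actually valid for the $L^2$-projection onto piecewise linears) and derive the norm equivalence from the identity $\Vert(-A_h)^{1/2}P_hX\Vert^2=a(P_hX,P_hX)$. Be aware that this identity, and likewise $a(X,X)=\Vert(-A)^{1/2}X\Vert^2$, holds only when $A_h$ and $A$ are self-adjoint; here $A$ contains the advection term $\mathbf{q}\cdot\nabla$, so what you actually need is the two-sided equivalence $\Vert(-A_h)^{1/2}v_h\Vert\simeq\Vert v_h\Vert_{H^1}$ and $\Vert(-A)^{1/2}v\Vert\simeq\Vert v\Vert_{H^1}$ with constants uniform in $h$ for a non-self-adjoint sectorial operator. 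You correctly flag this as the main obstacle, but the ``Kato-type perturbation argument'' you sketch is precisely the nontrivial content of the estimate the paper imports from Larsson (and of the characterization $\mathcal{D}((-A)^{1/2})=H^1$ stated in Section 2.2); if you make that citation explicit rather than re-deriving it, your argument closes and is, if anything, cleaner than the paper's.
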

\begin{proof}
From the equivalence of norms (see \cite[(3.12)]{Larsson2}) we have 
\begin{eqnarray}
\label{inter1a}
\Vert (-A_h)^{1/2}P_h v\Vert\leq C\Vert P_hv\Vert_{H^1(\Lambda)}, \quad v\in H^1(\Lambda).
\end{eqnarray}
Note that
\begin{eqnarray}
\label{weak1}
\Vert P_h v\Vert^2_{H^1(\Lambda)}&=&\Vert P_h v\Vert^2_{L^2(\Lambda)}+\sum_{i=1}^d\left\Vert\frac{\partial (P_hv)}{\partial x_i}\right\Vert^2_{L^2(\Lambda)},
\end{eqnarray}
where $\frac{\partial}{\partial x_i}$ stands for the weak derivative. Let $\mathcal{D}(\Lambda)$ be the set of  functions $\varphi\in C^{\infty}(\Lambda)$ with compact support in $\Lambda$.
Let $ v \in L^2(\Lambda)$, for all $\varphi\in \mathcal{D}(\Lambda)$, we have
\begin{eqnarray}
\label{weak2}
\left\langle \frac{\partial (P_h v)}{\partial x_i},\varphi \right\rangle =-\left\langle P_h v, \frac{\partial \varphi}{\partial x_i}\right\rangle
=-\left\langle v, P_h^*\frac{\partial \varphi}{\partial x_i}\right\rangle,
\end{eqnarray}
where $\langle .,.\rangle$ is a duality pairing between $\mathcal{D}'(\Lambda)$ and $\mathcal{D}(\Lambda)$, and  $\dfrac{\partial \varphi}{\partial x_i}$ is the  derivative of $\varphi$ in the classical sense.
From \cite[Remark 2.1]{Antjd1} we have $$P_h^*\dfrac{\partial \varphi}{\partial x_i}=\dfrac{\partial (P_h^*\varphi)}{\partial x_i},$$ since $P_h^*$ is a linear operator.
So from  the equality  \eqref{weak2} it holds that
\begin{eqnarray}
\label{weak3}
\left\langle \frac{\partial (P_h v)}{\partial x_i},\varphi \right\rangle =-\left\langle v, \frac{\partial (P_h^*\varphi)}{\partial x_i}\right\rangle
=\left\langle \frac{\partial v}{\partial x_i}, P_h^*\varphi\right\rangle=\left\langle P_h\frac{\partial v}{\partial x_i}, \varphi\right\rangle.
\end{eqnarray}
Since \eqref{weak3} holds for all $\varphi\in \mathcal{D}(\Lambda)$, it follows that
\begin{eqnarray}
\dfrac{\partial (P_hv)}{\partial x_i}=P_h\dfrac{\partial v}{\partial x_i} \quad 
\text{in the weak sense}.
\end{eqnarray}
 Inserting this latter relation in \eqref{weak1}, using the fact that the projection $P_h$ 
is bounded with respect to the norm $\Vert .\Vert_{L^2(\Lambda)}$ and  again the equivalence of norm \cite[(3.12)]{Larsson2} yields 
\begin{eqnarray}
\label{weak4}
\Vert P_h v\Vert^2_{H^1(\Lambda)}&=&\Vert P_h v\Vert^2_{L^2(\Lambda)}+\sum_{i=1}^d\left\Vert P_h\frac{\partial v}{\partial x_i}\right\Vert^2_{L^2(\Lambda)}\nonumber\\
&=&\Vert v\Vert^2_{L^2(\Lambda)}+\sum_{i=1}^d\left\Vert \frac{\partial v}{\partial x_i}\right\Vert^2_{L^2(\Lambda)}\nonumber\\
&=&\Vert v\Vert^2_{H^1(\Lambda)}\leq C\Vert (-A)^{1/2}v\Vert.
\end{eqnarray}
 We therefore  have 
\begin{eqnarray}
\label{inter1}
\Vert (-A_h)^{1/2}P_h v\Vert\leq C\Vert (-A)^{1/2}v\Vert.
\end{eqnarray}
Note that \eqref{inter1} remains true if we replace $\frac{1}{2}$ by $0$. By interpolation theory we have
\begin{eqnarray}
\label{inter3}
\Vert (-A_h)^{\alpha}P_h v\Vert\leq C\Vert (-A)^{\alpha}v\Vert, \quad \forall \,\, 0\leq \alpha\leq \frac{1}{2}, \quad \forall v\in \mathcal{D}((-A)^{\alpha}).
\end{eqnarray}
\end{proof}
Let us recall the following well known lemma.
\begin{lemma}\textbf{[It\^{o} isometry]}\cite[Proposition 2.3.5]{Prevot}\\
For any  $t\in[0,T]$ and for any $L^0_2$-valued predictable process $\phi(s)$, $s\in[0,t]$ the following equality holds
\begin{eqnarray*}
\mathbb{E}\left[\left\Vert\int_0^t\phi(s)dW(s)\right\Vert^2\right]=
\mathbb{E}\left[\int_0^t\Vert\phi(s)
\Vert^2_{L^0_2}ds\right].
\end{eqnarray*}
\end{lemma}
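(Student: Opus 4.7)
The plan is to follow the standard construction of the Hilbert-space-valued stochastic integral, first establishing the identity on elementary (simple) processes and then extending by a density argument. Concretely, I would first define the class of elementary $L_2^0$-valued predictable processes, namely those of the form
\begin{eqnarray*}
\phi(s)=\sum_{k=0}^{N-1}\phi_k\,\mathbf{1}_{(t_k,t_{k+1}]}(s),
\end{eqnarray*}
where $0=t_0<t_1<\cdots<t_N=t$ and each $\phi_k$ is an $\mathcal{F}_{t_k}$-measurable $L_2^0$-valued random variable taking finitely many values. For such $\phi$, the integral against $W$ is declared to be
\begin{eqnarray*}
\int_0^t\phi(s)\,dW(s)=\sum_{k=0}^{N-1}\phi_k\bigl(W(t_{k+1})-W(t_k)\bigr).
\end{eqnarray*}

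Next I would compute $\mathbb{E}\|\int_0^t\phi(s)\,dW(s)\|^2$ directly on this class. Expanding the square and using the tower property of conditional expectation, the cross terms $\mathbb{E}\langle \phi_j\Delta W_j,\phi_k\Delta W_k\rangle$ with $j<k$ vanish because $\phi_k$ and $\Delta W_j$ are $\mathcal{F}_{t_k}$-measurable while $\mathbb{E}[\Delta W_k\mid\mathcal{F}_{t_k}]=0$. For the diagonal terms I expand $W$ in the basis $(e_i)$ from \eqref{noise}, so that $\Delta W_k=\sum_i\sqrt{q_i}\,\Delta\beta_i^k\,e_i$, giving
\begin{eqnarray*}
\mathbb{E}\bigl[\|\phi_k\Delta W_k\|^2\,\big|\,\mathcal{F}_{t_k}\bigr]
=\sum_{i,j}\sqrt{q_iq_j}\,\mathbb{E}[\Delta\beta_i^k\Delta\beta_j^k]\,\langle\phi_k e_i,\phi_k e_j\rangle
=(t_{k+1}-t_k)\sum_i q_i\|\phi_k e_i\|^2,
\end{eqnarray*}
which is exactly $(t_{k+1}-t_k)\|\phi_k\|_{L_2^0}^2$ by the definition of the Hilbert--Schmidt norm recalled in the paper. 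Summing over $k$ and taking expectations yields the isometry on elementary processes.

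Finally I would extend the identity to a general predictable $L_2^0$-valued process $\phi$ with $\mathbb{E}\int_0^t\|\phi(s)\|_{L_2^0}^2\,ds<\infty$ by a density argument: approximate $\phi$ in the norm of $L^2(\Omega\times[0,t];L_2^0)$ by a sequence of elementary processes $\phi^{(n)}$, observe from the isometry on elementary processes that $\int_0^t\phi^{(n)}\,dW$ is Cauchy in $L^2(\Omega;H)$, define $\int_0^t\phi\,dW$ as its limit, and pass to the limit in the already established equality. The main technical obstacle in this plan is the second step: one must be careful that the basis-dependent calculation of $\mathbb{E}\|\phi_k\Delta W_k\|^2$ really produces the basis-free Hilbert--Schmidt norm $\|\phi_k\|_{L_2^0}^2=\|\phi_k Q^{1/2}\|_{HS}^2$, which requires Fubini/Tonelli to interchange the sum over $i$ with the expectation and relies essentially on the summability $\sum_i q_i\|\phi_k e_i\|^2<\infty$ coming from $\phi_k\in L_2^0$.
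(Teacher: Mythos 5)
Your proof is correct, but note that the paper does not prove this lemma at all: it is quoted verbatim from \cite[Proposition 2.3.5]{Prevot}, and your argument (isometry on elementary processes via the vanishing of cross terms and the basis computation $\sum_i q_i\Vert\phi_k e_i\Vert^2=\Vert\phi_k Q^{1/2}\Vert_{HS}^2$, followed by extension by density) is precisely the standard construction used in that cited reference. So your proposal matches the intended proof; no further comparison is needed.
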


The following two lemmas provide space and time regularity results of the mild solution of the semi-discrete problem  \eqref{semi1}. 
These lemmas play an important role in our convergence analysis. More results on the regularity of the mild solution of  problem \eqref{model} can be found in \cite{Arnulf1,Stig1,Prato}. 
\begin{lemma}\textbf{[Space regularity of the mild solution $X^h(t)$]}\\
\label{lemma3}
Let  \assref{assumption1}, \assref{assumption2}, \assref{assumption3} and \assref{assumption4} be fulfilled with $\beta\in[0,1)$, and $p\geq 2$.
Then for all $t \in [0,T]$, $X^h(t)\in L^p(\Omega, \mathcal{D}((-A)^{\beta/2}))$.  Moreover, there exists a positive constant $C$ independent of $h$ such that
\begin{eqnarray}
\label{bruit1}
\Vert (-A_h)^{\beta/2}X^h(t)\Vert_{L^p(\Omega, H)}\leq C\left(1+\Vert (-A)^{\beta/2}X_0\Vert_{L^p(\Omega, H)}\right),\quad t\in[0,T].
\end{eqnarray}
Further, when dealing with additive noise ($B=\mathbf{I}$), if \assref{assumption6a} is fulfilled with $\beta\in[0,2)$, then for all $t\in[0,T]$ $X^h(t)\in L^p(\Omega,\mathcal{D}((-A)^{\beta/2}))$ and \eqref{bruit1} holds for $\beta\in[0,2)$.
\end{lemma}

\begin{proof}
The proof follows the sames lines as that of \cite[Lemma 2.6]{Antonio1} or \cite[Theorem 1]{Arnulf1} or \cite[Theorem 3.1]{Stig1} by making use of \lemref{lemma1}.
\end{proof}

\begin{lemma}\textbf{[Time regularity of the mild solution $X^h(t)$]}
\label{lemma4}
Let $X^h$ be the mild solution of \eqref{semi1}. If \assref{assumption1}, \assref{assumption2}, \assref{assumption3} and \assref{assumption4} are fulfilled  with the corresponding $0< \beta \leq 2$.
For $ 0< \beta < 1,$ there exists a positive constant $C$ independent of $h$ such that for  $t_1,t_2\in[0,T]$, $t_1<t_2$, we have 
\begin{eqnarray}
\Vert X^h(t_2)-X^h(t_1)\Vert_{L^p(\Omega,H)}\leq C(t_2-t_1)^{\beta/2}(1+\Vert (-A)^{\beta/2}X_0\Vert_{L^p(\Omega,H)}).
\end{eqnarray}
Moreover, if Assumption \ref{assumption5} is fulfilled with $1\leq \beta \leq 2$, then there exists a positive constant $C$ such that 
\begin{eqnarray}
\Vert X^h(t_2)-X^h(t_1)\Vert_{L^p(\Omega,H)}\leq C(t_2-t_1)^{1/2}(1+\Vert(-A)^{\beta/2} X_0\Vert_{L^p(\Omega,H)}).
\end{eqnarray}
For additive noise ($B=\mathbf{I}$), if \assref{assumption1}, \assref{assumption2}, \assref{assumption3} and \assref{assumption6a} are fulfilled, then the following time regularity holds
\begin{eqnarray}
\Vert X^h(t_2)-X^h(t_1)\Vert_{L^p(\Omega,H)}\leq C(t_2-t_1)^{\min(\beta,1)/2}(1+\Vert(-A)^{\beta/2} X_0\Vert_{L^p(\Omega,H)}).\nonumber
\end{eqnarray}
\end{lemma}

\begin{proof}
The proof follows the same lines as that of \cite[Lemma 2.7]{Antonio1} or \cite[Theorem 1]{Arnulf1} or \cite[Theorem 4.1]{Stig1} by making use of Lemma \ref{lemma1}.
\end{proof}
\subsection{Current stable and efficient schemes for semilinear SPDEs}
\label{motivation}
Recall  that the simple  efficient standard semi-implicit Euler-Maruyama scheme  for (\ref{model}) is given by (see e.g. \cite{Printems})
\begin{eqnarray}
  \label{standard}
 Z_{m+1}^{h}&=& \Shdt 
\left[(\mathbf{I}+\Delta t(1-\theta)A_h) Z_{m}^{h} +\Delta t \,
P_{h}F(Z_{m}^{h})+P_{h} B(Z^h_m) \Delta W_{m}\right], \quad \\  \,\,\,\Shdt&=&\left(\mathbf{I}-\theta \Delta t  A_h \right)^{-1},\,\,\,\, \theta \in [0, 1].
\end{eqnarray}
The exponential integrators schemes developed in  \cite{Antonio1,ATthesis} are given by
\begin{eqnarray}
  \label{standard1}
 K_{m+1}^{h}&=& S_h(\Delta t) 
\left(K_{m}^{h} +\Delta t \,
P_{h}F(K_{m}^{h})+P_{h}B(K^h_m) \Delta W_{m} \right),\quad
\end{eqnarray}
and 
\begin{eqnarray}
  \label{standard2}
 L_{m+1}^{h}&=& S_h(\Delta t)
\left(L_{m}^{h}  \,
+P_{h} B(L^h_m) \Delta W_{m} \right)+ \Delta t \varphi_1( \Delta t A_h) P_{h}F(L_{m}^{h}),\\
\Delta W_{m}&:=& W_{t_{m+1}}-W_{t_{m}}=\sqrt{\Delta t} \underset{i \in \mathbb{N}}{\sum}\sqrt{q_{i}} R_{i,m}e_{i},\quad \nonumber
\end{eqnarray}
where  $R_{i,m}$ are independent, standard normally distributed random
variables with mean $0$ and variance $1$, and  the function $\varphi_1$ is defined in \eqref{phi}.  Note that all the initial values in all  the schemes  are taken  to be
$P_hX_0$ and scheme \eqref{standard} is more stable for $\theta  \geq 1/2$.
If  the linear operator $A$ tends to the operator null\footnote{Think about for example  a multiple of Laplace operator $A=\alpha \varDelta $,  when $\alpha \rightarrow 0$}, 
the corresponding discrete operator $A_h$ tends also to null, $S_h(\Delta t)$, $\Shdt$ and $\varphi_1( \Delta t A_h)$ tend to the identical operator $\mathbf{I}$. 
Therefore  the numerical schemes \eqref{standard}, \eqref{standard1} and \eqref{standard2} become the  unstable explicit   Euler-Maruyama scheme.

\subsection{Novel fully discrete scheme}
\label{fullydiscrete} 
Let us  build a more stable scheme, robust when the operator $A$ tends to null.
For the time discretization, we consider the one-step method which  provides the numerical approximated solution $X^h_m$ of $X^h(t_m)$ at discrete time $t_m=m\Delta t$, $m=0, \cdots, M$.
The method is based on the continuous linearization of \eqref{semi1}.
More precisely we linearize  \eqref{semi1}  at each time step as 
\begin{eqnarray}
\label{semi}
dX^h(t)&=&[A_hX^h(t)+J_m^hX^h(t)+G^h_m(X^h(t))]dt+P_hB(X^h(t))dW(t), 
\end{eqnarray}
for all $t_m\leq t\leq t_{m+1}$,
where $J_m^h$ is the Fr\'{e}chet derivative of $P_hF$ at $X^h_m$ and $G^h_m$ is  the remainder at $X^h_m$.
Both $J^h_m$ and $G^h_m$ are random functions and are  defined for all $\omega\in \Omega$ by
\begin{eqnarray}
\label{remainder1}
J^h_m(\omega) &:=&(P_hF)'(X^h_m(\omega))=P_hF'(X^h_m(\omega)),\\
\label{remainder2}
 G^h_m(\omega)(X^h(t)) &:=&P_hF(X^h(t))-J_m^h(\omega)X^h(t).
\end{eqnarray}
Before building  the new  numerical scheme, let us recall the following important lemma.
\begin{lemma}
\label{lemma5} 
 For all $m\in\mathbb{N}$ and all $\omega\in\Omega$, the random linear operator
$A_h+J^h_m(\omega)$ is the  generator of a strongly continuous semigroup $S^h_m(\omega)(t):=e^{(A_h+J^h_m(\omega))t}$ called  random (or stochastic) perturbed semigroup and uniformly bounded on $[0,T]$,
i.e. there exists a positive constant $C_1$ independent of $h$, $m$, $\Delta t$ and  the sample $\omega$ such that
\begin{eqnarray*}
\left\Vert e^{(A_h+J^h_m(\omega)) t}\right\Vert_{L(H)}\leq C_1, \quad  \;0\leq t\leq T.
\end{eqnarray*}
\end{lemma}  
\begin{proof}
  Using the boundedness of $P_h$ and \assref{assumption3}, it holds that
 \begin{eqnarray}
 \label{espoir1}
 \Vert J^h_m(\omega)\Vert_{L(H)} \leq   \Vert F'(X^h_m(\omega))\Vert_{L(H)} < C,\quad m\in\mathbb{N},\quad \omega\in\Omega.
 \end{eqnarray}
   Therefore $J^h_m(\omega)$ is a bounded linear operator. It follows then from  \cite[Theorem 1.1, Chapter 3, Page 76]{Pazy} that $A_h+J^h_m(\omega)$ is
   a generator of a strongly continuous semigroup denoted by $S^h_m(\omega)(t)=e^{(A_h+J^h_m(\omega))t}$.
   Since $A_h$ is a generator of an analytic semigroup  $S_h(t)=e^{A_ht}$, there exist two constants $K\geq 0$ and $C_0\in\mathbb{R}$ such that
 \begin{eqnarray}
 \label{espoir2}
 \Vert e^{A_h\,t}\Vert_{L(H)}\leq Ke^{C_0\,t}, \quad t\geq 0.
 \end{eqnarray}
    Finally using \eqref{espoir1} and \eqref{espoir2} it holds by applying again \cite[Theorem 1.1, Chapter 3, Page 76]{Pazy}) that
   \begin{eqnarray}
   \left\Vert e^{(A_h+J^h_m(\omega)) t}\right\Vert_{L(H)}&\leq& Ke^{\left(C_0+\Vert J^h_m(\omega)\Vert_{L(H)}\right)t}\nonumber\\
   &\leq& Ke^{(C_0+C)t}\leq C_1,\quad\quad\quad t\in[0,T],
   \end{eqnarray}
  where $C_1$ is a positive constant, independent of $h$, $m$, $\omega$ and $\Delta t$.  This complete the proof of \lemref{lemma5}.
\end{proof}

Given the solution $X^h(t_m)$ and the numerical solution $X^h_m$ at  $t_m$,  we obtain from \eqref{semi} the following mild representation form of $X^h(t_{m+1})$
\begin{eqnarray}
\label{semi2}
X^h(t_{m+1})&=&e^{(A_h+J^{h}_m)\Delta t}X^h(t_m)+\int_{t_m}^{t_{m +1}}e^{(A_h+J^{h}_m)(t_{m+1}-s)}G^h_{m}(X^h(s))ds\nonumber\\
&+&\int_{t_m}^{t_{m+1}}e^{(A_h+J^h_m)(t_{m+1}-s)}P_hB(X^h(s))dW(s).
\end{eqnarray}
We note that  \eqref{semi2} is the exact solution of \eqref{semi1} at $t_{m+1}$.
To establish our  numerical method we use the following approximations 
\begin{eqnarray}
\label{nna}
G^h_m(X^h(t_m+s))\approx G^h_m(X^h_m),\\
\label{napp}
 e^{(A_h+J^h_m)(t_{m+1}-s)}P_hB(X(s))\approx e^{(A_h+J^h_m)\Delta t}P_hB(X^h_m).
\end{eqnarray}
Therefore the deterministic  integral part of \eqref{semi2} can be approximated as follows 
\begin{eqnarray}
\label{constr1}
&&\int_{t_m}^{t_{m+1}}e^{(A_h+J^h_m)(t_{m+1}-s)}G^h_m(X^h(s))ds\nonumber\\
&=&\int_{0}^{\Delta t}e^{(A_h+J^h_m)(\Delta t-s)}G^h_m(X^h(t_m+s))ds\nonumber\\
&\approx& G^h_m(X^h_m)(A_h+J^h_m)^{-1}(e^{(A_h+J^h_m)\Delta t}-\mathbf{I}).
\end{eqnarray}
Inserting \eqref{constr1} and \eqref{napp} in \eqref{semi2} and using the approximation $X^h(t_m)\approx X^h_m$ give the following approximation $X^h_{m+1}$ of $X^h(t_{m+1})$, 
called Stochastic Exponential Rosenbrock Scheme (SERS)
\begin{eqnarray}
\label{erem}
X^h_{m+1}&=& e^{(A_h+J^h_m)\Delta t}X^h_m+(A_h+J^h_m)^{-1}(e^{(A_h+J^h_m)\Delta t}-\mathbf{I})G^h_m(X^h_m)\nonumber\\
&+&e^{(A_h+J^h_m)\Delta t}P_hB(X^h_m)(W_{t_{m+1}}-W_{t_m}),
\end{eqnarray}
with $X^h_0 : =X^h(0)=P_hX_0$. 
The  numerical scheme \eqref{erem} can be rewritten in the following equivalent form, which is efficient for implementation
\begin{eqnarray*}
X^h_{m+1}&=&X^h_m+P_hB(X^h_m)\Delta W_m\nonumber\\
&+&\varphi_1(\Delta t(A_h+J^h_m))\left[(A_h+J^h_m)(X^h_m+P_hB(X^h_m)\Delta W_m)+G^h_m(X^h_m)\right],
\end{eqnarray*}
where 
\begin{eqnarray}
\label{phi}
\varphi_1(\Delta t(A_h+J^h_m))&:=&(A_h+J^h_m)^{-1}(e^{\Delta t(A_h+J^h_m)}-\mathbf{I})\nonumber\\
&=&\int_0^{\Delta t}e^{(\Delta t-s)(A_h+J^h_m)}ds.
\end{eqnarray}
Note that the operator $\varphi_1(\Delta t(A_h+J^h_m(\omega)))$ is uniformly bounded (independently of $h$, $m$ and $\omega$), see e.g. \cite[Lemma 2.4]{Alex1}.

\begin{remark}
 Note that  the corresponding standard stochastic exponential  scheme  \eqref{standard2} presented  in \cite{Antonio1} can be written as
 \begin{eqnarray}
 \label{setd1}
L^h_{m+1}&=&L^h_m+P_hB(L^h_m)\Delta W_m\nonumber\\
&+&\varphi_1(\Delta tA_h)\left[A_h \left( L^h_m+P_hB(L^h_m)\Delta W_m\right)+P_hF(L^h_m)\right].
\end{eqnarray}
This scheme will be called SETD1 and will  be used in our numerical simulations for comparison with SERS scheme.
\end{remark}
\begin{remark}
 If  the deterministic part  is  also approximated as the diffusion part \eqref{napp}, we will obtain the following new scheme
 \begin{eqnarray}
 \label{schme0}
U^h_{m+1}=e^{(A_h+J^h_m)\Delta t} \left[ U^h_m+ P_hB(U^h_m)\Delta W_m+G^h_m(U^h_m))\right].
\end{eqnarray}
Our main result is also valid for scheme \eqref{schme0} and the extension of our proof  to that scheme is done as in \cite{Antonio1} without any issue.
\end{remark}

Having the numerical method  \eqref{erem}   in hand, our goal is to analyze its strong convergence toward the exact solution in the root-mean-square $L^2$ sense.
In the following subsection we state our strong convergence results, which are in fact our main results.

\subsection{Main results}
\label{strongresult}
Throughout this paper we take $t_m=m\Delta t\in[0,T]$, where $T=M\Delta t$ for $m, M\in\mathbb{N}$, $m\leq M$, $T$ is fixed, $C$ is a generic constant that may change from one place to another and $\epsilon>0$ is a positive constant small enough.
The main results of this paper are formulated in the following theorems.  For multiplicative noise  we have the following result.
\begin{theorem}
\label{mainresult1}
Let $X(t_m)$ and  $X^h_m$ be respectively the mild solution \eqref{mild1} and  the numerical approximation given by \eqref{erem} at $t_m=m\Delta t$. 
Let \assref{assumption1}, \assref{assumption2} (with $0<\beta< 1$ and $p=2$), \assref{assumption3} and \assref{assumption4} 
be fulfilled. Then the following error  estimate holds
\begin{eqnarray*}
(\mathbb{E}\Vert X(t_m)-X^h_m\Vert^2)^{1/2}\leq C\left(h^{\beta}+\Delta t^{\beta/2}\right).
\end{eqnarray*}
Moreover, under a strong  regularity of the initial data, that is  \assref{assumption2} (with $p=2$) and \assref{assumption5} are fulfilled with $\beta\in[1,2)$ and $\gamma=\beta-1$, the following error estimate holds
\begin{eqnarray*}
(\mathbb{E}\Vert X(t_m)-X^h_m\Vert^2)^{1/2}\leq C\left(h^{\beta}+\Delta t^{1/2}\right).
\end{eqnarray*}
If $\beta=2$ and \assref{assumption2} (with $p=2$) and \assref{assumption5} are fulfilled with $\gamma=1$, then the following error estimate holds
\begin{eqnarray*}
(\mathbb{E}\Vert X(t_m)-X^h_m\Vert^2)^{1/2}\leq C\left(h^{2}(1+\max(0,\ln(t_m/h^2)))+\Delta t^{1/2}\right).
\end{eqnarray*}
\end{theorem}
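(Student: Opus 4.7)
\medskip

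\noindent\textbf{Proof plan for Theorem \ref{mainresult1}.}
The plan is to split the total error through the semi-discrete solution as
\[
X(t_m)-X^h_m \;=\; \bigl(X(t_m)-X^h(t_m)\bigr) \;+\; \bigl(X^h(t_m)-X^h_m\bigr),
\]
so that the two summands can be estimated independently in $L^2(\Omega,H)$. The first (purely spatial) term $X(t_m)-X^h(t_m)$ is independent of the time scheme and so can be controlled by the standard finite element error bound of order $h^\beta$ (respectively $h^{1+\gamma}$ up to logarithmic factors when $\beta\ge 1$), exactly as already established for SPDEs of the type \eqref{model} in \cite{Antonio1, Arnulf1}; I would just invoke that result, since Assumptions \ref{assumption1}--\ref{assumption4} and, where needed, \ref{assumption5} are in force, and the proofs carry over verbatim because they only rely on Proposition \ref{prop1}, Lemma \ref{lemma1} and the regularity Lemmas \ref{lemma3}--\ref{lemma4}.

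For the temporal part $e^h_m := X^h(t_m)-X^h_m$, I would start from the exact mild representation \eqref{semi2} on each subinterval and from the SERS update \eqref{erem}, and subtract them. This produces a natural three-term decomposition: a ``drift-linearisation'' error
\[
\int_{t_m}^{t_{m+1}} e^{(A_h+J^h_m)(t_{m+1}-s)}\bigl(G^h_m(X^h(s))-G^h_m(X^h_m)\bigr)\,ds,
\]
a ``diffusion-freezing'' stochastic error
\[
\int_{t_m}^{t_{m+1}} \bigl(e^{(A_h+J^h_m)(t_{m+1}-s)}P_hB(X^h(s))-e^{(A_h+J^h_m)\Delta t}P_hB(X^h_m)\bigr)\,dW(s),
\]
and a propagated error $e^{(A_h+J^h_m)\Delta t}e^h_m$. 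Iterating the recursion down to $m=0$ (where $e^h_0=0$) writes $e^h_{m+1}$ as a telescoping sum involving the products of perturbed semigroups $e^{(A_h+J^h_m)\Delta t}\cdots e^{(A_h+J^h_k)\Delta t}$, which are uniformly bounded in $L(H)$ by Lemma \ref{lemma5}. Taking $L^2(\Omega,H)$-norms, estimating the deterministic sum directly and the stochastic sum by Itô's isometry, using the Lipschitz property of $G^h_m$ inherited from $F$ and $F'$ (Assumption \ref{assumption3}), the time regularity estimate of Lemma \ref{lemma4} on the increments $X^h(s)-X^h(t_m)$, and a discrete Gronwall argument, I expect to recover $(\mathbb E\|e^h_m\|^2)^{1/2}\le C\Delta t^{\beta/2}$ in the case $0<\beta<1$.

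The hard part is twofold. First, one cannot exploit smoothing from the perturbed semigroup $e^{(A_h+J^h_m)t}$ in the same ``$t^{-\alpha}$ of $(-A_h)^\alpha$'' form as in Proposition \ref{prop1}, because $A_h+J^h_m$ is $\omega$-dependent and not self-adjoint; the bound in Lemma \ref{lemma5} is only $L(H)$-boundedness. I would circumvent this by splitting
$e^{(A_h+J^h_m)t} = e^{A_h t} + \int_0^t e^{(A_h+J^h_m)(t-r)} J^h_m e^{A_h r}\,dr$,
so that all the fractional-power smoothing needed for the Hölder estimates in time is borne by the genuine semigroup $e^{A_h t}$ (to which Proposition \ref{prop1} applies uniformly in $h$), while the perturbation contribution is absorbed by the uniform bound $\|J^h_m\|_{L(H)}\le L$ from Assumption \ref{assumption3}; this is exactly the deterministic technique of \cite{Antjd1} which the authors already use.

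Second, for the regime $\beta\in[1,2)$ and the improved rate $\Delta t^{1/2}$, the spatial Hölder exponent of $X^h$ saturates at $1$ and one must use Assumption \ref{assumption5} to estimate $\|(-A_h)^{\gamma/2}P_hB(X^h(s))\|_{L_2^0}$ when replacing $X^h(s)$ by $X^h_m$ inside the stochastic integral and when bounding $\|(e^{(A_h+J^h_m)(t_{m+1}-s)}-e^{(A_h+J^h_m)\Delta t})P_hB(X^h_m)\|_{L_2^0}$. Here I would combine the splitting above with Lemma \ref{lemma1} to transfer $(-A_h)^{\gamma/2}P_h$ to $(-A)^{\gamma/2}$ acting on $X^h$, and then use Lemma \ref{lemma3} with the exponent $\beta$ (noting $\gamma\le\beta/10$). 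Putting these estimates together and again invoking discrete Gronwall yields $(\mathbb E\|e^h_m\|^2)^{1/2}\le C\Delta t^{1/2}$, and combining with the $h^\beta$ spatial bound closes both cases of the theorem.
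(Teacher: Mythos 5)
Your proposal follows essentially the same route as the paper: the same splitting into a spatial error (handled by citing the existing finite element bound, Lemma \ref{lemma7}) and a temporal error treated by iterating the mild recursion into a telescoping sum of perturbed-semigroup products, with the smoothing of $e^{(A_h+J^h_m)t}$ recovered through exactly the variation-of-parameters identity the paper uses to prove Lemmas \ref{lemma8} and \ref{lemma9}, and the $\beta\in[1,2)$ case closed via Assumption \ref{assumption5} and Lemma \ref{lemma1} followed by discrete Gronwall. The only cosmetic deviation is your bound of $\Vert(-A_h)^{\gamma/2}P_hB(\cdot)\Vert_{L^0_2}$ directly on $X^h$ rather than passing through the exact solution $X$ as in \eqref{case2f}, which is an equally valid variant.
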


As in \cite[Remark 2.9]{Antonio1}, strong assumptions on the nonlinear function $F$ can allow to achieve  a spatial error of order $\mathcal{O}(h^2).$
Note that \assref{assumption5} in \thmref{mainresult1} is key to obtain optimal order of convergence. 
The following remark provides  the error estimate without  \assref{assumption5}.
\begin{remark}
\label{reviewremark1}
If \assref{assumption1}, \assref{assumption2} (with $p=2$), \assref{assumption3} and \assref{assumption4} are fulfilled with $\beta\in[1,2]$, then the following error estimate holds 
\begin{eqnarray*}
(\mathbb{E}\Vert X(t_m)-X^h_m\Vert^2)^{1/2}\leq C\left(h^{1-\epsilon}+\Delta t^{1/2-\epsilon}\right),
\end{eqnarray*}
where $\epsilon$ is a positive constant small enough.
\end{remark}

For additive noise (that is $B=\mathbf{I}$), we have the following result. 
\begin{theorem}
\label{mainresult2}
Let $X(t_m)$ and  $X^h_m$ be respectively the mild solution \eqref{mild1} and  the numerical approximation given by \eqref{erem} at $t_m=m\Delta t$. 
For additive noise, if  \assref{assumption1}, \assref{assumption2} (with $0<\beta< 2$ and $p=4$), \assref{assumption3},  \assref{assumption6a} and \assref{assumption6b} are fulfilled,
then the following error estimate holds
\begin{eqnarray}
(\mathbb{E}\Vert X(t_m)-X^h_m\Vert^2)^{1/2}\leq C(h^{\beta}+\Delta t^{\beta/2-\epsilon}).
\end{eqnarray}
 Moreover if  $\beta=2$, the following error estimate holds
\begin{eqnarray}
(\mathbb{E}\Vert X(t_m)-X^h_m\Vert^2)^{1/2}\leq C\left(h^{2}(1+\max(0,\ln(t_m/h^2)))+\Delta t^{1-\epsilon}\right).
\end{eqnarray}
\end{theorem}

\begin{remark}
\label{remark2a}
For additive noise, we achieved  suboptimal order $\mathcal{O}(h^{2-\epsilon}+\Delta t^{1-\epsilon})$ for $\beta=2$, 
where  $\epsilon$ is a positive number, small enough. The suboptimal order $1-\epsilon$ in time was  achieved in \cite{Jentzen2}, 
where authors imposed a strong regularity on the drift function (namely \cite[Assumption 2]{Jentzen2}), but with less regular noise.
The recent works in  \cite{Xiaojie1,Xiaojie2} achieved optimal order $1$ in time with less restrictive assumptions than \cite[Assumption 2]{Jentzen2}.
Here we have  achieved suboptimal order $1-\epsilon$ in time with similar assumptions  as in \cite{Xiaojie1,Xiaojie2}.
Note  that the current work and the work in \cite{Xiaojie2} use standard  Brownian increments, while the works in \cite{Jentzen2,Xiaojie1} 
use the linear functionals of the noise to achieve optimal order with less regular noise.
\end{remark}

\begin{remark}
\label{remark3}
Note that the semi-discrete problem \eqref{semi1} can be replaced by the following semi-discrete problem where  the noise is truncated 
\begin{eqnarray}
\label{semi3}
dX^h(t)=[A_hX^h(t)+P_hF(X^h(t))]dt+P_hB(X^h(t))P_hdW(t), t\in[0,T].
\end{eqnarray}
It was shown in \cite{Kovac1} that in the case of additive noise with smooth covariance operator kernel,
this truncation can be done severely without loosing the  spatial accuracy of the finite element method. 
Applying our stochastic exponential Rosenbrock scheme to \eqref{semi3} yields
\begin{eqnarray}
\label{serem2}
Y^h_{m+1}&=&e^{(A_h+J^h_m)\Delta t}Y^h_m+(A_h+J^h_m)^{-1}\left(e^{(A_h+J^h_m)\Delta t}-\mathbf{I}\right)G^h_m(Y^h_m)\nonumber\\
&+& e^{(A_h+J^h_m)\Delta t}P_hB(Y^h_m)P_h(W_{t_{m+1}}-W_{t_m}).
\end{eqnarray}
We note that \thmref{mainresult1} and \thmref{mainresult2} also hold for the numerical scheme \eqref{serem2}. Parts of  \cite{Antonio2} can be used in the proof.
\end{remark}
\section{Proof of the main results}
\label{convergenceproof}
Before  prove our main results,   some preparatory results are needed.
\subsection{Preparatory results}
\label{preliminaries1}

\begin{lemma}
\label{lemma6}
The function $G^h_m(\omega)$ defined by \eqref{remainder2}  satisfies the global Lipschitz condition with a uniform constant, i.e. 
there exists a positive constant $C>0$, independent of $h$, $m$ and $\omega$ such that
\begin{eqnarray*}
\Vert G^h_m(\omega)(u^h)-G^h_m(\omega)(v^h)\Vert\leq C\Vert u^h-v^h\Vert, \quad \forall m\in\mathbb{N}, \quad \forall u^h,v^h\in V_h.
\end{eqnarray*}
\end{lemma}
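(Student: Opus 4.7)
The plan is to bound the difference directly from the definition of $G^h_m$. Writing
\[
G^h_m(\omega)(u^h)-G^h_m(\omega)(v^h) = \bigl[P_hF(u^h)-P_hF(v^h)\bigr] - J^h_m(\omega)(u^h-v^h),
\]
I would apply the triangle inequality and estimate the two pieces separately.

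For the first piece, I would use the global Lipschitz property of $F$ from \assref{assumption3} together with the fact that the $L^2$-orthogonal projection $P_h$ has operator norm at most one, giving
\[
\bigl\|P_hF(u^h)-P_hF(v^h)\bigr\| \leq \|F(u^h)-F(v^h)\| \leq C\|u^h-v^h\|,
\]
with $C$ independent of $h$.

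For the second piece, since $J^h_m(\omega)=(P_hF)'(X^h_m(\omega))=P_h F'(X^h_m(\omega))$ (differentiation commutes with the linear operator $P_h$), the uniform bound $\|F'(v)\|_{L(H)}\leq C$ from \assref{assumption3} combined with $\|P_h\|_{L(H)}\leq 1$ yields $\|J^h_m(\omega)\|_{L(H)}\leq C$ for all $\omega\in\Omega$, all $m\in\mathbb{N}$ and all $h$. Hence
\[
\|J^h_m(\omega)(u^h-v^h)\| \leq C\|u^h-v^h\|.
\]

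Adding the two estimates gives the claimed Lipschitz bound with constant $2C$, manifestly independent of $h$, $m$, and the sample $\omega$. There is no real obstacle here: the whole point is that \assref{assumption3} was crafted precisely to make both $F$ and its Fr\'echet derivative uniformly bounded in the relevant operator norms, so that the local linearization defining $G^h_m$ does not degrade the Lipschitz regularity. The only minor subtlety worth noting in the write-up is the identity $(P_hF)'(v)=P_h F'(v)$, which follows from the linearity and boundedness of $P_h$.
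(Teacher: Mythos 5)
Your proof is correct and follows exactly the route the paper intends: the paper's own proof is just the one-line remark that the result is ``straightforward'' from Assumption~\ref{assumption3} and the definitions \eqref{remainder1}--\eqref{remainder2}, and your write-up simply fills in that triangle-inequality argument with the boundedness of $P_h$ and the uniform bound on $F'$.
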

\begin{proof}
Using Assumption \ref{assumption3} and relations \eqref{remainder1}-\eqref{remainder2}, the proof is straightforward.
\end{proof}
We introduce the Riesz representation operator $R_h : V\longrightarrow V_h$ defined by
\begin{eqnarray}
\label{ritz}
\langle -AR_hv,\chi\rangle_H=\langle -Av,\chi\rangle_H=a(v,\chi),\quad \forall v\in V,\quad \forall \chi\in V_h.
\end{eqnarray}
It is well known (see \cite{Larsson2,Antonio1}) that $A$ and $A_h$ are related by   $A_hR_h=P_hA$. Under the 
regularity assumptions on the triangulation and in view of the $V$-ellipticity \eqref{ellip2}, it is well known (see \cite{Suzuki}) that for all $r\in\{1,2\}$ the following errors estimates hold 
\begin{eqnarray}
\label{ritz1}
\Vert R_hv-v\Vert+h\Vert R_hv-v\Vert_{H^1(\Omega)}\leq Ch^r\Vert v\Vert_{H^r(\Omega)},\quad v\in V\cap H^r(\Omega).
\end{eqnarray}

Let us consider the following deterministic linear problem :  find $u\in V$ such that 
\begin{eqnarray}
\dfrac{du}{dt}=Au,\quad u(0)=v,\quad t\in (0,T].
\end{eqnarray}
The corresponding semi-discrete problem in space consists  to find $u_h\in V_h$ such that 
\begin{eqnarray}
\dfrac{du_h}{dt}=A_hu_h,\quad u_h(0)=P_hv,\quad t\in (0,T].
\end{eqnarray}
Let us define the following operator 
\begin{eqnarray}
T_h(t):=S(t)-S_h(t)P_h=e^{At}-e^{A_ht}P_h,
\end{eqnarray}
so that $u(t)-u_h(t)=T_h(t)v$. The estimate \eqref{ritz1} was used in \cite{Antonio1} to prove  the key part of the following lemma.

\begin{lemma}  
\label{lemma6a}
The following estimate holds
\begin{eqnarray}
\label{addis1}
\Vert T_h(t)v\Vert \leq Ch^{r}t^{-(r-\alpha)/2}\Vert v\Vert_{\alpha},\quad r\in[0,2],\quad \alpha\leq r,\,\, t\in (0,T].
\end{eqnarray}
\end{lemma}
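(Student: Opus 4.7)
The plan is to combine the Ritz-projection error bound \eqref{ritz1} with the uniform smoothing properties of $S(t)$ and $S_h(t)$ listed in \propref{prop1} (which hold for $S_h$ with a constant independent of $h$, as noted after \eqref{discreteoperator}). The result is standard in the finite element analysis of parabolic problems; I would establish it at three ``vertex'' cases $(r,\alpha) \in \{(0,0),(2,2),(2,0)\}$ covering the corners of the admissible triangle $\{0 \leq \alpha \leq r \leq 2\}$, and then interpolate.

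First, the case $r=\alpha=0$ follows from the uniform boundedness of both semigroups: $\Vert T_h(t)v\Vert \leq \Vert S(t)v\Vert + \Vert S_h(t)P_hv\Vert \leq C\Vert v\Vert$. Second, for $r=\alpha=2$, I would use the decomposition $T_h(t)v = \rho(t) + \theta(t)$ with $\rho(t) := S(t)v - R_h S(t)v$ and $\theta(t) := R_h S(t)v - S_h(t)P_h v \in V_h$. The Ritz estimate \eqref{ritz1}, together with the commutation $A S(t) = S(t)A$, bounds $\Vert\rho(t)\Vert$ by $Ch^2\Vert v\Vert_2$. Differentiating $\theta$ and using the identity $A_hR_h = P_hA$ gives the evolution equation $\dot\theta - A_h\theta = (R_h - P_h)A S(t)v$ with $\theta(0) = (R_h - P_h)v$; Duhamel then expresses $\theta(t)$ as a sum of two terms to which \eqref{ritz1} and the uniform bound on $S_h$ yield $\Vert\theta(t)\Vert \leq Ch^2\Vert v\Vert_2$.

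The delicate off-diagonal vertex is $(r,\alpha)=(2,0)$. Here I would exploit the Dunford representation
\begin{eqnarray*}
T_h(t) = \frac{1}{2\pi i}\int_{\mathcal C} e^{\lambda t}\bigl[(\lambda I - A)^{-1} - (\lambda I - A_h)^{-1}P_h\bigr]d\lambda,
\end{eqnarray*}
combined with the stationary elliptic error estimate $\Vert[(\lambda I - A)^{-1} - (\lambda I - A_h)^{-1}P_h]v\Vert \leq Ch^2(1 + |\lambda|^{-1})\Vert v\Vert$, which follows from \eqref{ritz1} by a standard Galerkin argument applied to the resolvent equation on the sectorial contour $\mathcal C$. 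Using $\int_{\mathcal C} |e^{\lambda t}|\,|d\lambda| \leq Ct^{-1}$ then yields $\Vert T_h(t)v\Vert \leq Ch^2 t^{-1}\Vert v\Vert$.

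Finally, the three vertex estimates control the full triangle by interpolation in the fractional-power scale $V_\alpha$ and in the $h$ exponent, producing the claimed bound for every $0\leq \alpha \leq r\leq 2$. The main obstacle is the vertex $(r,\alpha)=(2,0)$: converting time smoothing of $S(t)$ into spatial regularity through a naive chaining such as $T_h(t)v = T_h(t/2)S(t/2)v + S_h(t/2)P_hT_h(t/2)v$ and applying the $(2,2)$-case to $S(t/2)v$ does recover an $h^2$ factor, but picks up a spurious $t^{-1}$ on both summands and, crucially, the residual piece $S_h(t/2)P_hT_h(t/2)v$ only benefits from the weak $r=0$ bound. The resolvent route is what cleanly couples the $h^2$ and $t^{-1}$ scalings.
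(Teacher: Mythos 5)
Your proposal takes a genuinely different route from the paper: the paper's proof of \lemref{lemma6a} is essentially a citation --- it invokes \cite[Lemma 3.1]{Antonio1} for the whole range $r\in[1,2]$, proves the $r=0$ case from the uniform boundedness of $S(t)$, $S_h(t)$ and $P_h$ (exactly your $(0,0)$ vertex), and interpolates. You instead reconstruct the cited estimate from scratch via the three vertices $(0,0)$, $(2,2)$, $(2,0)$. That architecture is sound and is the one used in the underlying references (Fujita--Suzuki, Thom\'ee); in particular your choice of the Dunford/resolvent representation at $(2,0)$ is the right tool here, since $A$ contains the advection term $\mathbf{q}\cdot\nabla$ and is not self-adjoint, so eigenfunction-expansion arguments are unavailable. (A small remark: the resolvent error bound is in fact $Ch^{2}\Vert v\Vert$ uniformly on the sector, since $\Vert u\Vert_{H^2}\leq C\Vert Au\Vert\leq C(\vert\lambda\vert\,\Vert u\Vert+\Vert f\Vert)\leq C\Vert f\Vert$ by the sectorial estimate; your weaker $(1+\vert\lambda\vert^{-1})$ version still closes provided the contour is kept at distance of order $t^{-1}$ from the origin.) Your final interpolation step and the $(0,0)$ case coincide with what the paper does.

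The one genuine gap is at the $(2,2)$ vertex. The error equation for $\theta=R_hS(t)v-S_h(t)P_hv$ reads $\theta_t-A_h\theta=(R_h-P_h)AS(t)v$, so Duhamel gives
\begin{eqnarray*}
\theta(t)=S_h(t)\theta(0)+\int_0^tS_h(t-s)(R_h-P_h)AS(s)v\,ds .
\end{eqnarray*}
Applying only \eqref{ritz1} and the uniform bound on $S_h$, as you propose, forces you to estimate $\Vert(R_h-P_h)AS(s)v\Vert\leq Ch^{2}\Vert AS(s)v\Vert_{H^{2}}\leq Ch^{2}s^{-1}\Vert v\Vert_{2}$, and $\int_0^t s^{-1}ds$ diverges; the cheaper bound $Ch\Vert AS(s)v\Vert_{H^{1}}\leq Chs^{-1/2}\Vert v\Vert_2$ integrates but only yields first order in $h$. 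So the step fails as written. It is repairable: split the integral at $t/2$, integrate by parts in $s$ on $[0,t/2]$ using $AS(s)v=\frac{d}{ds}S(s)v$ together with the smoothing bound $\Vert A_hS_h(t-s)\Vert_{L(H)}\leq C(t-s)^{-1}$ (each half then contributes only a factor $\log 2$), or alternatively run the same resolvent representation you use at $(2,0)$ with the refined bound $\Vert[(\lambda I-A)^{-1}-(\lambda I-A_h)^{-1}P_h]v\Vert\leq Ch^{2}\vert\lambda\vert^{-1}\Vert v\Vert_{2}$, obtained by writing $(\lambda I-A)^{-1}v=\lambda^{-1}v+\lambda^{-1}(\lambda I-A)^{-1}Av$ and its discrete analogue. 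Either repair yields $\Vert\theta(t)\Vert\leq Ch^{2}\Vert v\Vert_{2}$ and completes your argument.
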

\begin{proof}
The proof of Lemma \ref{lemma6a} for $r\in [1,2]$ can be found in \cite[Lemma 3.1]{Antonio1}. Using the stability property of $S(t)$ and $S_h(t)$, 
and the fact that the projection $P_h$ is bounded, it follows that 
\begin{eqnarray}
\label{addis2}
\Vert S(t)v-S_h(t)P_hv\Vert\leq C\Vert v\Vert.
\end{eqnarray}
Inequality \eqref{addis2}  shows that \eqref{addis1}  holds for $r=0$. Interpolating between $r=0$ and $r=2$ completes the proof of Lemma \ref{lemma6a}.
\end{proof}

\begin{lemma} 
Let $X(t)$ and $X^h(t)$ be  the  mild solutions  given respectively by \eqref{mild1} and \eqref{mild2}. 
\label{lemma7}
\begin{itemize}
\item[(1)]
For multiplicative noise, assume that \assref{assumption1}, \assref{assumption2}, \assref{assumption3} and  \assref{assumption4}  are fulfilled. 
Then the following error estimate holds:
\begin{itemize}
\item[(i)] For  $0\leq\beta<1$
 \begin{eqnarray*}
\Vert X(t)-X^h(t)\Vert_{L^2(\Omega, H)}\leq Ch^{\beta}, \quad\, t \in (0,T].
\end{eqnarray*}
\item[(ii)] For $1\leq\beta<2$
\begin{eqnarray*}
\Vert X(t)-X^h(t)\Vert_{L^2(\Omega, H)}\leq Ch^{1-\epsilon}, \quad\, t \in (0,T],
\end{eqnarray*}
where $\epsilon$ is a positive constant small enough.
\item[(iii)]  For $1\leq\beta<2$, if moreover \assref{assumption5} is fulfilled with $\gamma=\beta-1$, we have 
\begin{eqnarray*}
\Vert X(t)-X^h(t)\Vert_{L^2(\Omega, H)}\leq Ch^{\beta}, \quad\, t \in (0,T].
\end{eqnarray*}
\item[(iv)] For  $\beta=2$ and if \assref{assumption5} is fulfilled with $\gamma=1$, we have 
\begin{eqnarray*}
\Vert X(t)-X^h(t)\Vert_{L^2(\Omega, H)}\leq Ch^2(1+\max(0,\ln(t/h^2))), \quad\, t \in (0,T].
\end{eqnarray*}
\end{itemize}
\item[(2)] For additive noise ($B=\mathbf{I}$), if \assref{assumption1}, \assref{assumption2}, \assref{assumption3} and \assref{assumption6a} are fulfilled,
then the following error estimate holds:
\begin{itemize}
\item[(i)] For $0\leq \beta<2$  
\begin{eqnarray}
\Vert X(t)-X^h(t)\Vert_{L^2(\Omega, H)}\leq Ch^{\beta}.
\end{eqnarray}
\item[(ii)] For $\beta=2$ 
\begin{eqnarray*}
\Vert X(t)-X^h(t)\Vert_{L^2(\Omega, H)}\leq Ch^2(1+\max(0,\ln(t/h^2))), \quad\, t \in (0,T].
\end{eqnarray*}
\end{itemize}
\end{itemize}
\end{lemma}

\begin{proof}
The proof of (1) (i) and (iii) can be found in \cite[Theorem 6.1]{Antonio2}. The proof of (1) (ii) is similar to that of \cite[Theorem 6.1]{Antonio2} using \lemref{lemma6a}.
The proof of (2) (i) and (ii) can be found in \cite[Proposition 3.3]{Kovac1}.
\end{proof}

\begin{lemma}
\label{lemma8}
Under  \assref{assumption1}, for all $\omega\in\Omega$,  the stochastic perturbed semigroup $S^h_m(\omega)(t)$ satisfies the following stability properties
\begin{itemize}
\item[(i)] For  $ \gamma_1, \gamma_2\leq 1, \text{such that}\,\,\, 0\leq \gamma_1+\gamma_2\leq 1$, we have
\begin{eqnarray*}
\Vert (-A_h)^{-\gamma_1}(S^h_m(\omega)(t)-\mathbf{I})(-A_h)^{-\gamma_2}\Vert_{L(H)}\leq Ct^{\gamma_1+\gamma_2},\,\, t \in (0,T]. 
\end{eqnarray*}
\item[(ii)] For $\gamma_1\geq 0$, we have  \begin{eqnarray*} 
\Vert S^h_m(\omega)(t)(-A_h)^{\gamma_1}\Vert_{L(H)}\leq Ct^{-\gamma_1},\,\, t \in (0,T],\,\, \gamma_1\geq 0,
\end{eqnarray*}
\item[(iii)] For  $\gamma_1\geq 0$ and $0\leq \gamma_2<1$ such that $\gamma_2-\gamma_1\geq 0$, we have
\begin{eqnarray*}
\Vert (-A_h)^{-\gamma_1}S^h_m(\omega)(t)(-A_h)^{\gamma_2}\Vert_{L(H)}\leq Ct^{\gamma_1-\gamma_2},\,\, t \in (0,T]. 
\end{eqnarray*}
\item[(iv)] For $\gamma_1, \gamma_2>0$  such that $0\leq \gamma_2-\gamma_1\leq 1$, then the following estimate holds 
\begin{eqnarray*}
\Vert (-A_h)^{-\gamma_1}(S^h_m(\omega)(t)-\mathbf{I})(-A_h)^{\gamma_2}\Vert_{L(H)}\leq Ct^{\gamma_1-\gamma_2},\,\, t \in (0,T]. 
\end{eqnarray*}
\end{itemize}
where $C$ is a positive constant independent of $h$, $m$, $\Delta t$ and the sample  $\omega$.
\end{lemma}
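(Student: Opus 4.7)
The plan is to compare the perturbed semigroup $S^h_m(\omega)(t) = e^{(A_h + J^h_m(\omega))t}$ against the unperturbed $S_h(t) = e^{A_h t}$ via the variation-of-constants identity
\begin{equation*}
S^h_m(\omega)(t) \;=\; S_h(t) \;+\; \int_0^t S_h(t-s)\,J^h_m(\omega)\,S^h_m(\omega)(s)\,ds,\qquad t\in[0,T].
\end{equation*}
Three uniform ingredients drive everything: the Jacobian is uniformly bounded, $\|J^h_m(\omega)\|_{L(H)}\leq L$, by \assref{assumption3}; the perturbed semigroup itself is uniformly bounded by \lemref{lemma5}; and the unperturbed operator $A_h$ and its semigroup $S_h$ satisfy the smoothing estimates of \propref{prop1} with constants independent of $h$. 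Every constant that will appear depends only on $T$, $L$ and these uniform constants, so no dependence on $h$, $m$ or $\omega$ ever enters.

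I would prove the three parts in the order (ii) $\to$ (iii) $\to$ (i). For (ii), apply $(-A_h)^{\gamma_1}$ on the right of the identity and set $f(t):=\|S^h_m(\omega)(t)(-A_h)^{\gamma_1}\|_{L(H)}$. Using $\|S_h(t)(-A_h)^{\gamma_1}\|\leq C t^{-\gamma_1}$ from \propref{prop1} on the free term, together with $\|S_h(t-s)\|\leq C$ and $\|J^h_m\|\leq L$ inside the integral, one obtains
\[ f(t) \;\le\; C t^{-\gamma_1} + CL\int_0^t f(s)\,ds, \qquad t\in(0,T], \]
and a Gronwall-type argument yields $f(t)\leq C' t^{-\gamma_1}$ on $[0,T]$ for $\gamma_1\in[0,1)$ (the range $\gamma_1\geq 1$, if needed, is reduced to the previous case by writing $(-A_h)^{\gamma_1}=(-A_h)^{\gamma_1-\lfloor\gamma_1\rfloor}(-A_h)^{\lfloor\gamma_1\rfloor}$ and iterating). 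For (iii), multiply the identity by $(-A_h)^{-\gamma_1}$ on the left and $(-A_h)^{\gamma_2}$ on the right. The free term collapses to $S_h(t)(-A_h)^{\gamma_2-\gamma_1}$, bounded by $Ct^{\gamma_1-\gamma_2}$ via \propref{prop1}; the integral term is estimated by using $\|(-A_h)^{-\gamma_1}S_h(t-s)\|\leq C$ and applying (ii) to $\|S^h_m(s)(-A_h)^{\gamma_2}\|\leq Cs^{-\gamma_2}$, which reduces matters to $\int_0^t s^{-\gamma_2}\,ds \leq C t^{1-\gamma_2}\leq C' t^{\gamma_1-\gamma_2}$ on $[0,T]$, using $\gamma_1\geq 0$ and $\gamma_2<1$. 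Finally, for (i), rewriting the identity as $S^h_m(t)-\mathbf{I} = (S_h(t)-\mathbf{I}) + \int_0^t S_h(t-s)\,J^h_m\,S^h_m(s)\,ds$ and sandwiching by $(-A_h)^{-\gamma_1}$ and $(-A_h)^{-\gamma_2}$, the free term equals $(-A_h)^{-\gamma_1-\gamma_2}(S_h(t)-\mathbf{I})$ (functions of $A_h$ commute with $S_h$) and is bounded by $Ct^{\gamma_1+\gamma_2}$ by \propref{prop1}; the integral is treated by distributing the fractional powers of $-A_h$ onto the two semigroups and invoking (ii) as needed.

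The main obstacle lies in part (i) when one of $\gamma_1,\gamma_2$ is negative: then $(-A_h)^{-\gamma_i}$ is unbounded and must first be absorbed by a smoothing factor of $S_h$ (on the left) or of $S^h_m$ (on the right, via part (ii)) before any norm is taken, which is exactly why (ii) must be in hand before tackling (i). A case-by-case accounting according to the signs of $\gamma_1$ and $\gamma_2$ reduces the inner integrand to $(t-s)^{\min(0,\gamma_1)}s^{\min(0,\gamma_2)}$ times a uniform constant, and the resulting Beta-type integral $\int_0^t (t-s)^{\min(0,\gamma_1)}s^{\min(0,\gamma_2)}\,ds$ is finite and dominated by $C t^{\gamma_1+\gamma_2}$ on $[0,T]$ precisely under the standing hypotheses $\gamma_1,\gamma_2\leq 1$ and $\gamma_1+\gamma_2\in[0,1]$. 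Apart from this careful case-splitting and the Gronwall step in (ii), the remainder is a routine blend of the variation-of-constants formula with \propref{prop1}, and uniformity in $h$, $m$, $\omega$ is automatic from \assref{assumption3} and \lemref{lemma5}.
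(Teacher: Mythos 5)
Your argument is correct and is essentially the paper's own proof: both rest on the variation-of-parameters identity for the perturbed semigroup, the smoothing estimates of \propref{prop1}, the uniform bounds $\Vert J^h_m(\omega)\Vert_{L(H)}\leq L$ and $\Vert S^h_m(\omega)(t)\Vert_{L(H)}\leq C$, and a Gronwall step for (ii). The only cosmetic differences are that for (iii) the paper invokes the reversed formula $S^h_m(\omega)(t)v=S_h(t)v+\int_0^t S^h_m(\omega)(s)J^h_m(\omega)S_h(t-s)v\,ds$ so that it never needs (ii), whereas you reuse the first identity together with (ii), and that your case-splitting in (i) for negative exponents is actually more careful than the paper's, which tacitly treats $(-A_h)^{-\gamma_1},(-A_h)^{-\gamma_2}$ as bounded (the only case used later).
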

\begin{proof}
We recall that the perturbed semigroup satisfies the following variation of parameters 
formula (see \cite[Chapter 3, Corollary 1.7]{Klaus} or \cite[Section 3.1, Page 77]{Pazy})
\begin{eqnarray}
\label{vp1a}
S^h_m(\omega)(t)v=S_h(t)v+\int_0^tS_h(t-s)J^h_m(\omega)S^h_m(\omega)(s)vds,
\end{eqnarray}
for all $v\in H$ and all $t\geq 0$. Then it follows from \eqref{vp1a} that 
\begin{eqnarray}
\label{vp1}
(S^h_m(\omega)(t)-\mathbf{I})v=(S_h(t)-\mathbf{I})v+\int_0^tS_h(t-s)J^h_m(\omega)S^h_m(\omega)(s)vds.
\end{eqnarray}
It is obvious  that  $(-A_h)^{-\gamma_2}v\in H$ for all $v\in H$. Then, replacing $v$ in \eqref{vp1} by $(-A_h)^{-\gamma_2}v$
and pre-multiplying both right-hand sides of \eqref{vp1} by $(-A_h)^{-\gamma_1}$ yields 
\begin{eqnarray}
\label{vp2}
&&(-A_h)^{-\gamma_1}(S^h_m(\omega)(t)-\mathbf{I})(-A_h)^{-\gamma_2}v\nonumber\\
&=&(S_h(t)-\mathbf{I})(-A_h)^{-\gamma_2-\gamma_1}v\\
&+&\int_0^t(-A_h)^{-\gamma_1}S_h(t-s)J^h_m(\omega)S^h_m(\omega)(s)(-A_h)^{-\gamma_2} v ds\nonumber.
\end{eqnarray}
Taking the norm in both sides of \eqref{vp2} and using  \propref{prop1}, the fact that  $(-A_h)^{-\gamma_2}$ and $J^h_m(\omega)$ are uniformly bounded, it follows that 
\begin{eqnarray*}
\Vert (-A_h)^{-\gamma_1}(S^h_m(\omega)(t)-\mathbf{I})(-A_h)^{-\gamma_2}v\Vert &\leq& Ct^{\gamma_2+\gamma_1}\Vert v\Vert+C\int_0^t\Vert v\Vert ds\nonumber\\
&\leq& Ct^{\gamma_2+\gamma_1}\Vert v\Vert.
\end{eqnarray*}
Using the definition of the norm $\Vert . \Vert_{L(H)}$ gives the desired result for (i). To prove (ii), we replace $v$ by $(-A)^{\gamma_1}v$ in \eqref{vp1a} and obtain
\begin{eqnarray}
\label{vp3a}
S^h_m(\omega)(t)(-A_h)^{\gamma_1}v&=&S_h(t)(-A_h)^{\gamma_1}v\nonumber\\
&+&\int_0^tS_h(t-s)J^h_m(\omega)S^h_m(\omega)(s)(-A_h)^{\gamma_1}vds, 
\end{eqnarray}
for all $v\in H$ and all $t\geq 0$. Taking the norm in both sides of \eqref{vp3a} and using the stability property of $S_h(t)$, $S^h_m(\omega)(t)$ 
with the uniformly  boundedness of $J^h_m(\omega)$ gives 
\begin{eqnarray}
\label{vp3b}
\Vert S^h_m(\omega)(t)(-A_h)^{\gamma_1}v\Vert&\leq& Ct^{-\gamma_1}\Vert v\Vert\nonumber\\
&+&C\int_0^t\Vert S^h_m(\omega)(s)(-A_h)^{\gamma_1}\Vert_{L(H)}\Vert v\Vert ds.
\end{eqnarray}
From \eqref{vp3b} it holds that
\begin{eqnarray}
\label{vp3c}
\Vert S^h_m(\omega)(t)(-A_h)^{\gamma_1}\Vert_{L(H)}\leq Ct^{-\gamma_1}+C\int_0^t\Vert S^h_m(\omega)(s)(-A_h)^{\gamma_1}\Vert_{L(H)} ds.
\end{eqnarray}
Applying the continuous Gronwall's lemma to \eqref{vp3c} completes the proof of (ii).  To prove (iii), we replace $v$ in \eqref{vp1a} by $(-A_h)^{\gamma_2}v$ and   pre-multiply both sides by $(-A_h)^{-\gamma_1}$. This yields
\begin{eqnarray}
\label{espoir3}
(-A_h)^{-\gamma_1}S^h_m(\omega)(t)(-A_h)^{\gamma_2}v&=&(-A_h)^{-\gamma_1}S_h(t)(-A_h)^{\gamma_2}v\\
&+&\int_0^t(-A_h)^{-\gamma_1}S_h(t-s)J^h_m(\omega)S^h_m(\omega)(s)(-A_h)^{\gamma_2}vds.\nonumber
\end{eqnarray}
Taking the norm in both sides of \eqref{espoir3}, using the stability properties of \propref{prop1}, the boundedness of $(-A_h)^{-\gamma_1}$, $J^h_m$ and applying \lemref{lemma8} (ii), it holds that
\begin{eqnarray}
\Vert (-A_h)^{-\gamma_1}S^h_m(\omega)(t)(-A_h)^{\gamma_2}v\Vert&\leq& \Vert (-A_h)^{-\gamma_1}S_h(t)(-A_h)^{\gamma_2}v\Vert\nonumber\\
&+&C\int_0^t\Vert S^h_m(\omega)(s)(-A_h)^{\gamma_2}v\Vert ds\nonumber\\
&\leq& Ct^{\gamma_1-\gamma_2}\Vert v\Vert+C\int_0^ts^{-\gamma_2}ds\Vert v\Vert\nonumber\\
&\leq& C(t^{\gamma_1-\gamma_2}+t^{1-\gamma_2})\Vert v\Vert,
\end{eqnarray}
and for $t\leq T$, since $\gamma_1\leq\gamma_2\leq 1$, $t^{1-\gamma_2}\leq C(T)t^{\gamma_1-\gamma_2}$. 
This ends the proof of (iii). The proof of (iv) is similar to that of (i).
\end{proof}

The following lemma is similar to \cite[Lemma 4]{Julia1}, but its proof is easier than that of \cite[Lemma 4]{Julia1} since we do not use any further lemmas in its proof.
\begin{lemma}
\label{lemma9}
Under  \assref{assumption1} and \assref{assumption3}, the perturbed semigroup $S^h_m(\omega)$ satisfies the following stability property
\begin{eqnarray*}
\left\Vert e^{(A_h+J^h_m(\omega))\Delta t}\cdots e^{(A_h+J^h_k(\omega))\Delta t}(-A_h)^{\nu}\right\Vert_{L(H)}\leq Ct_{m+1-k}^{-\nu}, \quad 0\leq \nu< 1,
\end{eqnarray*}
where $C$ is a positive constant independent of $m$, $k$, $h$, $\Delta t$ and  the sample $\omega$.
\end{lemma}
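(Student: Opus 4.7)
The plan is to combine a discrete Duhamel decomposition of the product with the smoothing estimates of \lemref{lemma8}, much in the spirit of the deterministic argument in \cite{Julia1}. The single-step variation-of-constants formula $T_j := e^{(A_h+J^h_j(\omega))\Delta t} = S_h(\Delta t) + R_j$, with
\begin{equation*}
R_j := \int_0^{\Delta t} S_h(\Delta t-s)\, J^h_j(\omega)\, S^h_j(\omega)(s)\, ds,
\end{equation*}
(already used in the proof of \lemref{lemma8}) will let me replace part of the composition by the bare analytic semigroup $S_h$, whose smoothing over the total time $t_{m+1-k}$ is exactly what appears on the right-hand side of the statement.

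First I would set $n := m-k$ and $V_n := T_{k+n}\cdots T_k$, and prove by a short induction on $n$ the discrete Duhamel identity
\begin{equation*}
V_n \;=\; S_h(t_{n+1}) \;+\; \sum_{j=0}^{n} S_h(t_{n-j})\, R_{k+j}\, V_{j-1}, \qquad V_{-1} := \mathbf{I}.
\end{equation*}
Multiplying this on the right by $(-A_h)^{\nu}$ and setting $a_n := \|V_n(-A_h)^{\nu}\|_{L(H)}$, I would treat the three resulting contributions separately. The leading term is handled by \propref{prop1}, giving $\|S_h(t_{n+1})(-A_h)^{\nu}\|_{L(H)} \leq C t_{n+1}^{-\nu}$. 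For indices $j \geq 1$ in the sum, the uniform $L(H)$-bounds on $S_h$ and $S^h_{k+j}$ together with $\|J^h_{k+j}\|_{L(H)} \leq C$ (from \assref{assumption3}) yield $\|S_h(t_{n-j})R_{k+j}V_{j-1}(-A_h)^{\nu}\|_{L(H)} \leq C\Delta t\, a_{j-1}$.

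The main obstacle is the $j=0$ term $S_h(t_n)R_k(-A_h)^{\nu}$, where $V_{-1}=\mathbf{I}$ leaves no inductive hypothesis to lean on. My workaround is to bring $(-A_h)^{\nu}$ under the integral defining $R_k$ and invoke \lemref{lemma8}(ii), which gives $\|S^h_k(s)(-A_h)^{\nu}\|_{L(H)} \leq Cs^{-\nu}$; this singularity is integrable \emph{precisely because} $\nu<1$, yielding $\|R_k(-A_h)^{\nu}\|_{L(H)} \leq C(\Delta t)^{1-\nu}$. The elementary observation $(n+1)\Delta t \leq T$ then upgrades $(\Delta t)^{1-\nu}$ to $CT\, t_{n+1}^{-\nu}$, so this contribution also fits the target form.

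Assembling the three estimates gives the integral-type inequality
\begin{equation*}
a_n \;\leq\; C\, t_{n+1}^{-\nu} \;+\; C \Delta t \sum_{j=0}^{n-1} a_j,
\end{equation*}
and the argument closes by a discrete Gronwall inequality: the geometric factor $(1+C\Delta t)^n \leq e^{CT}$ is uniformly bounded, while the Riemann sum $\sum_{j=0}^{n-1}\Delta t\, t_{j+1}^{-\nu}$ is dominated by $\int_0^{t_n} s^{-\nu}\, ds \leq T^{1-\nu}/(1-\nu)$, which is in turn controlled by $CT\, t_{n+1}^{-\nu}$. This yields $a_n \leq C\, t_{n+1}^{-\nu}$ with a constant independent of $h$, $m$, $k$, $\Delta t$ and the sample $\omega$, as claimed.
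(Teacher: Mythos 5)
Your proposal is correct and follows essentially the same route as the paper: the discrete Duhamel identity you derive is exactly the paper's telescoping decomposition \eqref{pertur1} (with your $R_j$ equal to $e^{(A_h+J^h_j(\omega))\Delta t}-e^{A_h\Delta t}$ via the variation-of-parameters formula \eqref{pertur4}), the delicate $j=0$ term is handled identically by pushing $(-A_h)^{\nu}$ under the integral and invoking \lemref{lemma8}(ii) with the integrability of $s^{-\nu}$ for $\nu<1$, and the argument closes with the same singular discrete Gronwall step. If anything, your treatment of the boundary term and of the final Gronwall bound is stated a little more carefully than in \eqref{pertur7}.
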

\begin{proof}
As in \cite{Antjd1} we set 
\begin{eqnarray*}
\left\{\begin{array}{ll}
S^h_{m,k}(\omega) := e^{(A_h+J^h_m(\omega))\Delta t}\cdots e^{(A_h+J^h_k(\omega))\Delta t}, \quad \text{if} \quad m\geq k\\
S^h_{m,k}(\omega) :=\mathbf{I}, \hspace{5cm} \text{if}\quad m<k
\end{array}
\right.
\end{eqnarray*}
Using the telescoping sum, we can rewrite the composition of the perturbed semigroup  $S^h_{m,k}(\omega)$ as follow
\begin{eqnarray}
\label{pertur1}
S^h_{m,k}(\omega)&=&e^{A_h(t_{m+1-k})}+e^{A_h(t_{m+1}-t_{k+1})}\left(e^{(A_h+J^h_k(\omega))\Delta t}-e^{A_h\Delta t}\right)\nonumber\\
&+&\sum_{j=k+1}^{m}e^{A_h(t_{m+1}-t_{j+1})}\left(e^{(A_h+J^h_j(\omega))\Delta t}-e^{A_h\Delta t}\right)S^h_{j-1,k}(\omega).
\end{eqnarray}
Multiplying both sides of \eqref{pertur1} by  $(-A_h)^{\nu}$ yields 
\begin{eqnarray}
\label{pertur2}
&&S^h_{m,k}(\omega)(-A_h)^{\nu}\nonumber\\
&=&e^{A_ht_{m+1-k}}(-A_h)^{\nu}+e^{A_h(t_{m+1}-t_{k+1})}\left(e^{(A_h+J^h_k(\omega))\Delta t}-e^{A_h\Delta t}\right)(-A_h)^{\nu}\nonumber\\
&+&\sum_{j=k+1}^{m}e^{A_h(t_{m+1}-t_{j+1})}\left(e^{(A_h+J^h_j(\omega))\Delta t}-e^{A_h\Delta t}\right)S^h_{j-1,k}(\omega)(-A_h)^{\nu}.
\end{eqnarray}
Using the variation of parameter formula \eqref{vp1a}, the fact that the Jacobian, the semigroup $S^h_m(\omega)(t)$ and $S_h(t)$ are  uniformly bounded, we obtain 
\begin{eqnarray}
\label{atrejd}
\left\Vert e^{(A_h+J^h_m(\omega))\Delta t}-e^{A_h\Delta t}\right\Vert_{L(H)}\leq C\Delta t
\end{eqnarray}
Taking the norm in both sides of \eqref{pertur2} and using the stability property of $S_h(t)$ 
together with \eqref{atrejd}  gives
\begin{eqnarray}
\label{pertur3}
&&\left \Vert S^h_{m,k}(\omega)(-A_h)^{\nu}\right\Vert_{L(H)}\nonumber\\
&\leq& Ct^{-\nu}_{m+1-k}+\left\Vert e^{A_h(t_{m+1}-t_{k+1})}\right\Vert_{L(H)}\left\Vert\left(e^{(A_h+J^h_k(\omega))\Delta t}-e^{A_h\Delta t}\right)(-A_h)^{\nu}\right\Vert_{L(H)}\nonumber\\ &+&\sum_{j=k+1}^{m}\Vert e^{A_h(t_{m+1}-t_{j+1})}\Vert_{L(H)}\left\Vert e^{(A_h+J^h_m(\omega))\Delta t}-e^{A_h\Delta t}\right\Vert_{L(H)}\Vert S^h_{j-1,k}(\omega)(-A_h)^{\nu}\Vert_{L(H)}\nonumber\\
&\leq&Ct^{-\nu}_{m+1-k}+C\left\Vert\left(e^{(A_h+J^h_k(\omega))\Delta t}-e^{A_h\Delta t}\right)(-A_h)^{\nu}\right\Vert_{L(H)}\nonumber\\
&+&C\Delta t\sum_{j=k+1}^{m}\Vert S^h_{j-1,k}(\omega)(-A_h)^{\nu}\Vert_{L(H)}.
\end{eqnarray}
Rewriting \eqref{vp1a} with $t=\Delta t$ yields
\begin{eqnarray}
\label{pertur4}
e^{(A_h+J^h_m(\omega))\Delta t}-e^{A_h\Delta t}=\int_0^{\Delta t}e^{A_h(\Delta t -s)}J^h_m(\omega)e^{(A_h+J^h_m(\omega))s}ds.
\end{eqnarray}
Multiplying both sides of \eqref{pertur4} by $(-A_h)^{\nu}$ gives
\begin{eqnarray}
\label{pertur5}
&&\left(e^{(A_h+J^h_m(\omega))\Delta t}-e^{A_h\Delta t}\right)(-A_h)^{\nu}\nonumber\\
&=&\int_0^{\Delta t}e^{A_h(\Delta t -s)}J^h_m(\omega)e^{(A_h+J^h_m(\omega))s}(-A_h)^{\nu}ds.
\end{eqnarray}
Taking the norm in both sides of \eqref{pertur5}, using the stability property of $e^{A_ht}$, the uniform boundedness of $J^h_m$   and Lemma \ref{lemma8} (ii) with $\gamma_1=\nu$  gives
\begin{eqnarray}
\label{pertur6}
&&\left\Vert\left(e^{(A_h+J^h_m(\omega))\Delta t}-e^{A_h\Delta t}\right)(-A_h)^{\nu}\right\Vert_{L(H)}\nonumber\\
&\leq&\int_0^{\Delta t}\Vert e^{A_h(\Delta t -s)}\Vert_{L(H)}\Vert J^h_m(\omega)\Vert_{L(H)}\Vert e^{(A_h+J^h_m(\omega))s}(-A_h)^{\nu}\Vert_{L(H)}ds\nonumber\\
&\leq& C\int_0^{\Delta t}s^{-\nu}ds\leq C\Delta t^{1-\nu}=Ct_1^{-\nu}\Delta t.
\end{eqnarray}
Substituting \eqref{pertur6} in \eqref{pertur3} yields 
\begin{eqnarray}
\label{pertur7}
\Vert S^h_{m,k}(\omega)(-A_h)^{\nu}\Vert_{L(H)}
&\leq& Ct^{-\nu}_{m+1-k}+Ct_1^{-\nu}\Delta t\Vert \mathbf{I}\Vert_{L(H)}\nonumber\\
&+&C\Delta t\sum_{j=k+1}^{m}\Vert S^h_{j-1,k}(\omega)(-A_h)^{\nu}\Vert_{L(H)}
\end{eqnarray}
Applying the discrete Gronwall's inequality to \eqref{pertur7} completes the proof of \lemref{lemma9}.
\end{proof}

\begin{lemma}
\label{lemma10}
If \assref{assumption6a} is  fulfilled, then the following estimate holds
\begin{eqnarray}
\label{addiv1}
\left\Vert (-A_h)^{\frac{\beta-1}{2}}P_hQ^{\frac{1}{2}}\right\Vert^2_{\mathcal{L}_2(H)}<C, 
\end{eqnarray}
where $\beta$ is defined in \assref{assumption2}.
\end{lemma}
\begin{proof}
The proof when $0\leq\beta\leq 1$ can be found in \cite[Proposition 4.1]{Antonio2}. To prove  \eqref{addiv1} when $1<\beta\leq 2$, we use the definition of $\Vert .\Vert_{\mathcal{L}_2(H)}$,  apply \lemref{lemma1} with $\alpha=\frac{\beta-1}{2}$  and  \assref{assumption6a} to get
\begin{eqnarray}
\left\Vert (-A_h)^{\frac{\beta-1}{2}}P_hQ^{\frac{1}{2}}\right\Vert^2_{\mathcal{L}_2(H)}&=&\sum_{i=1}^{\infty}\left\Vert (-A_h)^{\frac{\beta-1}{2}}P_hQ^{\frac{1}{2}}e_i\right\Vert^2\nonumber\\
&\leq& C\sum_{i=1}^{\infty}\left\Vert (-A)^{\frac{\beta-1}{2}}Q^{\frac{1}{2}}e_i\right\Vert^2\nonumber\\
&=&C\left\Vert (-A)^{\frac{\beta-1}{2}}Q^{\frac{1}{2}}\right\Vert^2_{\mathcal{L}_2(H)}\leq C.
\end{eqnarray}
\end{proof}
\begin{lemma}
\label{lemma11}
Under \assref{assumption3} and \assref{assumption6b}, for all $\omega\in\Omega$, the following estimates hold
\begin{eqnarray}
\label{tamb1}
\Vert (G^h_k(\omega))'(u)v\Vert\leq C\Vert v\Vert, \quad u,v\in H,\quad k\in\mathbb{N},\\
\label{tamb2}
\Vert (-A_h)^{\frac{-\eta}{2}}(G^h_k(\omega))''(u)(v_1,v_2)\Vert\leq C\Vert v_1\Vert.\Vert v_2\Vert,\quad v_1,v_2\in H,\quad k\in\mathbb{N},
\end{eqnarray}
for some  $\eta \in [1,2)$.
\end{lemma}
\begin{proof}
The proof follows the same lines as \cite[Proposition 4.1]{Antonio2}. Indeed since the Jacobian  $J^h_k(\omega)$ is a linear operator, 
taking the differential in both sides of  \eqref{remainder2} yields
\begin{eqnarray}
\label{tamb4}
(G^h_k(\omega))'(u)=P_hF'(u)-J^h_k(\omega)=P_hF'(u)-P_hF'(X^h_k(\omega)),
\end{eqnarray}
and therefore 
\begin{eqnarray}
\label{tamb5}
(G^h_k(\omega))'(u)v=P_hF'(u)v-P_hF'(X^h_k(\omega))v,\quad v\in H.
\end{eqnarray}
Taking the norm in both sides of \eqref{tamb5} and using \assref{assumption3} yields 
\begin{eqnarray}
\label{tamb6}
\Vert (G^h_k(\omega))'(u)v\Vert\leq \Vert P_hF'(u)v\Vert+\Vert P_hF'(X^h_k(\omega))v\Vert \leq C\Vert v\Vert,
\end{eqnarray}
which proves \eqref{tamb1}. Taking the differential at the point $u\in H$  in both sides of \eqref{tamb4} yields 
\begin{eqnarray}
\label{tamb7}
(G^h_k(\omega))''(v_1,v_2)=P_hF''(u)(v_1,v_2), \quad v_1,v_2\in H.
\end{eqnarray}
Taking the norm in both sides of \eqref{tamb7}, using \cite[(70)]{Antonio2} and \assref{assumption6b} yields
\begin{eqnarray}
\Vert (-A_h)^{\frac{-\eta}{2}}(G^h_k(\omega))''(u)(v_1,v_2)\Vert&=&\Vert(-A_h)^{\frac{-\eta}{2}} P_hF''(u)(v_1,v_2)\Vert\nonumber\\
&\leq& \Vert (-A)^{\frac{-\eta}{2}}F''(u)(v_1,v_2)\Vert\nonumber\\
&\leq& C\Vert v_1\Vert.\Vert v_2\Vert.
\end{eqnarray}
\end{proof}

Gathering our preparatory results,  we are now ready to prove our main result in \thmref{mainresult1}.
\subsection{Proof of \thmref{mainresult1}}
\label{proofmainresult1}
Using the standard  technique in the error analysis, we  split the fully discrete error in two terms
\begin{eqnarray*}
\Vert X(t_m)-X^h_m\Vert_{L^2(\Omega, H)}&\leq& \Vert X(t_m)-X^h(t_m)\Vert_{L^2(\Omega, H)}+\Vert X^h(t_m)-X^h_m\Vert_{L^2(\Omega, H)}\nonumber\\
&=:&err_0+err_1.
\end{eqnarray*}
Note that  the space  error $err_0$ is   estimated by Lemma \ref{lemma7}. It remains to estimate the time error $err_1$. 
We estimate the time error $err_1$ for both $0\leq\beta<1$ and $1\leq \beta<2$ separately in the following two subsections.
\subsubsection{Estimate of the time error for $0\leq \beta<1$}
\label{firstestimate}
 We recall that the exact solution at $t_m$ of the semidiscrete  problem is given by 
\begin{eqnarray}
\label{num1}
X^h(t_m)&=&e^{(A_h+J_{m-1}^h)\Delta t}X^h(t_{m-1})\nonumber\\
&+&\int_{t_{m-1}}^{t_m}e^{(A_h+J_{m-1}^h)(t_m-s)}G^h_{m-1}(X^h(s))ds\nonumber\\
&+&\int_{t_{m-1}}^{t_m}e^{(A_h+J_{m-1}^h)(t_m-s)}P_hB(X^h(s))dW(s).
\end{eqnarray}
We also recall that the numerical solution at $t_m$  given by \eqref{erem} can be rewritten as 
\begin{eqnarray}
\label{num2}
X^h_m&=&e^{(A_h+J_{m-1}^h)\Delta t}X^h_{m-1}\nonumber\\
&+&\int_{t_{m-1}}^{t_m}e^{(A_h+J_{m-1}^h)(t_m-s)}G^h_{m-1}(X^h_{m-1})ds\nonumber\\
&+&\int_{t_{m-1}}^{t_m}e^{(A_h+J_{m-1}^h)\Delta t}P_hB(X^h_{m-1})dW(s).
\end{eqnarray}
If $m=1$ then it follows from \eqref{num1} and \eqref{num2} that 
\begin{eqnarray}
\label{addition1}
&&\Vert X(t_1)-X^h_1\Vert_{L^2(\Omega, H)}\nonumber\\
&\leq&\left\Vert \int_0^{\Delta t}e^{(A_h+J^h_0)(\Delta t-s)}[G^h_0(X^h(s))-G^h_0(X^h_0)]ds\right\Vert_{L^2(\Omega, H)}\nonumber\\
&+&\left\Vert\int_0^{\Delta t}\left[e^{(A_h+J^h_0)(\Delta t-s)}P_hB(X^h(s))-e^{(A_h+J^h_0)\Delta t}P_hB(X^h_0)\right]dW(s)\right\Vert_{L^2(\Omega, H)}\nonumber\\
&=:& I+II.
\end{eqnarray}
Using Lemma \ref{lemma5}, Lemma \ref{lemma6}, \eqref{regular1} and the fact that  $X^h_0=P_hX_0$ we obtain the following estimate
\begin{eqnarray}
\label{addition3}
I\leq C\Delta t.
\end{eqnarray}
Using the It\^{o}'s isometry property, triangle inequality, Lemma \ref{lemma5}, Assumption \ref{assumption4}, \eqref{regular1} and 
the fact that  $(a+b)^2\leq 2a^2+2b^2$ for all $a,b\in\mathbb{R}$, $\sqrt{u+v}\leq \sqrt{u}+\sqrt{v}$ for all positive real numbers $u$ and $v$, we obtain the following estimate
\begin{eqnarray}
\label{addition5}
II
&\leq& C\Delta t^{1/2}.
\end{eqnarray}
Inserting \eqref{addition5} and \eqref{addition3} in \eqref{addition1} yields 
\begin{eqnarray}
\label{addition6}
\Vert X^h(t_1)-X^h_1\Vert_{L^2(\Omega, H)}\leq C\Delta t^{1/2}.
\end{eqnarray}
For $m\geq 2$, we iterate the mild solution  \eqref{num1} at $t_m$ by substituting $X^h(t_j),\,j =1,2,..,m-1$  in  \eqref{num1} by their  mild forms. We obtain
{ \small {
\begin{eqnarray}
\label{num1a}
\lefteqn {X^h(t_m)}&& \nonumber\\
&=&e^{(A_h+J_{m-1}^h)\Delta t}\cdots e^{(A_h+J^h_0)\Delta t} X^h(0)+\int_{t_{m-1}}^{t_m}e^{(A_h+J^h_{m-1})(t_m-s)}G^h_{m-1}(X^h(s))ds \nonumber\\
&+&\int_{t_{m-1}}^{t_m}e^{(A_h+J_{m-1}^h)(t_m-s)}P_hB(X^h(s))dW(s)\\
&+&\sum_{k=0}^{m-2}\int_{t_{m-k-2}}^{t_{m-k-1}} e^{(A_h+J^h_{m-1})\Delta t}\cdots e^{(A_h+J^h_{m-k-1})\Delta t}e^{(A_h+J^h_{m-k-2})(t_{m-k-1}-s)}G^h_{m-k-2}(X^h(s))ds\nonumber\\
&+&\sum_{k=0}^{m-2}\int_{t_{m-k-2}}^{t_{m-k-1}}e^{(A_h+J^h_{m-1})\Delta t}\cdots e^{(A_h+J^h_{m-k-1})\Delta t}e^{(A_h+J^h_{m-k-2})(t_{m-k-1}-s)}P_hB(X^h(s))dW(s) \nonumber.
\end{eqnarray}
}}
Similarly,  for $m\geq 2$, we iterate the numerical solution \eqref{num2} at $t_m$ by substituting $X^h_j,\,j =1,2,..,m-1$  only in the first term of \eqref{num2} by their expressions. We obtain
{ \small {
\begin{eqnarray}
\label{num2a}
\lefteqn{X^h_m}\nonumber\\
&=&e^{(A_h+J_{m-1}^h)\Delta t}\cdots e^{(A_h+J^h_0)\Delta t} X^h(0)+\int_{t_{m-1}}^{t_m}e^{(A_h+J^h_{m-1})(t_m-s)}G^h_{m-1}(X^h_{m-1})ds\nonumber\\
&+&\int_{t_{m-1}}^{t_m}e^{(A_h+J_{m-1}^h)\Delta t}P_hB(X^h_{m-1})dW(s)\nonumber\\
&+&\sum_{k=0}^{m-2}\int_{t_{m-k-2}}^{t_{m-k-1}}e^{(A_h+J^h_{m-1})\Delta t}\cdots e^{(A_h+J^h_{m-k-1})\Delta t}e^{(A_h+J^h_{m-k-2})(t_{m-k-1}-s)} \\
  &&G^h_{m-k-2}(X^h_{m-k-2})ds\nonumber\\
&+&\sum_{k=0}^{m-2}\int_{t_{m-k-2}}^{t_{m-k-1}}e^{(A_h+J^h_{m-1})\Delta t}\cdots e^{(A_h+J^h_{m-k-1})\Delta t}e^{(A_h+J^h_{m-k-2})\Delta t}
 P_hB(X^h_{m-k-2})dW(s).\nonumber
\end{eqnarray}
}}
Therefore, it follows from \eqref{num1a} and \eqref{num2a} and  the triangle inequality that 
\begin{eqnarray}
\label{somme1}
\dfrac{1}{4}\Vert X^h(t_m)-X^h_m\Vert^2_{L^2(\Omega, H)} \leq III+IV+V+VI,
\end{eqnarray}
where
{ \small {
\begin{eqnarray*}
III&=& \left\Vert\int_{t_{m-1}}^{t_m}e^{(A_h+J^h_{m-1})(t_m-s)}\left[G^h_{m-1}(X^h(s))-G^h_{m-1}(X^h_{m-1})\right]ds\right\Vert^2_{L^2(\Omega, H)},\\
IV&=&\left\Vert\int_{t_{m-1}}^{t_m}\left(e^{(A_h+J^h_{m-1})(t_m-s)}P_hB(X^h(s))-e^{(A_h+J^h_{m-1})\Delta t}P_hB(X^h_{m-1})\right)dW(s)\right\Vert^2_{L^2(\Omega, H)},\\
\end{eqnarray*}
}}
{ \small {
\begin{eqnarray*}
V&=&\left\Vert\sum_{k=0}^{m-2}\int_{t_{m-k-2}}^{t_{m-k-1}}e^{(A_h+J^h_{m-1})\Delta t}\cdots e^{(A_h+J^h_{m-k-1})\Delta t}e^{(A_h+J^h_{m-k-2})(t_{m-k-1}-s)}\right. \\
&&\left.(G^h_{m-k-2}(X^h(s))-G^h_{m-k-2}(X^h_{m-k-2}))ds\right \Vert^2_{L^2(\Omega, H)},\\
VI&=&\left\Vert\sum_{k=0}^{m-2}\int_{t_{m-k-2}}^{t_{n-k-1}}e^{(A_h+J^h_{m-1})\Delta t}\cdots e^{(A_h+J^h_{m-k-1})\Delta t}\right. \\
&&\left.\left(e^{(A_h+J^h_{m-k-2})(t_{m-k-1}-s)}P_hB(X^h(s))-e^{(A_h+J^h_{m-k-1})\Delta t}P_hB(X^h_{m-k-2})\right)dW(s)\right\Vert^2_{L^2(\Omega, H)}.
\end{eqnarray*}
}}
Using Holder's inequality, the stability property of $S^h_m(t)$, \lemref{lemma6}, the  triangle inequality and the fact  that  $(a+b)^2\leq 2a^2+2b^2$ yields
{ \small {
\begin{eqnarray}
\label{num2aaa}
&&III\nonumber\\
&\leq&\left( \int_{t_{m-1}}^{t_m}\left\Vert e^{(A_h+J^h_{m-1})(t_m-s)}(G^h_{m-1}(X^h(s))-G^h_{m-1}(X^h_{m-1}))\right\Vert_{L^2(\Omega, H)}ds\right)^2\nonumber\\
&\leq&\left( \int_{t_{m-1}}^{t_m}\left(\mathbb{E}\left[\left\Vert e^{(A_h+J^h_{m-1})(t_m-s)}\right\Vert^2_{L(H)} \Vert(G^h_{m-1}(X^h(s))-G^h_{m-1}(X^h_{m-1}))\Vert^2\right]\right)^{1/2}ds\right)^2\nonumber\\
&\leq& C\left(\int_{t_{m-1}}^{t_m}\left(\mathbb{E}\Vert G^h_{m-1}(X^h(s))-G^h_{m-1}(X^h_{m-1})\Vert^2\right)^{1/2}ds\right)^2\nonumber\\
&\leq&C\left(\int_{t_{m-1}}^{t_m}\left(\mathbb{E}\Vert X^h(s)-X^h_{m-1})\Vert^2\right)^{1/2}ds\right)^2=C\left(\int_{t_{m-1}}^{t_m}
\Vert X^h(s)-X^h_{m-1})\Vert_{L^2(\Omega,H)}ds\right)^2\nonumber\\
&\leq& C\left(\int_{t_{m-1}}^{t_m}\Vert X^h(s)-X^h(t_{m-1})\Vert_{L^2(\Omega, H)}ds\right)^2
+C\Delta t^2\Vert X^h(t_{m-1})-X^h_{m-1}\Vert^2_{L^2(\Omega, H)}.
\end{eqnarray}
}}
Using Lemma \ref{lemma4}, it follows from \eqref{num2aaa} that 
\begin{eqnarray}
\label{num2aaaa}
III&\leq& C\left(\int_{t_{m-1}}^{t_m}(s-t_{m-1})^{\beta/2}ds\right)^2+C\Delta t^2\Vert X^h(t_{m-1})-X^h_{m-1}\Vert^2_{L^2(\Omega,H)}\nonumber\\
&\leq& C\Delta t^{2+\beta}+C\Delta t^2\Vert X^h(t_{m-1})-X^h_{m-1}\Vert^2_{L^2(\Omega, H)}.
\end{eqnarray}
Since the estimates of $IV$ and $VI$ are much more complicated, let us estimate $V$ first.
We use  inequality $(a+b)^2\leq 2a^2+2b^2$  to split $V$ into two terms. This yields 
\begin{eqnarray}
\label{num2aa}
V&\leq&2\Vert\sum_{k=0}^{m-2}\int_{t_{m-k-2}}^{t_{m-k-1}}e^{(A_h+J^h_{m-1})\Delta t}\cdots e^{(A_h+J^h_{m-k-1})\Delta t}e^{(A_h+J^h_{m-k-2})(t_{m-k-1}-s)}\nonumber\\
& &(G^h_{m-k-2}(X^h(s))-G^h_{m-k-2}(X^h(t_{m-k-2})))ds\Vert^2_{L^2(\Omega, H)}\nonumber\\
&+&2\Vert\sum_{k=0}^{m-2}\int_{t_{m-k-2}}^{t_{m-k-1}}e^{(A_h+J^h_{m-1})\Delta t}\cdots e^{(A_h+J^h_{m-k-1})\Delta t}e^{(A_h+J^h_{m-k-2})(t_{m-k-1}-s)}\nonumber\\
& &(G^h_{m-k-2}(X^h(t_{m-k-2}))-G^h_{m-k-2}(X^h_{m-k-2}))ds\Vert^2_{L^2(\Omega, H)}\nonumber\\
&=:&2V_1+2V_2.
\end{eqnarray}
Using  triangle inequality gives
\begin{eqnarray}
V_{1}&=&\Vert\sum_{k=0}^{m-2}\int_{t_{m-k-2}}^{t_{m-k-1}}e^{(A_h+J^h_{m-1})\Delta t}\cdots e^{(A_h+J^h_{m-k-1})\Delta t}e^{(A_h+J^h_{m-k-2})(t_{m-k-1}-s)}\nonumber\\
& &(G^h_{m-k-2}(X^h(s))-G^h_{m-k-2}(X^h(t_{m-k-2})))ds\Vert^2_{L^2(\Omega, H)}\nonumber\\
&\leq& m\sum_{k=0}^{m-2}\Vert\int_{t_{m-k-2}}^{t_{m-k-1}} e^{(A_h+J^h_{m-1})\Delta t}\cdots e^{(A_h+J^h_{m-k-1})\Delta t}e^{(A_h+J^h_{m-k-2})(t_{m-k-1}-s)}\nonumber\\
& &(G^h_{m-k-2}(X^h(s))-G^h_{m-k-2}(X^h_{m-k-2}))ds\Vert^2_{L^2(\Omega, H)}\nonumber\\
&\leq&m\sum_{k=0}^{m-2}\left(\int_{t_{m-k-2}}^{t_{m-k-1}}\left(\mathbb{E}\left[\left\Vert e^{(A_h+J^h_{m-1})\Delta t}\cdots e^{(A_h+J^h_{m-k-1})\Delta t}\right\Vert^2_{L(H)}\right.\right.\right.\nonumber\\
&& \left\Vert e^{(A_h+J^h_{m-k-2})(t_{m-k-1}-s)}\right\Vert^2_{L(H)}\nonumber\\
&&\times\left.\left.\left.\left\Vert(G^h_{m-k-2}(X^h(s))-G^h_{m-k-2}(X^h(t_{m-k-2})))\right\Vert^2\right]\right)^{1/2}ds\right)^2.
\end{eqnarray}
Using   \lemref{lemma9} with $\nu=0$ and \lemref{lemma5} yields
{\small
\begin{eqnarray}
\label{num2ab}
V_1\leq Cm\sum_{k=0}^{m-2}\left(\int_{t_{m-k-2}}^{t_{m-k-1}}\left(\mathbb{E}\Vert G^h_{m-k-2}(X^h(s))-G^h_{m-k-2}(X^h(t_{m-k-2}))\Vert^2\right)^{1/2}ds\right)^2.
\end{eqnarray}
}
Using  \lemref{lemma6} and   Lemma \ref{lemma4}, it follows from \eqref{num2ab} that 
\begin{eqnarray}
\label{use1}
V_1&\leq&Cm\sum_{k=0}^{m-2}\left(\int_{t_{m-k-2}}^{t_{m-k-1}}\left(\mathbb{E}\Vert X^h(s)-X^h(t_{m-k-2})\Vert^2\right)^{1/2}ds\right)^2\nonumber\\
&=&Cm\sum_{k=0}^{m-2}\left(\int_{t_{m-k-2}}^{t_{m-k-1}}\Vert X^h(s)-X^h(t_{m-k-2})\Vert_{L^2(\Omega,H)}ds\right)^2\nonumber\\
&\leq& mC\sum_{k=0}^{m-2}\left(\int_{t_{m-k-2}}^{t_{m-k-1}}(s-t_{m-k-2})^{\beta/2}ds\right)^2\leq C\Delta t^{\beta}.
\end{eqnarray}
Using triangle inequality,   Lemma \ref{lemma5}, Lemma \ref{lemma9} with $\nu=0$, Lemma \ref{lemma6} and  Holder's inequality yields
\begin{eqnarray}
\label{use1a}
V_2&=&\Vert\sum_{k=0}^{m-2}\int_{t_{m-k-2}}^{t_{m-k-1}}e^{(A_h+J^h_{m-1})\Delta t}\cdots e^{(A_h+J^h_{m-k-1})\Delta t}e^{(A_h+J^h_{m-k-2})(t_{m-k-1}-s)}\nonumber\\
&&(G_{m-k-2}(X^h(t_{m-k-2}))-G_{m-k-2}(X^h_{m-k-2}))ds\Vert^2_{L^2(\Omega, H)}\nonumber\\
&\leq&m\sum_{k=0}^{m-2}\Vert\int_{t_{m-k-2}}^{t_{m-k-1}}e^{(A_h+J^h_{m-2})\Delta t}\cdots e^{(A_h+J^h_{m-k-1})\Delta t}e^{(A_h+J^h_{m-k-2})(t_{m-k-1}-s)}\nonumber\\
&&(G_{m-k-2}(X^h(t_{m-k-2}))-G_{m-k-2}(X^h_{m-k-2}))ds\Vert^2_{L^2(\Omega, H)}\nonumber\\
&\leq&Cm\Delta t\sum_{k=0}^{m-2}\int_{t_{m-k-2}}^{t_{m-k-1}}\Vert X^h(t_{m-k-2})-X^h_{m-k-2}\Vert^2_{L^2(\Omega, H)}ds\nonumber\\
&\leq&C\sum_{k=0}^{m-2}\Delta t\Vert X^h(t_{m-k-2})-X^h_{m-k-2}\Vert^2_{L^2(\Omega, H)}\nonumber\\
&=&C\Delta t\sum_{k=0}^{m-2}\Vert X^h(t_k)-X^h_k\Vert^2_{L^2(\Omega, H)}.
\end{eqnarray}
Substituting \eqref{use1a} and \eqref{use1} in \eqref{num2aa} yields
\begin{eqnarray}
\label{use1b}
V\leq C\Delta t^{\beta}+C\sum_{k=0}^{m-2}\Delta t\Vert X^h(t_k)-X^h_k\Vert^2_{L^2(\Omega, H)}.
\end{eqnarray}
To estimate $VI$, we use  the triangle inequality to split it in two terms
\begin{eqnarray}
\label{esti4}
&&VI\nonumber\\
&\leq& 2\Vert\sum_{k=0}^{m-2}\int_{t_{m-k-2}}^{t_{m-k-1}}e^{(A_h+J^h_{m-1})\Delta t}\cdots e^{(A_h+J^h_{m-k-1})\Delta t}e^{(A_h+J^h_{m-k-2})(t_{m-k-1}-s)}\nonumber\\
&&\left[P_hB(X^h(s))-P_hB(X^h(t_{m-k-2}))\right]dW(s)\Vert^2_{L^2(\Omega, H)}\nonumber\\
&+&2\Vert\sum_{k=0}^{m-2}\int_{t_{m-k-2}}^{t_{m-k-1}}e^{(A_h+J^h_{m-1})\Delta t}\cdots e^{(A_h+J^h_{m-k-1})\Delta t}\nonumber\\
&&[e^{(A_h+J^h_{m-k-2})(t_{m-k-1}-s)}P_hB(X^h(t_{m-k-2}))-e^{(A_h+J^h_{m-k-2})\Delta t}P_hB(X^h_{m-k-2})]dW(s)\Vert^2_{L^2(\Omega, H)}\nonumber\\
&=:& 2VI_1+2VI_2.
\end{eqnarray}
Since the expectation of the cross-product vanishes, Using Cauchy-Schwartz inequality, it follows that 
\begin{eqnarray*}
&&VI_1\nonumber\\
&=&\mathbb{E}\left[\Vert\sum_{k=0}^{m-2}\int_{t_{m-k-2}}^{t_{m-k-1}}e^{(A_h+J^h_{m-1})\Delta t}\cdots e^{(A_h+J^h_{m-k-1})\Delta t}e^{(A_h+J^h_{m-k-2})(t_{m-k-1}-s)}\right.\nonumber\\
&&\left.\left[P_hB(X^h(s))-P_hB(X^h(t_{m-k-2}))\right]dW(s)\Vert^2\right]\nonumber\\
&=&\sum_{k=0}^{m-2}\mathbb{E}\left[\Vert e^{(A_h+J^h_{m-1})\Delta t}\cdots e^{(A_h+J^h_{m-k-1})\Delta t}e^{(A_h+J^h_{m-k-2})(t_{m-k-1}-s)}\right.\nonumber\\
&&\left.\int_{t_{m-k-2}}^{t_{m-k-1}}\left[P_hB(X^h(s))-P_hB(X^h(t_{m-k-2}))\right]dW(s)\Vert^2\right]\nonumber\\
&\leq&\sum_{k=0}^{m-2}\left(\mathbb{E}\left\Vert e^{(A_h+J^h_{m-1})\Delta t}\cdots e^{(A_h+J^h_{m-k-1})\Delta t}e^{(A_h+J^h_{m-k-2})(t_{m-k-1}-s)}\right\Vert^4_{L(H)}\right)^{\frac{1}{2}}\nonumber\\
&\times&\left(\mathbb{E}\left\Vert\int_{t_{m-k-2}}^{t_{m-k-1}}\left[P_hB(X^h(s))-P_hB(X^h(t_{m-k-2}))\right]dW(s)\right\Vert^4\right)^{\frac{1}{2}}.
\end{eqnarray*}
Using the Burkh\"{o}lder-Davis-Gundy inequality (\cite[Lemma 5.1]{Raphael}),  \lemref{lemma9} with $\nu=0$,  \assref{assumption4} and the fact that $S^h_k(\omega)$ is uniformly bounded (independently of $h$, $k$ and the sample $\omega$)
yields 
\begin{eqnarray}
\label{usemidle}
VI_1&\leq&C\sum_{k=0}^{m-2}\left(\mathbb{E}\left\Vert\int_{t_{m-k-2}}^{t_{m-k-1}}\left[P_hB(X^h(s))-P_hB(X^h(t_{m-k-2}))\right]dW(s)\right\Vert^4\right)^{\frac{1}{2}}\nonumber\\
&\leq&C\sum_{k=0}^{m-2}\int_{t_{m-k-2}}^{t_{m-k-1}}\mathbb{E}\Vert P_hB(X^h(s))-P_hB(X^h(t_{m-k-2}))\Vert^2_{L^0_2}ds\nonumber\\
&\leq&C\sum_{k=0}^{m-2}\int_{t_{m-k-2}}^{t_{m-k-1}}\Vert X^h(s)-X^h(t_{m-k-2})\Vert^2_{L^2(\Omega, H)}ds.
\end{eqnarray}
Applying \lemref{lemma4}, it follows from \eqref{usemidle} that
\begin{eqnarray}
\label{use2}
VI_1\leq C\sum_{k=0}^{m-2}\int_{t_{m-k-2}}^{t_{m-k-1}}(s-t_{m-k-2})^{\beta}ds\leq C\Delta t^{\beta}.
\end{eqnarray}
Using the inequality  $(a+b)^2\leq 2 a^2+2b^2$, we split $VI_2$ in two terms 
\begin{eqnarray}
\label{use2a}
&&VI_2\nonumber\\
&=&\Vert \sum_{k=0}^{m-2}\int_{t_{m-k-2}}^{t_{m-k-1}}e^{(A_h+J^h_{m-1})\Delta t}\cdots e^{(A_h+J^h_{m-k-1})\Delta t}\left[e^{(A_h+J^h_{m-k-2})(t_{m-k-1}-s)}\right.\nonumber\\
&&\left. P_hB(X^h(t_{m-k-2}))-e^{(A_h+J^h_{m-k-2})\Delta t}P_hB(X^h_{m-k-2})\right]dW(s)\Vert^2_{L^2(\Omega, H)}\nonumber\\
&\leq&2\Vert \sum_{k=0}^{m-2}\int_{t_{m-k-2}}^{t_{m-k-1}}e^{(A_h+J^h_{m-1})\Delta t}\cdots e^{(A_h+J^h_{m-k-1})\Delta t}\nonumber\\
&&\left[e^{(A_h+J^h_{m-k-2})(t_{m-k-1}-s)}-e^{(A_h+J^h_{m-k-2})\Delta t}\right]P_hB(X^h(t_{m-k-2}))dW(s)\Vert^2_{L^2(\Omega, H)}\nonumber\\
&+&2\Vert\sum_{k=0}^{m-2}\int_{t_{m-k-2}}^{t_{m-k-1}}e^{(A_h+J^h_{m-1})\Delta t}\cdots e^{(A_h+J^h_{m-k-1})\Delta t} e^{(A_h+J^h_{m-k-2})\Delta t}\nonumber\\
&&\left[P_hB(X^h(t_{m-k-2}))-P_hB(X^h_{m-k-2})\right]dW(s)\Vert^2_{L^2(\Omega, H)}\nonumber\\
&= :& 2VI_{21}+2VI_{22}.
\end{eqnarray}
Since the expectation of the cross-product vanishes, inserting an appropriate power of $A_h$ and using Cauchy-Schwartz inequality yields
{\small
\begin{eqnarray*}
&&VI_{21}\nonumber\\
&=&\mathbb{E}\left[\Vert \sum_{k=0}^{m-2}\int_{t_{m-k-2}}^{t_{m-k-1}}e^{(A_h+J^h_{m-1})\Delta t}\cdots e^{(A_h+J^h_{m-k-1})\Delta t}\right.\nonumber\\
&&\left.\left[e^{(A_h+J^h_{m-k-2})(t_{m-k-1}-s)}-e^{(A_h+J^h_{m-k-2})\Delta t}\right]P_hB(X^h(t_{m-k-2}))dW(s)\Vert^2\right]\nonumber\\
&=& \sum_{k=0}^{m-2}\mathbb{E}\left[\Vert e^{(A_h+J^h_{m-1})\Delta t}\cdots e^{(A_h+J^h_{m-k-1})\Delta t}\right.\nonumber\\
&&\left.\int_{t_{m-k-2}}^{t_{m-k-1}}\left[e^{(A_h+J^h_{m-k-2})(t_{m-k-1}-s)}-e^{(A_h+J^h_{m-k-2})\Delta t}\right]P_hB(X^h(t_{m-k-2}))dW(s)\Vert^2\right]\nonumber\\
&=& \sum_{k=0}^{m-2}\mathbb{E}\left[\Vert e^{(A_h+J^h_{m-1})\Delta t}\cdots e^{(A_h+J^h_{m-k-1})\Delta t}(-A_h)^{\frac{\beta}{2}}\right.\nonumber\\
&&\left.\int_{t_{m-k-2}}^{t_{m-k-1}}(-A_h)^{-\frac{\beta}{2}}\left[e^{(A_h+J^h_{m-k-2})(t_{m-k-1}-s)}-e^{(A_h+J^h_{m-k-2})\Delta t}\right]P_hB(X^h(t_{m-k-2}))dW(s)\Vert^2\right]\nonumber\\
&\leq& \sum_{k=0}^{m-2}\left(\mathbb{E}\Vert e^{(A_h+J^h_{m-1})\Delta t}\cdots e^{(A_h+J^h_{m-k-1})\Delta t}(-A_h)^{\frac{\beta}{2}}\Vert^4_{L(H)}\right)^{\frac{1}{2}}\nonumber\\
&\times&\left(\mathbb{E}\Vert\int_{t_{m-k-2}}^{t_{m-k-1}}(-A_h)^{-\frac{\beta}{2}}\left[e^{(A_h+J^h_{m-k-2})(t_{m-k-1}-s)}-e^{(A_h+J^h_{m-k-2})\Delta t}\right]P_hB(X^h(t_{m-k-2}))dW(s)\Vert^4\right)^{\frac{1}{2}}. 
\end{eqnarray*}
}
Using the Burkh\"{o}lder-Davis-Gundy inequality  yields
{\small
\begin{eqnarray}
\label{use2b}
&&VI_{21}\\
&\leq&\sum_{k=0}^{m-2}\left(\mathbb{E}\Vert e^{(A_h+J^h_{m-1})\Delta t}\cdots e^{(A_h+J^h_{m-k-1})\Delta t}(-A_h)^{\frac{\beta}{2}}\Vert^4_{L(H)}\right)^{\frac{1}{2}}\nonumber\\
&\times&\int_{t_{m-k-2}}^{t_{m-k-1}}\mathbb{E}\Vert(-A_h)^{-\frac{\beta}{2}}\left[e^{(A_h+J^h_{m-k-2})(t_{m-k-1}-s)}-e^{(A_h+J^h_{m-k-2})\Delta t}\right]P_hB(X^h(t_{m-k-2}))\Vert^2_{L^0_2(H)}ds. \nonumber
\end{eqnarray}
}
As  $S^h_k(t)$ is a semigroup, we obviously have
\begin{eqnarray}
\label{smoothnew}
 S^h_k(t+s)=S^h_k(t)S^h_k(s), \quad t,s\geq 0.
\end{eqnarray}
Using relation \eqref{smoothnew},  \lemref{lemma8} (i)  with $\gamma_1=\frac{\beta}{2}$ and $\gamma_2=0$ and  \lemref{lemma9} with $\nu=\frac{\beta}{2}$  in \eqref{use2b} allows to have 
\begin{eqnarray}
\label{case1a}
VI_{21}
&\leq&\sum_{k=0}^{m-2}\int_{t_{m-k-2}}^{t_{m-k-1}}\left(\mathbb{E}\left\Vert e^{(A_h+J^h_{m-1})\Delta t}\cdots e^{(A_h+J^h_{m-k-1})\Delta t}(-A_h)^{\frac{\beta}{2}}\right\Vert^4_{L(H)}\right)^{\frac{1}{2}}\nonumber\\
&&\times\mathbb{E}\left[\Vert (-A_h)^{\frac{-\beta}{2}}\left(\mathbf{I}-S^h_{m-k-2}(s-t_{m-k-2})\right)\Vert^2_{L(H)}\Vert S^h_{m-k-2}(t_{m-k-1}-s)\Vert^2_{L(H)}\right.\nonumber\\
&&\times \left.\Vert P_hB(X^h(t_{m-k-2}))\Vert^2_{L^0_2}\right]ds\\
&\leq& C\sum_{k=0}^{m-2}\int_{t_{m-k-2}}^{t_{m-k-1}}t_{k+1}^{-\beta}(s-t_{m-k-2})^{\beta}\Vert S^h_{m-k-2}(t_{m-k-1}-s)\Vert^2_{L(H)}\\
&&\times\Vert B(X^h(t_{m-k-2}))\Vert^2_{L^0_2}ds\nonumber.
\end{eqnarray}
  Using \assref{assumption4} and the fact that the random perturbed semigroup $S^h_{k}$ is uniformly bounded independently of $k$, $h$ and the sample $\omega$, 
  it follows from \eqref{case1a} that
  \begin{eqnarray}
  \label{case1b}
  VI_{21}&\leq& C\sum_{k=0}^{m-2}\int_{t_{m-k-2}}^{t_{m-k-1}}t_{k+1}^{-\beta}(s-t_{m-k-2})^{\beta}ds\nonumber\\
  &\leq& C\Delta t^{\beta}\sum_{k=0}^{m-2}t_{k+1}^{-\beta}\Delta t\leq C\Delta t^{\beta}.
  \end{eqnarray}
Let us estimate $VI_{22}$
\begin{eqnarray}
VI_{22}&: =&\left\Vert\sum_{k=0}^{m-2}\int_{t_{m-k-2}}^{t_{m-k-1}}e^{(A_h+J^h_{m-1})\Delta t}\cdots e^{(A_h+J^h_{m-k-1})\Delta t}e^{(A_h+J^h_{m-k-2})\Delta t}\right.\nonumber\\
&&\left.\left[P_hB(X^h(t_{m-k-2}))-P_hB(X^h_{m-k-2})\right]dW(s)\right\Vert^2_{L^2(\Omega. H)}.\nonumber\\
\end{eqnarray}
Following the same lines  as in the estimate of $VI_1$, the following estimate holds for $VI_{22}$
\begin{eqnarray}
\label{case1c}
VI_{22} &\leq&C\sum_{k=0}^{m-2}\int_{t_{m-k-2}}^{t_{m-k-1}}\Vert X^h(t_{m-k-2})-X^h_{m-k-2}\Vert^2_{L^2(\Omega, H)}ds\nonumber\\
&\leq& C\sum_{k=0}^{m-2}\Delta t\Vert X^h(t_{m-k-2})-X^h_{m-k-2}\Vert^2_{L^2(\Omega, H)}\nonumber\\
&=&C\Delta t\sum_{k=0}^{m-2}\Vert X^h(t_k)-X^h_k\Vert^2_{L^2(\Omega, H)}.
\end{eqnarray}
Inserting \eqref{case1c} and \eqref{case1b} in \eqref{use2a} gives 
\begin{eqnarray}
\label{case1d}
VI_2\leq C\Delta t^{\beta}+C\Delta t\sum_{k=0}^{m-2}\Vert X^h(t_k)-X^h_k\Vert^2_{L^2(\Omega, H)}.
\end{eqnarray}
Substituting  \eqref{use2} and \eqref{case1d}  in \eqref{esti4} yields 
\begin{eqnarray}
\label{more4c}
VI\leq C\Delta t^{\beta}+C\Delta t\sum_{k=0}^{m-2}\Vert X^h(t_k)-X^h_k\Vert^2_{L^2(\Omega, H)}.
\end{eqnarray}
Following the same lines as for the estimate of  $VI$, we obtain 
\begin{eqnarray}
\label{more4d}
IV&=&\left\Vert\int_{t_{m-1}}^{t_m}\left[e^{(A_h+J^h_{m-1})(t_m-s)}P_hB(X^h(s))\right.\right.\nonumber\\
&&\left.\left.-e^{(A_h+J^h_{m-1})\Delta t}P_hB(X^h_{m-1})\right]dW(s)\right\Vert^2_{L^2(\Omega, H)}\nonumber\\
&\leq& C\Delta t^{\beta}+C\Delta t\Vert X^h(t_{m-1})-X^h_{m-1}\Vert^2_{L^2(\Omega, H)}.
\end{eqnarray}
Gathering estimates of $III$, $IV$, $V$ and $VI$  in \eqref{somme1} yields  
\begin{eqnarray}
\label{above1}
\Vert X^h(t_m)-X^h_m\Vert^2_{L^2(\Omega, H)}&\leq& C\Delta t^{\beta}+C\Delta t\sum_{k=0}^{m-1}\Vert X^h(t_k)-X^h_k\Vert^2_{L^2(\Omega, H)}.
\end{eqnarray}
Applying the discrete Gronwall lemma to \eqref{above1}  yields 
\begin{eqnarray}
\label{final}
\Vert X^h(t_m)-X^h_m\Vert_{L^2(\Omega, H)}\leq  C\Delta t^{\beta/2}.
\end{eqnarray}
 Using \lemref{lemma7} together with the estimate \eqref{final} completes the proof of  \thmref{mainresult1} for $0\leq\beta<1$.
\subsubsection{Estimate of the time error for $1\leq \beta\leq 2$}
\label{secondestimate}
Note that the estimates of $III$ and $V$  in Section \ref{firstestimate} hold for $\beta\in[1,2]$ and due to the time regularity in \lemref{lemma4}, we obtain from \eqref{use1b} and \eqref{num2aaaa}
\begin{eqnarray}
\label{stand1}
III+V\leq C\Delta t+C\Delta t \sum_{k=0}^{m-1}\Vert X^h(t_k)-X^h_k\Vert^2_{L^2(\Omega,H)}.
\end{eqnarray} 
We only need to re-estimate $IV$ and $VI$. We will only estimate $VI$ in details since the the estimate of $IV$ is similar to that of $VI$. Let us recall that using triangle inequality we obtain
\begin{eqnarray}
\label{new1}
VI\leq 2VI_1+2VI_2,
\end{eqnarray}
 where $VI_1$ and $VI_2$ are defined by \eqref{esti4} in Section \ref{firstestimate}. Applying \lemref{lemma4}, it follows from \eqref{usemidle} that 
 \begin{eqnarray}
 \label{new2}
 VI_1\leq C\sum_{k=0}^{m-2}\int_{t_{m-k-2}}^{t_{m-k-1}}(s-t_{m-k-2})ds\leq C\Delta t.
 \end{eqnarray}
 Using   the  inequality  $(a+b)^2\leq 2 a^2+2b^2$, we split $VI_2$ in two terms 
\begin{eqnarray}
\label{case2a}
VI_2\leq VI_{21}+VI_{22},
\end{eqnarray}
where $VI_{21}$ and $VI_{22}$ are given by \eqref{use2a} in Subsection \ref{firstestimate}. We recall that from \eqref{case1c} the following estimate holds for $VI_{22}$
\begin{eqnarray}
\label{case2aa}
VI_{22}\leq C\Delta t\sum_{k=0}^{m-2}\Vert X^h(t_k)-X^h_k\Vert^2_{L^2(\Omega,H)}.
\end{eqnarray}
Since the expectation of the cross product vanishing, inserting an appropriate power of $A_h$ and using Cauchy-Schwartz inequality yields
{\small
\begin{eqnarray*}
&&VI_{21}\nonumber\\
&=&\mathbb{E}\left\Vert\sum_{k=0}^{m-2}\int_{t_{m-k-2}}^{t_{m-k-1}} e^{(A_h+J^h_{m-1})\Delta t}\cdots e^{(A_h+J^h_{m-k-1})\Delta t}\right.\nonumber\\
&&\left.\left[e^{(A_h+J^h_{m-k-2})(t_{m-k-1}-s)}-e^{(A_h+J^h_{m-k-2})\Delta t}\right] P_hB(X^h(t_{m-k-2}))dW(s)\right\Vert^2\nonumber\\
&=&\sum_{k=0}^{m-2}\mathbb{E}\left\Vert e^{(A_h+J^h_{m-1})\Delta t}\cdots e^{(A_h+J^h_{m-k-1})\Delta t}\right.\nonumber\\
&&\left.\int_{t_{m-k-2}}^{t_{m-k-1}}\left[e^{(A_h+J^h_{m-k-2})(t_{m-k-1}-s)}-e^{(A_h+J^h_{m-k-2})\Delta t}\right] P_hB(X^h(t_{m-k-2}))dW(s)\right\Vert^2\nonumber\\
&=&\sum_{k=0}^{m-2}\mathbb{E}\left\Vert e^{(A_h+J^h_{m-1})\Delta t}\cdots e^{(A_h+J^h_{m-k-1})\Delta t}(-A_h)^{\frac{1-\gamma}{2}}\right.\nonumber\\
&&\left.\int_{t_{m-k-2}}^{t_{m-k-1}}(-A_h)^{\frac{-1+\gamma}{2}}\left[e^{(A_h+J^h_{m-k-2})(t_{m-k-1}-s)}-e^{(A_h+J^h_{m-k-2})\Delta t}\right] P_hB(X^h(t_{m-k-2}))dW(s)\right\Vert^2\nonumber\\
&\leq&\sum_{k=0}^{m-2}\left(\mathbb{E}\left\Vert e^{(A_h+J^h_{m-1})\Delta t}\cdots e^{(A_h+J^h_{m-k-1})\Delta t}(-A_h)^{\frac{1-\gamma}{2}}\right\Vert^4_{L(H)}\right)^{\frac{1}{2}}\nonumber\\
&\times&\left(\mathbb{E}\left\Vert\int_{t_{m-k-2}}^{t_{m-k-1}}(-A_h)^{\frac{-1+\gamma}{2}}\left[e^{(A_h+J^h_{m-k-2})(t_{m-k-1}-s)}-e^{(A_h+J^h_{m-k-2})\Delta t}\right] P_hB(X^h(t_{m-k-2}))dW(s)\right\Vert^4\right)^{\frac{1}{2}}.
\end{eqnarray*}
}
Using  Burkh\"{o}lder-Davis-Gundy inequality and  the triangle inequality,   we  split $VI_{21}$ in two parts as 
{\small
\begin{eqnarray}
\label{case2b}
&&VI_{21}\nonumber\\
&\leq&\sum_{k=0}^{m-2}\int_{t_{m-k-2}}^{t_{m-k-1}}\left(\mathbb{E}\left\Vert e^{(A_h+J^h_{m-1})\Delta t}\cdots e^{(A_h+J^h_{m-k-1})\Delta t}(-A_h)^{\frac{1-\gamma}{2}}\right\Vert^4_{L(H)}\right)^{\frac{1}{2}}\nonumber\\
&\times&\int_{t_{m-k-2}}^{t_{m-k-1}}\mathbb{E}\left\Vert(-A_h)^{\frac{-1+\gamma}{2}}\left[e^{(A_h+J^h_{m-k-2})(t_{m-k-1}-s)}-e^{(A_h+J^h_{m-k-2})\Delta t}\right] P_hB(X^h(t_{m-k-2}))\right\Vert^2_{L^0_2}ds\nonumber\\
&\leq& 2\sum_{k=0}^{m-1}\left(\mathbb{E}\left\Vert e^{(A_h+J^h_{m-1})\Delta t}\cdots e^{(A_h+J^h_{m-k-1})\Delta t}(-A_h)^{\frac{1-\gamma}{2}}\right\Vert^4_{L(H)}\right)^{\frac{1}{2}}\nonumber\\
&\times&\int_{t_{m-k-2}}^{t_{m-k-1}}\mathbb{E}\left\Vert\left[e^{(A_h+J^h_{m-k-2})(t_{m-k-1}-s)}-e^{(A_h+J^h_{m-k-2})\Delta t}\right]\left[P_hB(X^h(t_{m-k-2}))-P_hB(X(t_{m-k-2}))\right]\right\Vert^2_{L^0_2}ds\nonumber\\
&+&2\sum_{k=0}^{m-2} \left(\mathbb{E}\left\Vert e^{(A_h+J^h_{m-1})\Delta t}\cdots e^{(A_h+J^h_{m-k-1})\Delta t}(-A_h)^{\frac{1-\gamma}{2}}\right\Vert^4_{L(H)}\right)^{\frac{1}{2}}\nonumber\\
&\times&\int_{t_{m-k-2}}^{t_{m-k-1}}\mathbb{E}\left\Vert\left[e^{(A_h+J^h_{m-k-2})(t_{m-k-1}-s)}-e^{(A_h+J^h_{m-k-2})\Delta t}\right]P_hB(X(t_{m-k-2}))\right\Vert^2_{L^0_2}ds\nonumber\\
&:=& 2VI_{211}+2VI_{212}.
\end{eqnarray}
}
Using \lemref{lemma5}  and \assref{assumption4} yields
{ \small {
\begin{eqnarray}
\label{case2c}
&&VI_{211}\\
&\leq&C\sum_{k=0}^{m-2}\left(\mathbb{E}\left\Vert e^{(A_h+J^h_{m-1})\Delta t}\cdots e^{(A_h+J^h_{m-k-1})\Delta t}(-A_h)^{\frac{1-\gamma}{2}}\right\Vert^4_{L(H)}\right)^{\frac{1}{2}}\nonumber\\
&\times&\int_{t_{m-k-2}}^{t_{m-k-1}}\left[\left\Vert(-A_h)^{\frac{-1+\gamma}{2}}\left( e^{(A_h+J^h_{m-k-2})(t_{m-k-1}-s)}-e^{(A_h+J^h_{m-k-2})\Delta t}\right)\right\Vert^2_{L(H)}\Vert X(t_{m-k-2})-X^h(t_{m-k-2})\Vert^2\right]ds.\nonumber
\end{eqnarray}
}}
 Using \lemref{lemma7} and \lemref{lemma9} with $\nu=0$, we obtain
  \begin{eqnarray}
  \label{case2c}
 VI_{211}&\leq& C\sum_{k=0}^{m-2}t_{k+1}^{-1+\gamma}\int_{t_{m-k-2}}^{t_{m-k-1}}\mathbb{E}\Vert X(t_{m-k-2})-X^h(t_{m-k-2})\Vert^2ds\nonumber\\
 &\leq& Ch^{2\beta}.
\end{eqnarray}
Inserting an appropriated power of $-A_h$ yields the following estimate
\begin{eqnarray}
\label{case2d}
&&VI_{212}\nonumber\\
&\leq&\sum_{k=0}^{m-2}\left(\mathbb{E}\Vert e^{(A_h+J^h_{m-1})\Delta t}\cdots e^{(A_h+J^h_{m-k-1})\Delta t}(-A_h)^{\frac{1-\gamma}{2}}\Vert^4_{L(H)}\right)^{\frac{1}{2}}\nonumber\\
&\times&\int_{t_{m-k-2}}^{t_{m-k-1}}\mathbb{E}\left[\Vert(-A_h)^{\frac{-1+\gamma}{2}}\left(e^{(A_h+J^h_{m-k-2})(t_{m-k-1}-s)}-e^{(A_h+J^h_{m-k-2})\Delta t}\right)(-A_h)^{\frac{-\gamma}{2}}\Vert^2_{L(H)}\right.\nonumber\\
&&\times\left.\Vert (-A_h)^{\frac{\gamma}{2}}P_hB(X(t_{m-k-2}))\Vert^2_{L^0_2}\right]ds.
\end{eqnarray}
Using \lemref{lemma9} with $\nu=\frac{1-\gamma}{2}$ in \eqref{case2d}, we obatin  
\begin{eqnarray}
 \label{case2e}
 &&VI_{212}\nonumber\\
 &\leq&C\sum_{k=0}^{m-2}\int_{t_{m-k-2}}^{t_{m-k-1}}t_{k+1}^{-1+\gamma}\mathbb{E}\left[\Vert(-A_h)^{\frac{1+\gamma}{2}}\left(e^{(A_h+J^h_{m-k-2})(t_{m-k-1}-s)}-e^{(A_h+J^h_{m-k-2})\Delta t}\right)\right.\nonumber\\
 &&\times\left.(-A_h)^{\frac{-\gamma}{2}}\Vert^2_{L(H)}\Vert(-A_h)^{\frac{\gamma}{2}}P_hB(X(t_{m-k-2}))\Vert^2_{L^0_2}\right]ds\nonumber\\
 &\leq& C\sum_{k=0}^{m-2}t_{k+1}^{-1+\gamma}\int_{t_{m-k-2}}^{t_{m-k-1}}\mathbb{E}\left[\left\Vert(-A_h)^{\frac{-1+\gamma}{2}}\left[e^{(A_h+J^h_{m-k-2})(t_{m-k-1}-s)}\right.\right.\right.\nonumber\\
&&\left.\left.\left.-e^{(A_h+J^h_{m-k-2})\Delta t}\right](-A_h)^{-\frac{\gamma}{2}}\right\Vert^2_{L(H)} \Vert(-A_h)^{\frac{\gamma}{2}}P_hB(X(t_{m-k-2}))\Vert^2_{L^0_2}\right]ds.\nonumber\\
\end{eqnarray}
Using relation \eqref{smoothnew}, \lemref{lemma8} (i) with $\gamma_1=\frac{1-\gamma}{2}$ and $\gamma_2=\frac{\gamma}{2}$  yields 
\begin{eqnarray}
\label{case2ea}
VI_{212}&\leq& C\sum_{k=0}^{m-2}t_{k+1}^{-1+\gamma}\int_{t_{m-k-2}}^{t_{m-k-1}}\mathbb{E}\left[\Vert (-A_h)^{\frac{-1+\gamma}{2}} S^h_{m-k-2}(t_{m-k-1}-s)\right.\nonumber\\
&&\left.(S^h_{m-k-2}(s-t_{m-k-2})-\mathbf{I})(-A_h)^{-\frac{\gamma}{2}}\Vert^2_{L(H)}\Vert(-A_h)^{\frac{\gamma}{2}}P_hB(X(t_{m-k-2}))\Vert^2_{L^0_2}\right] ds\nonumber\\
&\leq& C\sum_{k=0}^{m-2}\left(t_{k+1}^{-1+\gamma}\int_{t_{m-k-2}}^{t_{m-k-1}}\mathbb{E}\left[\left\Vert (-A_h)^{\frac{-1+\gamma}{2}} S^h_{m-k-2}(t_{m-k-1}-s)(-A_h)^{\frac{1-\gamma}{2}}\right\Vert^2_{L(H)}\right.\right.\nonumber\\
&\times&\left\Vert(-A_h)^{\frac{-1+\gamma}{2}}(S^h_{m-k-2}(s-t_{m-k-2})-\mathbf{I})(-A_h)^{-\frac{\gamma}{2}}\right\Vert^2_{L(H)}\nonumber\\
&&\left.\left.\Vert(-A_h)^{\frac{\gamma}{2}}P_hB(X(t_{m-k-2}))\Vert^2_{L^0_2}\right] ds\right)\nonumber\\
&\leq&C\sum_{k=0}^{m-2}t^{-1+\gamma}_{k+1}\int_{t_{m-k-2}}^{t_{m-k-1}}(s-t_{m-k-2})\mathbb{E}\Vert(-A_h)^{\frac{\gamma}{2}}P_hB(X(t_{m-k-2}))\Vert^2_{L^0_2} ds\nonumber\\
&\leq&C\Delta t^{2}\sum_{k=0}^{m-2}t_{k+1}^{-1+\gamma}\mathbb{E}\Vert(-A_h)^{\frac{\gamma}{2}}P_hB(X(t_{m-k-2}))\Vert^2_{L^0_2}.
 \end{eqnarray}
Using the definition of the $L^0_2$ norm, \assref{assumption5}, \lemref{lemma1}, \thmref{theorem1} and   estimate \eqref{regular1a}, we obtain
\begin{eqnarray}
\label{case2f}
\mathbb{E}\Vert (-A_h)^{\frac{\gamma}{2}}P_hB(X(t_{m-k-2}))\Vert^2_{L^0_2}&=&\mathbb{E}\left[\sum_{i=0}^{\infty}\Vert (-A_h)^{\frac{\gamma}{2}}P_hB(X(t_{m-k-2}))Q^{1/2}e_i\Vert^2\right]\nonumber\\
&\leq&C\mathbb{E}\left[\sum_{i=0}^{\infty}\Vert (-A)^{\frac{\gamma}{2}}B(X(t_{m-k-2}))Q^{1/2}e_i\Vert^2\right]\nonumber\\
&=&C\mathbb{E}\Vert (-A)^{\frac{\gamma}{2}}B(X(t_{m-k-2})\Vert^2_{L^0_2}\nonumber\\
&\leq& C\mathbb{E}(1+\Vert (-A)^{\frac{\gamma}{2}}X(t_{m-k-2})\Vert^2)\nonumber\\
&\leq& C(1+\mathbb{E}(\Vert X_0\Vert^2_{\gamma}))<\infty.
\end{eqnarray}
Substituting \eqref{case2f} in \eqref{case2ea} yields
\begin{eqnarray}
\label{case2g}
VI_{212}\leq C\Delta t^{2}\sum_{k=0}^{m-2}t_{k+1}^{-1+\gamma}\leq C\Delta t.
\end{eqnarray}
Inserting \eqref{case2g} and \eqref{case2c} in \eqref{case2b} gives 
\begin{eqnarray}
\label{case2h}
VI_{21}\leq Ch^{2\beta}+C\Delta t.
\end{eqnarray}
Inserting \eqref{case2h} and \eqref{case2aa} in \eqref{case2a} gives 
\begin{eqnarray}
\label{case2i}
VI_2\leq Ch^{2\beta}+C\Delta t+C\Delta t\sum_{k=0}^{m-2}\Vert X^h(t_k)-X^h_k\Vert^2_{L^2(\Omega, H)}.
\end{eqnarray}
Substituting estimates of $VI_2$ \eqref{case2i} and $VI_1$ \eqref{new2}  in \eqref{new1} yields 
\begin{eqnarray}
\label{case2j}
VI\leq Ch^{2\beta}+C\Delta t+C\Delta t\sum_{k=0}^{m-2}\Vert X^h(t_k)-X^h_k\Vert^2_{L^2(\Omega, H)}.
\end{eqnarray}
Following the same lines as for $VI$, we obtain 
\begin{eqnarray}
\label{case2k}
IV&=&\left\Vert\int_{t_{m-1}}^{t_m}\left[e^{(A_h+J^h_{m-1})(t_m-s)}P_hB(X^h(s))\right.\right.\nonumber\\
&&\left.\left.-e^{(A_h+J^h_{m-1})\Delta t}P_hB(X^h_{m-1})\right]dW(s)\right\Vert^2_{L^2(\Omega, H)}\nonumber\\
&\leq& Ch^{2\beta}+C\Delta t+C\Delta t\Vert X^h(t_{m-1})-X^h_{m-1}\Vert^2_{L^2(\Omega, H)}.
\end{eqnarray}
Substituting \eqref{stand1}, \eqref{case2j} and \eqref{case2k} in   \eqref{somme1} yields 
\begin{eqnarray}
\label{case2ka}
&&\Vert X^h(t_m)-X^h_m\Vert^2_{L^2(\Omega,H)}\nonumber\\
&\leq& Ch^{2\beta}+C\Delta t+C\Delta t\sum_{k=0}^{m-1}\Vert X^h(t_k)-X^h_k\Vert^2_{L^2(\Omega,H)}.
\end{eqnarray}
Applying the discrete Gronwall lemma to \eqref{case2ka} yields
\begin{eqnarray}
\label{case2m}
\Vert X^h(t_m)-X^h_m\Vert_{L^2(\Omega,H)}\leq Ch^{\beta}+C\Delta t^{1/2}.
\end{eqnarray}

\subsection{Proof of \thmref{mainresult2}}
\label{theofinal}
Recall that we only need to estimate the time error since the space error is estimated in \lemref{lemma7}. Recall also that the time error can be recast as follow 
\begin{eqnarray}
\label{decom1}
\frac{1}{4}\Vert X^h(t_m)-X^h_m\Vert^2_{L^2(\Omega, H)}\leq III+IV+V+VI,
\end{eqnarray}
where $III$ and $V$ the same as in Section \ref{firstestimate}.
 The terms involving the noise $ IV$ and $VI$ are in this case given by
\begin{eqnarray}
\label{noise1}
IV=\left\Vert\int_{t_{m-1}}^{t_m}\left(e^{(A_h+J^h_{m-1})(t_m-s)}-e^{(A_h+J^h_{m-1})\Delta t}\right)P_hdW(s)\right\Vert^2_{L^2(\Omega, H)}
\end{eqnarray}
and
\begin{eqnarray}
\label{noise2}
VI&=&\left\Vert\sum_{k=0}^{m-2}\int_{t_{m-k-2}}^{t_{m-k-1}}e^{(A_h+J^h_{m-1})\Delta t}\cdots e^{(A_h+J^h_{m-k-1})\Delta t}\right.\nonumber\\
&&\left.\left(e^{(A_h+J^h_{m-k-2})(t_{m-k-1}-s)}-e^{(A_h+J^h_{m-k-1})\Delta t}\right)P_hdW(s)\right\Vert^2_{L^2(\Omega, H)}.
\end{eqnarray}
Recall that from  \eqref{num2aaaa}  we have
\begin{eqnarray}
\label{decom2}
III\leq C\Delta t^{2}+C\Delta t^2\Vert X^h(t_{m-1})-X^h_{m-1}\Vert^2_{L^2(\Omega, H)}.
\end{eqnarray}
It remains to estimate $V$, $IV$ and $VI$. Let us recall that from \eqref{num2aa} we have 
\begin{eqnarray}
\label{revi1}
V\leq 2V_1+2V_2,
\end{eqnarray}
where from \eqref{use1a} we have 
\begin{eqnarray}
\label{revi1a}
V_2\leq C\Delta t\sum_{k=0}^{m-2}\Vert X^h(t_k)-X^h_k\Vert^2_{L^2(\Omega,H)}.
\end{eqnarray}
Let us recall that from \eqref{num2aa} we have 
\begin{eqnarray}
\label{revi2}
\sqrt{V_1}&=& \left\Vert\sum_{k=0}^{m-2}\int_{t_{m-k-2}}^{t_{m-k-1}}e^{(A_h+J^h_{m-1})\Delta t}\cdots e^{(A_h+J^h_{m-k-1})\Delta t}e^{(A_h+J^h_{m-k-2})(t_{m-k-1}-s)}\right.\nonumber\\
&&.\left.(G^h_{m-k-2}(X^h(s))-G^h_{m-k-2}(X^h(t_{m-k-2})))ds\right\Vert_{L^2(\Omega,H)}.
\end{eqnarray}
Using the Taylor formula in Banach space yields 
\begin{eqnarray}
\label{revi3}
\sqrt{V_1}&\leq& \left\Vert\sum_{k=0}^{m-2}\int_{t_{m-k-2}}^{t_{m-k-1}}e^{(A_h+J^h_{m-1})\Delta t}\cdots e^{(A_h+J^h_{m-k-1})\Delta t}e^{(A_h+J^h_{m-k-2})(t_{m-k-1}-s)}\right.\nonumber\\
&&.\left.(G^h_{m-k-2})'(X^h(t_{m-k-2}))\left(e^{(A_h+J^h_{m-k-2})(s-t_{m-k-2})}-\mathbf{I}\right)X^h(t_{m-k-2})ds\right\Vert_{L^2(\Omega,H)}\nonumber\\
&+& \left\Vert\sum_{k=0}^{m-2}\int_{t_{m-k-2}}^{t_{m-k-1}}e^{(A_h+J^h_{m-1})\Delta t}\cdots e^{(A_h+J^h_{m-k-1})\Delta t}e^{(A_h+J^h_{m-k-2})(t_{m-k-1}-s)}\right.\nonumber\\
&&\left.(G^h_{m-k-2})'(X^h(t_{m-k-2}))\int_{t_{m-k-2}}^se^{(A_h+J^h_{m-k-2})(s-\sigma)}G^h_{m-k-2}(X^h(\sigma))d\sigma ds\right\Vert_{L^2(\Omega,H)}\nonumber\\
&+& \left\Vert\sum_{k=0}^{m-2}\int_{t_{m-k-2}}^{t_{m-k-1}}e^{(A_h+J^h_{m-1})\Delta t}\cdots e^{(A_h+J^h_{m-k-1})\Delta t}e^{(A_h+J^h_{m-k-2})(t_{m-k-1}-s)}\right.\nonumber\\
&&.\left.(G^h_{m-k-2})'(X^h(t_{m-k-2}))\int_{t_{m-k-2}}^se^{(A_h+J^h_{m-k-2})(s-\sigma)}P_hdW(\sigma)ds\right\Vert_{L^2(\Omega,H)}\nonumber\\
&+& \left\Vert\sum_{k=0}^{m-2}\int_{t_{m-k-2}}^{t_{m-k-1}}e^{(A_h+J^h_{m-1})\Delta t}\cdots e^{(A_h+J^h_{m-k-1})\Delta t}e^{(A_h+J^h_{m-k-2})(t_{m-k-1}-s)}R_{G^h_{m-k-2}}\right\Vert_{L^2(\Omega,H)}\nonumber\\
&:=&\sqrt{V_{11}}+\sqrt{V_{12}}+\sqrt{V_{13}}+\sqrt{V_{14}},
\end{eqnarray}
where 
{\small
\begin{eqnarray}
\label{revi4}
R_{G^h_{m-k-2}}:=\int_0^1(G^h_{m-k-2})''\left(X^h(t_{m-k-2})+\lambda(X^h(s)-X^h(t_{m-k-2}))\right)\left(X^h(s)-X^h(t_{m-k-2}), X^h(s)-X^h(t_{m-k-2})\right)(1-\lambda)d\lambda.\nonumber
\end{eqnarray}
}
Using  triangle inequality yields 
{\small
\begin{eqnarray}
\label{revi5}
\sqrt{V_{11}}&\leq&\sum_{k=0}^{m-2}\int_{t_{m-k-2}}^{t_{m-k-1}}\left\Vert e^{(A_h+J^h_{m-1})\Delta t}\cdots e^{(A_h+J^h_{m-k-1})\Delta t}e^{(A_h+J^h_{m-k-2})(t_{m-k-1}-s)}\right.\\
&&.\left.(G^h_{m-k-2})'(X^h(t_{m-k-2}))\left(e^{(A_h+J^h_{m-k-2})(s-t_{m-k-2})}-\mathbf{I}\right)X^h(t_{m-k-2})\right\Vert_{L^2(\Omega,H)}ds\nonumber\\
&\leq& \sum_{k=0}^{m-2}\int_{t_{m-k-2}}^{t_{m-k-1}}\left(\mathbf{E}\left[\left\Vert e^{(A_h+J^h_{m-1})\Delta t}\cdots e^{(A_h+J^h_{m-k-1})\Delta t}e^{(A_h+J^h_{m-k-2})(t_{m-k-1}-s)}\right\Vert^2_{L(H)}\right.\right.\nonumber\\
&&\times\left.\left.\left\Vert(G^h_{m-k-2})'(X^h(t_{m-k-2}))\left(e^{(A_h+J^h_{m-k-2})(s-t_{m-k-2})}-\mathbf{I}\right)X^h(t_{m-k-2})\right\Vert^2\right]\right)^{1/2}ds.\nonumber
\end{eqnarray}
}
Using \lemref{lemma9} with $\nu=0$ and \lemref{lemma8} (ii) with $\gamma_1=0$, it holds that
\begin{eqnarray}
\label{revi6}
&&\left\Vert e^{(A_h+J^h_{m-1}(\omega))\Delta t}\cdots e^{(A_h+J^h_{m-k-1}(\omega))\Delta t}e^{(A_h+J^h_{m-k-2}(\omega))(t_{m-k-1}-s)}\right\Vert_{L(H)}\nonumber\\
&\leq&\left\Vert e^{(A_h+J^h_{m-1}(\omega))\Delta t}\cdots e^{(A_h+J^h_{m-k-1}(\omega))\Delta t}\right\Vert_{L(H)}\left\Vert e^{(A_h+J^h_{m-k-2}(\omega))(t_{m-k-1}-s)}\right\Vert_{L(H)}\nonumber\\
&\leq& C,
\end{eqnarray}
for all $\omega\in\Omega$.
Substituting \eqref{revi6} in \eqref{revi5}, employing \lemref{lemma11}, inserting an appropriate power of $-A_h$, using \lemref{lemma8} (i) with $\gamma_1=0$ and $\gamma_2=\frac{\beta}{2}-\epsilon$, and \lemref{lemma3}  yields 
{\small
\begin{eqnarray}
\label{revi7}
\sqrt{V_{11}}&\leq& C\sum_{k=0}^{m-2}\int_{t_{m-k-2}}^{t_{m-k-1}}\left\Vert(G^h_{m-k-2})'(X^h(t_{m-k-2}))\left(e^{(A_h+J^h_{m-k-2})(s-t_{m-k-2})}-\mathbf{I}\right)X^h(t_{m-k-2})\right\Vert_{L^2(\Omega,H)}\nonumber\\
&\leq& C\sum_{k=0}^{m-2}\int_{t_{m-k-2}}^{t_{m-k-1}}\left\Vert\left(e^{(A_h+J^h_{m-k-2})(s-t_{m-k-2})}-\mathbf{I}\right)X^h(t_{m-k-2})\right\Vert_{L^2(\Omega,H)}\nonumber\\
&\leq& C\sum_{k=0}^{m-2}\int_{t_{m-k-2}}^{t_{m-k-1}}\left\Vert\left(e^{(A_h+J^h_{m-k-2})(s-t_{m-k-2})}-\mathbf{I}\right)(-A_h)^{-\frac{\beta}{2}+\epsilon}(-A_h)^{\frac{\beta}{2}-\epsilon} X^h(t_{m-k-2})\right\Vert_{L^2(\Omega,H)}\nonumber\\
&\leq& C \sum_{k=0}^{m-2}\int_{t_{m-k-2}}^{t_{m-k-1}}(s-t_{m-k-2})^{\frac{\beta}{2}-\epsilon}\left\Vert(-A_h)^{\frac{\beta}{2}-\epsilon} X^h(t_{m-k-2})\right\Vert_{L^2(\Omega,H)}ds\nonumber\\
&\leq& C \sum_{k=0}^{m-2}\int_{t_{m-k-2}}^{t_{m-k-1}}(s-t_{m-k-2})^{\frac{\beta}{2}-\epsilon}ds\leq C\Delta t^{\frac{\beta}{2}-\epsilon}.
\end{eqnarray}
}
Using the triangle inequality, \lemref{lemma11}, \lemref{lemma6}, \lemref{lemma3}, \lemref{lemma9} and \lemref{lemma8}, it holds that
{\small
\begin{eqnarray}
\label{revi8}
\sqrt{V_{12}}&\leq&\sum_{k=0}^{m-2}\int_{t_{m-k-2}}^{t_{m-k-1}}\left\Vert e^{(A_h+J^h_{m-1})\Delta t}\cdots e^{(A_h+J^h_{m-k-1})\Delta t}e^{(A_h+J^h_{m-k-2})(t_{m-k-1}-s)}\right.\nonumber\\
&&\left.(G^h_{m-k-2})'(X^h(t_{m-k-2}))\int_{t_{m-k-2}}^se^{(A_h+J^h_{m-k-2})(s-\sigma)}G^h_{m-k-2}(X^h(\sigma))d\sigma\right\Vert_{L^2(\Omega,H)}ds\nonumber\\
&\leq& \sum_{k=0}^{m-2}\int_{t_{m-k-2}}^{t_{m-k-1}}\left(\mathbb{E}\left[\left\Vert e^{(A_h+J^h_{m-1})\Delta t}\cdots e^{(A_h+J^h_{m-k-1})(t_{m-k-1}-s)}\right\Vert^2_{L(H)}\right.\right.\nonumber\\
&&\times \left\Vert e^{(A_h+J^h_{m-k-2})(t_{m-k-1}-s)}\right\Vert^2_{L(H)}\left.\left.\left\Vert\int_{t_{m-k-2}}^se^{(A_h+J^h_{m-k-2})(s-\sigma)}G^h_{m-k-2}(X^h(\sigma))d\sigma\right\Vert^2_{L(H)}\right]\right)^{1/2}ds\nonumber\\
&\leq& C\sum_{k=0}^{m-2}\int_{t_{m-k-2}}^{t_{m-k-1}}\left(\mathbb{E}\left[\int_{t_{m-k-2}}^s\left\Vert e^{(A_h+J^h_{m-k-2})(s-\sigma)}\right\Vert_{L(H)}\Vert G^h_{m-k-2}(X^h(\sigma))\Vert d\sigma\right]^2\right)^{1/2}ds\nonumber\\
&\leq& C\Delta t.
\end{eqnarray}
Since the expectation of the cross-product terms vanishes, Cauchy-Schwartz inequality yields
{\small
\begin{eqnarray*}
&&V_{13}\nonumber\\
&=&\mathbb{E}\left[\left\Vert\sum_{k=0}^{m-2}\int_{t_{m-k-2}}^{t_{m-k-1}}e^{(A_h+J^h_{m-1})\Delta t}\cdots e^{(A_h+J^h_{m-k-1})\Delta t}e^{(A_h+J^h_{m-k-2})(t_{m-k-1}-s)}\right.\right.\nonumber\\
&&\left.\left.(G^h_{m-k-2})'(X^h(t_{m-k-2}))\int_{t_{m-k-2}}^se^{(A_h+J^h_{m-k-2})(s-\sigma)}P_hdW(\sigma)ds\right\Vert^2\right]\nonumber\\
&=&\sum_{k=0}^{m-2}\mathbb{E}\left[\left\Vert\int_{t_{m-k-2}}^{t_{m-k-1}}\int_{t_{m-k-2}}^s e^{(A_h+J^h_{m-1})\Delta t}\cdots e^{(A_h+J^h_{m-k-1})\Delta t}e^{(A_h+J^h_{m-k-2})(t_{m-k-1}-s)}\right.\right.\nonumber\\
&&\left.\left. (G^h_{m-k-2})'(X^h(t_{m-k-2}))e^{(A_h+J^h_{m-k-2})(s-\sigma)}P_hdW(\sigma)ds\right\Vert^2\right]\nonumber\\
&\leq&\Delta t\sum_{k=0}^{m-2}\int_{t_{m-k-2}}^{t_{m-k-1}}\mathbb{E}\left[\left\Vert e^{(A_h+J^h_{m-1})\Delta t}\cdots e^{(A_h+J^h_{m-k-1})\Delta t}e^{(A_h+J^h_{m-k-2})(t_{m-k-1}-s)}\right\Vert^2_{L(H)}\right.\nonumber\\
&\times&\left.\left\Vert \int_{t_{m-k-2}}^s(G^h_{m-k-2})'(X^h(t_{m-k-2})) e^{(A_h+J^h_{m-k-2})(s-\sigma)}P_hdW(\sigma)\right\Vert^2\right] ds.
\end{eqnarray*}
}
Using again Cauchy-Schwartz inequality, it follows that
{\small
\begin{eqnarray*}
&&V_{13}\nonumber\\
&\leq&\Delta t\sum_{k=0}^{m-2}\int_{t_{m-k-2}}^{t_{m-k-1}}\left(\mathbb{E}\left\Vert e^{(A_h+J^h_{m-1})\Delta t}\cdots e^{(A_h+J^h_{m-k-1})\Delta t}e^{(A_h+J^h_{m-k-2})(t_{m-k-1}-s)}\right\Vert^4_{L(H)}\right)^{\frac{1}{2}}\nonumber\\
&\times&\left(\mathbb{E}\left\Vert\int_{t_{m-k-2}}^s (G^h_{m-k-2})'(X^h(t_{m-k-2})) e^{(A_h+J^h_{m-k-2})(s-\sigma)}P_hdW(\sigma)\right\Vert^4\right)^{\frac{1}{2}} ds.
\end{eqnarray*}
}
}
Using the Burkh\"{o}lder-Davis-Gundy inequality (\cite[Lemma 5.1]{Raphael}), \lemref{lemma9} and \lemref{lemma8}, it holds that
{\small
\begin{eqnarray*}
\label{revi9}
&&V_{13}\nonumber\\
&\leq&C\Delta t\sum_{k=0}^{m-2}\int_{t_{m-k-2}}^{t_{m-k-1}}\left(\left\Vert e^{(A_h+J^h_{m-1})\Delta t}\cdots e^{(A_h+J^h_{m-k-1})\Delta t}e^{(A_h+J^h_{m-k-2})(t_{m-k-1}-s)}\right\Vert^4_{L(H)}\right)^{\frac{1}{2}}\nonumber\\
&\times&\int_{t_{m-k-2}}^s\mathbb{E}\left\Vert (G^h_{m-k-2})'(X^h(t_{m-k-2})) e^{(A_h+J^h_{m-k-2})(s-\sigma)}P_hQ^{\frac{1}{2}}\right\Vert^2_{\mathcal{L}_2(H)}d\sigma ds\\
&\leq& C\Delta t\sum_{k=}^{m-2}\int_{t_{m-k-2}}^{t_{m-k-1}}\int_{t_{m-k-2}}^s\mathbb{E}\left[\left\Vert (G^h_{m-k-2})'(X^h(t_{m-k-2})) e^{(A_h+J^h_{m-k-2})(s-\sigma)}P_hQ^{\frac{1}{2}}\right\Vert^2_{\mathcal{L}_2(H)}\right]d\sigma ds.
\end{eqnarray*}
}
Using \lemref{lemma11}, inserting an appropriate power of $-A_h$, using \lemref{lemma10} \lemref{lemma8} (ii) with $\gamma_1=0$ (if $\beta\geq 1$) and \lemref{lemma8} (ii) with $\gamma_1=\frac{1-\beta}{2}$ (if $\beta\leq 1$) , it holds that
\begin{eqnarray}
\label{revi10}
&&\mathbb{E}\left[\left\Vert (G^h_{m-k-2})'(X^h(t_{m-k-2})) e^{(A_h+J^h_{m-k-2})(s-\sigma)}P_hQ^{\frac{1}{2}}\right\Vert^2_{\mathcal{L}_2(H)}\right]\nonumber\\
&\leq& \mathbb{E}\left[\left\Vert  e^{(A_h+J^h_{m-k-2})(s-\sigma)}(-A_h)^{\frac{1-\beta}{2}}(-A_h)^{\frac{\beta-1}{2}}P_hQ^{\frac{1}{2}}\right\Vert^2_{\mathcal{L}_2(H)}\right]\nonumber\\
&\leq& \mathbb{E}\left[\left\Vert  e^{(A_h+J^h_{m-k-2})(s-\sigma)}(-A_h)^{\frac{1-\beta}{2}}\right\Vert^2_{L(H)}\left\Vert(-A_h)^{\frac{\beta-1}{2}}P_hQ^{\frac{1}{2}}\right\Vert^2_{\mathcal{L}_2(H)}\right]\nonumber\\
&\leq& C(s-\sigma)^{\min(-1+\beta,0)}.
\end{eqnarray}
Substituting \eqref{revi10} in \eqref{revi9} yields 
\begin{eqnarray}
\label{revi11}
V_{13}&\leq &C\Delta t\sum_{k=0}^{m-2}\int_{t_{m-k-2}}^{t_{m-k-1}}\int_{t_{m-k-2}}^s(s-\sigma)^{\min(-1+\beta,0)}d\sigma ds\nonumber\\
&\leq &C\Delta t^{\min(1+\beta,2)}.
\end{eqnarray}
To estimate $\sqrt{V_{14}}$, we  note by using \lemref{lemma11} and \lemref{lemma4} that
\begin{eqnarray}
\label{revi12}
\Vert (-A_h)^{-\frac{\eta}{2}} R_{G^h_{m-k-2}}\Vert_{L^2(\Omega,H)}&\leq& C\left\Vert \Vert X^h(s)-X^h(t_{m-k-2})\Vert^2\right\Vert_{L^2(\Omega,H)}\nonumber\\
&\leq& C\Vert X^h(s)-X^h(t_{m-k-2})\Vert^2_{L^4(\Omega,H)}\nonumber\\
&\leq& C\Delta t^{\min(\beta,1)}.
\end{eqnarray}
Therefore the following estimate holds for $\sqrt{V_{14}}$
\begin{eqnarray}
\label{revi13}
\sqrt{V_{14}}\leq C\Delta t^{\min(\beta,1)}.
\end{eqnarray}
Substituting \eqref{revi13}, \eqref{revi13}, \eqref{revi8} and \eqref{revi7} in \eqref{revi3} yields 
\begin{eqnarray}
\label{revi14}
V_1\leq C\Delta t^{\beta-2\epsilon}.
\end{eqnarray}
Substituting \eqref{revi14} and \eqref{revi1a} in \eqref{revi1} yields 
\begin{eqnarray}
\label{revi15}
V\leq C\Delta t^{\beta-2\epsilon}+C\Delta t\sum_{k=0}^{m-1}\Vert X^h(t_k)-X^h_k\Vert^2_{L^2(\Omega,H)}.
\end{eqnarray}
Let us move to the estimate of $IV$. Applying the It\^{o}-isometry to \eqref{noise1} yields
\begin{eqnarray}
\label{quatre1}
IV&\leq& \int_{t_{m-1}}^{t_m}\left\Vert\left(e^{(A_h+J^h_{m-1})(t_m-s)}-e^{(A_h+J^h_{m-1})\Delta t}\right)P_hQ^{\frac{1}{2}}\right\Vert^2_{\mathcal{L}_2(H)}ds\\
&=&\int_{t_{m-1}}^{t_m}\left\Vert e^{(A_h+J^h_{m-1})(t_m-s)}\left(\mathbf{I}-e^{(A_h+J^h_{m-1})(s-t_{m-1})}\right)P_hQ^{\frac{1}{2}}\right\Vert^2_{\mathcal{L}_2(H)}ds\nonumber.
\end{eqnarray}
Inserting $(-A_h)^{\frac{1-\beta}{2}}(-A_h)^{\frac{\beta-1}{2}}$ in \eqref{quatre1},  using  \eqref{trace1} and \lemref{lemma10} yields
{\small
\begin{eqnarray}
\label{quatre2}
IV&\leq&\int_{t_{m-1}}^{t_m}\left\Vert e^{(A_h+J^h_{m-1})(t_m-s)}\left(\mathbf{I}-e^{(A_h+J^h_{m-1})(s-t_{m-1})}\right)(-A_h)^{\frac{1-\beta}{2}}\right\Vert^2_{L(H)}\nonumber\\
&&\times\left\Vert (-A_h)^{\frac{\beta-1}{2}}P_hQ^{\frac{1}{2}}\right\Vert^2_{\mathcal{L}_2(H)}ds\nonumber\\
&\leq&C\int_{t_{m-1}}^{t_m}\left\Vert e^{(A_h+J^h_{m-1})(t_m-s)}\left(\mathbf{I}-e^{(A_h+J^h_{m-1})(s-t_{m-1})}\right)(-A_h)^{\frac{1-\beta}{2}}\right\Vert^2_{L(H)}ds.
\end{eqnarray}
}
Inserting $(-A_h)^{\frac{1-\epsilon}{2}}(-A_h)^{\frac{\epsilon-1}{2}}$ in \eqref{quatre2} and using \lemref{lemma8} (ii)  with $\gamma_1=\frac{1-\epsilon}{2}$, \lemref{lemma8} (iv) with $\gamma_1=\frac{1-\epsilon}{2}$ and $\gamma_2=\frac{1-\beta}{2}$ (or \lemref{lemma8} with $\gamma_1=\frac{1-\epsilon}{2}$ and $\gamma_2=\frac{\beta-1}{2}$ if $\beta\in[1,2]$) yields
\begin{eqnarray}
\label{quatre3}
IV&\leq& C\int_{t_{m-1}}^{t_m}\left\Vert e^{(A_h+J^h_{m-1})(t_m-s)}(-A_h)^{\frac{1-\epsilon}{2}}\right\Vert^2_{L(H)}\nonumber\\
&&\times\left\Vert(-A_h)^{\frac{\epsilon-1}{2}}\left(\mathbf{I}-e^{(A_h+J^h_{m-1})(s-t_{m-1})}\right)(-A_h)^{\frac{1-\beta}{2}}\right\Vert^2_{L(H)}ds\nonumber\\
&\leq& C\int_{t_{m-1}}^{t_m}(t_m-s)^{-1+\epsilon}(s-t_{m-1})^{\beta-\epsilon}ds\nonumber\\
&\leq& C\Delta t^{\beta-\epsilon}\int_{t_{m-1}}^{t_m}(t_m-s)^{-1+\epsilon}ds\leq C\Delta t^{\beta}.
\end{eqnarray}
Let us now turn our attention to the estimate of $VI$. Since the expectation of the cross-product vanishes,  using Cauchy-Schwartz inequality, it follows from \eqref{noise2} that
\begin{eqnarray*}
VI&=&\mathbb{E}\left\Vert\sum_{k=0}^{m-2}\int_{t_{m-k-2}}^{t_{m-k-1}} e^{(A_h+J^h_{m-1})\Delta t}\cdots e^{(A_h+J^h_{m-k-1})\Delta t}e^{(A_h+J^h_{m-k-2})(t_{m-k-1}-s)}\right.\nonumber\\
&&\left.\left(\mathbf{I}-e^{(A_h+J^h_{m-k-2})(s-t_{m-k-2})}\right)P_hdW(s)\right\Vert^2\nonumber\\
&=&\sum_{k=0}^{m-2}\mathbb{E}\left\Vert e^{(A_h+J^h_{m-1})\Delta t}\cdots e^{(A_h+J^h_{m-k-1})\Delta t}\right.\nonumber\\
&&\left.\int_{t_{m-k-2}}^{t_{m-k-1}}e^{(A_h+J^h_{m-k-2})(t_{m-k-1}-s)}\left(\mathbf{I}-e^{(A_h+J^h_{m-k-2})(s-t_{m-k-2})}\right)P_hdW(s)\right\Vert^2\nonumber\\
&\leq&\sum_{k=0}^{m-2}\left(\mathbb{E}\left\Vert e^{(A_h+J^h_{m-1})\Delta t}\cdots e^{(A_h+J^h_{m-k-1})\Delta t}(-A_h)^{\frac{1-\epsilon}{2}}\right\Vert^4_{L(H)}\right)^{\frac{1}{2}}\nonumber\\
&\times&\left(\mathbb{E}\left\Vert\int_{t_{m-k-2}}^{t_{m-k-1}}(-A_h)^{\frac{-1+\epsilon}{2}} e^{(A_h+J^h_{m-k-2})(t_{m-k-1}-s)}\left(\mathbf{I}-e^{(A_h+J^h_{m-k-2})(s-t_{m-k-2})}\right)P_hdW(s)\right\Vert^4\right)^{\frac{1}{2}}.
\end{eqnarray*}
Using the Burkh\"{o}lder-Davis-Gundy inequality (\cite[Lemma 5.1]{Raphael}), it follows that
\begin{eqnarray}
\label{noise33}
&&VI\\
&\leq &C\sum_{k=0}^{m-2}\left(\mathbb{E}\left\Vert e^{(A_h+J^h_{m-1})\Delta t}\cdots e^{(A_h+J^h_{m-k-1})\Delta t}(-A_h)^{\frac{1-\epsilon}{2}}\right\Vert^4_{L(H)}\right)^{\frac{1}{2}}\nonumber\\
&&\int_{t_{m-k-2}}^{t_{m-k-1}}\mathbb{E}\left\Vert (-A_h)^{\frac{-1+\epsilon}{2}} e^{(A_h+J^h_{m-k-2})(t_{m-k-1}-s)}\left(\mathbf{I}-e^{(A_h+J^h_{m-k-2})(s-t_{m-k-2})}\right)P_hQ^{\frac{1}{2}}\right\Vert^2_{\mathcal{L}_2(H)}ds.\nonumber
\end{eqnarray}
Inserting $(-A_h)^{\frac{1-\beta}{2}}(-A_h)^{\frac{\beta-1}{2}}$ in \eqref{noise33}, using \eqref{trace1} and  \lemref{lemma10} yields
\begin{eqnarray}
\label{noise3}
&&VI\\
&\leq&C\sum_{k=0}^{m-2}\left(\mathbb{E}\left\Vert e^{(A_h+J^h_{m-1})\Delta t}\cdots e^{(A_h+J^h_{m-k-1})\Delta t}(-A_h)^{\frac{1-\epsilon}{2}}\right\Vert^4_{L(H)}\right)^{\frac{1}{2}}\nonumber\\
&&\int_{t_{m-k-2}}^{t_{m-k-1}}\mathbb{E}\left\Vert (-A_h)^{\frac{-1+\epsilon}{2}} e^{(A_h+J^h_{m-k-2})(t_{m-k-1}-s)}\left(\mathbf{I}-e^{(A_h+J^h_{m-k-2})(s-t_{m-k-2})}\right)(-A_h)^{\frac{1-\beta}{2}}\right\Vert^2_{L(H)}\nonumber\\
&&\left\Vert(-A_h)^{\frac{\beta-1}{2}}P_h Q^{\frac{1}{2}}\right\Vert^2_{\mathcal{L}_2(H)}ds\nonumber\\
&\leq& C\sum_{k=0}^{m-2}\left(\mathbb{E}\left\Vert e^{(A_h+J^h_{m-1})\Delta t}\cdots e^{(A_h+J^h_{m-k-1})\Delta t}(-A_h)^{\frac{1-\epsilon}{2}}\right\Vert^4_{L(H)}\right)^{\frac{1}{2}}\nonumber\\
&&\int_{t_{m-k-2}}^{t_{m-k-1}}\mathbb{E}\left\Vert (-A_h)^{\frac{-1+\epsilon}{2}} e^{(A_h+J^h_{m-k-2})(t_{m-k-1}-s)}\left(\mathbf{I}-e^{(A_h+J^h_{m-k-2})(s-t_{m-k-2})}\right)(-A_h)^{\frac{1-\beta}{2}}\right\Vert^2_{L(H)}ds.\nonumber
\end{eqnarray}
Using \lemref{lemma9} with $\nu=\frac{1-\epsilon}{2}$ yields
\begin{eqnarray}
\label{noise4}
VI
&\leq&  C \sum_{k=0}^{m-2}\int_{t_{m-k-2}}^{t_{m-k-1}}t_{k+1}^{-1+\epsilon}\mathbb{E}\left\Vert(-A_h)^{\frac{\epsilon-1}{2}} e^{(A_h+J^h_{m-k-2})(t_{m-k-1}-s)}\right.\nonumber\\
&&\left.\left(\mathbf{I}-e^{(A_h+J^h_{m-k-2})(s-t_{m-k-2})}\right)(-A_h)^{\frac{1-\beta}{2}}\right\Vert^2_{L(H)}ds.
\end{eqnarray}
Inserting $(-A_h)^{\frac{1-\epsilon}{2}}(-A_h)^{\frac{\epsilon-1}{2}}$  in \eqref{noise4} and using \lemref{lemma8}  (iii) with $\gamma_1=\gamma_2=\frac{1-\epsilon}{2}$, \lemref{lemma8} (iv) with $\gamma_1=\frac{1-\epsilon}{2}$ and $\gamma_2=\frac{1-\beta}{2}$ when $0\leq \beta\leq 1$ and \lemref{lemma8} (i) with $\gamma_1=\frac{1-\epsilon}{2}$ and $\gamma_2=\frac{1-\beta}{2}$ when $1\leq \beta\leq 2$ yields
\begin{eqnarray}
\label{noise5}
VI&\leq&C\sum_{k=0}^{m-2}\int_{t_{m-k-2}}^{t_{m-k-1}}t_{k+1}^{-1+\epsilon}\mathbb{E}\left[\left\Vert (-A_h)^{\frac{\epsilon-1}{2}}e^{(A_h+J^h_{m-k-2})(t_{m-k-1}-s)}(-A_h)^{\frac{1-\epsilon}{2}}\right\Vert^2_{L(H)}\right.\nonumber\\
&&\times\left.\left\Vert (-A_h)^{\frac{-1+\epsilon}{2}}\left(\mathbf{I}-e^{(A_h+J^h_{m-k-2})(s-t_{m-k-2})}\right)(-A_h)^{\frac{1-\beta}{2}}\right\Vert^2_{L(H)}\right]ds\nonumber\\
&\leq& C\sum_{k=1}^{m-2}\int_{t_{m-k-2}}^{t_{m-k-1}}t_k^{-1+\epsilon}(s-t_{m-k-1})^{\beta-\epsilon}ds\nonumber\\
&\leq& C\Delta t^{\beta-\epsilon}\sum_{k=1}^{m-2}t_k^{-1+\epsilon}\int_{t_{m-k-2}}^{t_{m-k-1}}ds=C\Delta t^{\beta-\epsilon}\sum_{k=1}^{m-2}t_k^{-1+\epsilon}\Delta t.
\end{eqnarray}
 Let us recall the following estimate 
 \begin{eqnarray}
 \label{noise7a}
 \sum_{k=1}^{m-2}t_k^{-1+\epsilon}\Delta t\leq C.
 \end{eqnarray}
Inserting \eqref{noise7a} in \eqref{noise5} yields
\begin{eqnarray}
\label{noise8}
VI\leq C\Delta t^{\beta-\epsilon}.
\end{eqnarray}
Substituting  \eqref{noise8}, \eqref{quatre3}, \eqref{revi15} and \eqref{decom2} in \eqref{decom1} and applying the discrete Gronwall lemma yields
\begin{eqnarray}
\Vert X^h(t_m)-X^h_m\Vert_{L^2(\Omega, H)}\leq C\Delta t^{\beta/2-\epsilon/2}\leq C\Delta t^{\beta/2-\epsilon}.
\end{eqnarray}
This completes the proof of \thmref{mainresult2}.
\section{Numerical simulations}
\label{experiment}
Here we provide three examples to sustain our theoretical results. The first example has exact solution.
The  reference solution  or ''the exact solution'' used in  the errors computation for our second and third example are taken to be the numerical solution with small time step.
In the legends of our graphs, we use the following notations
\begin{enumerate}
 \item  SERS  denotes the strong errors from our SERS scheme.
 \item  SETD1 denotes  the strong errors from  the stochastic exponential scheme \cite{Antonio1} given by \eqref{setd1}.
\end{enumerate}
The exponential matrix function $\varphi_1$ is computed by Krylov subspace technique with fixed dimension $m=10$ and tolerance $tol=10^{-6}$ \cite{kry,Advances,ATthesis}.
Note that we compute at  every time step  the action on  the exponential matrix function on a vector and not the whole exponential matrix function. Our code was implemented in Matlab 8.1.
Note that the initial solution is taken to be $X_0=0$ throughout our simulations, so optimal convergence order in time will depend only on the regularity of the noise.
\subsection{Additive noise with exact solution}
We first consider the following stochastic reaction diffusion equation with stiff reaction driven by additive noise in two dimensions with Neumann boundary conditions
\begin{eqnarray}
\label{example1}
dX(t)=[D \varDelta X(t)-100 X(t)]dt+dW(t), \quad X(0)=X_0, \quad t\in[0,T],
\end{eqnarray}
on the domain $\Lambda=[0,L_1]\times[0,L_2]$, $D=10^{-1}.$ 
A simple computation shows that the eigenfunctions   $\{e_{i,j}\}_{i,j\geq 0}=\{e^{(1)}_i\otimes e^{(2)}_j\}_{i,j\geq 0}$  
with the corresponding  eigenvalues $\{\lambda_{i,j}\}_{i,j\geq 0}=\{(\lambda_i^{(1)})^2+(\lambda_j^{(2)})^2\}$ of  $-\varDelta$ are given by
\begin{eqnarray}
\label{eigenfunction}
e^{(l)}_0(x)=\sqrt{\frac{1}{L_l}}, \quad e^{(l)}_i(x)=\sqrt{\frac{2}{L_l}}\cos(\lambda_i^{(l)}x), \quad \lambda_0^{(l)}=0, \quad \lambda_i^{(l)}=\frac{i\pi}{L_l},
\end{eqnarray}
where $l=1,2$, $x\in\Lambda$ and $i\in\mathbb{N}$.  In the abstract form \eqref{model} our linear operator $A$ is taken to be $A=D\varDelta$ and $F(X)=-100X$ 
which obviously satisfies Assumption \ref{assumption3} and Assumption \ref{assumption6b}.  We take $L_1=L_2=1$ and the triangulation $\mathcal{T}$ has been constructed from uniform Cartesian grid of sizes $\Delta x=\Delta y=1/100$.

We assume that the covariance operator $Q$ and $A$ have the same eigenfunctions. We take the following values for $\{q_{i,j}\}_{i+j>0}$ in the representation  \eqref{noise} 
\begin{eqnarray}
\label{eigennoise}
q_{i,j}=\dfrac{1}{(i^2+j^2)^{\beta+\delta}}, \quad 0\leq\beta\leq 2,\,\,\,\text{and}\quad \delta>0\,\,\,\text{small enough}.
\end{eqnarray}
We can easily prove that \assref{assumption6a} is fulfilled,  since
\begin{eqnarray*}
  \underset{(i,j) \in \mathbb{N}^{2}}{\sum}\lambda_{i,j}^{\beta-1}q_{i,j}<  \pi^{2}\underset{(i,j) \in \mathbb{N}^{2}}{\sum} \left( i^{2}+j^{2}\right)^{-(1+\delta)} <\infty, \;\;\;\;\; \quad  0\leq \beta \leq 2.
\end{eqnarray*}
To have trace class noise, it is enough to take  $\beta+\delta >1$.
 We take $\beta =1$ and  $\delta =0.001$. According to \thmref{mainresult2}, the order of  convergence 
in time should be close to $0.5$.
The exact solution of  \eqref{example1} is constructed in \cite{Antonio2}.
\begin{figure}[!ht]
 \includegraphics[width=0.75\textwidth]{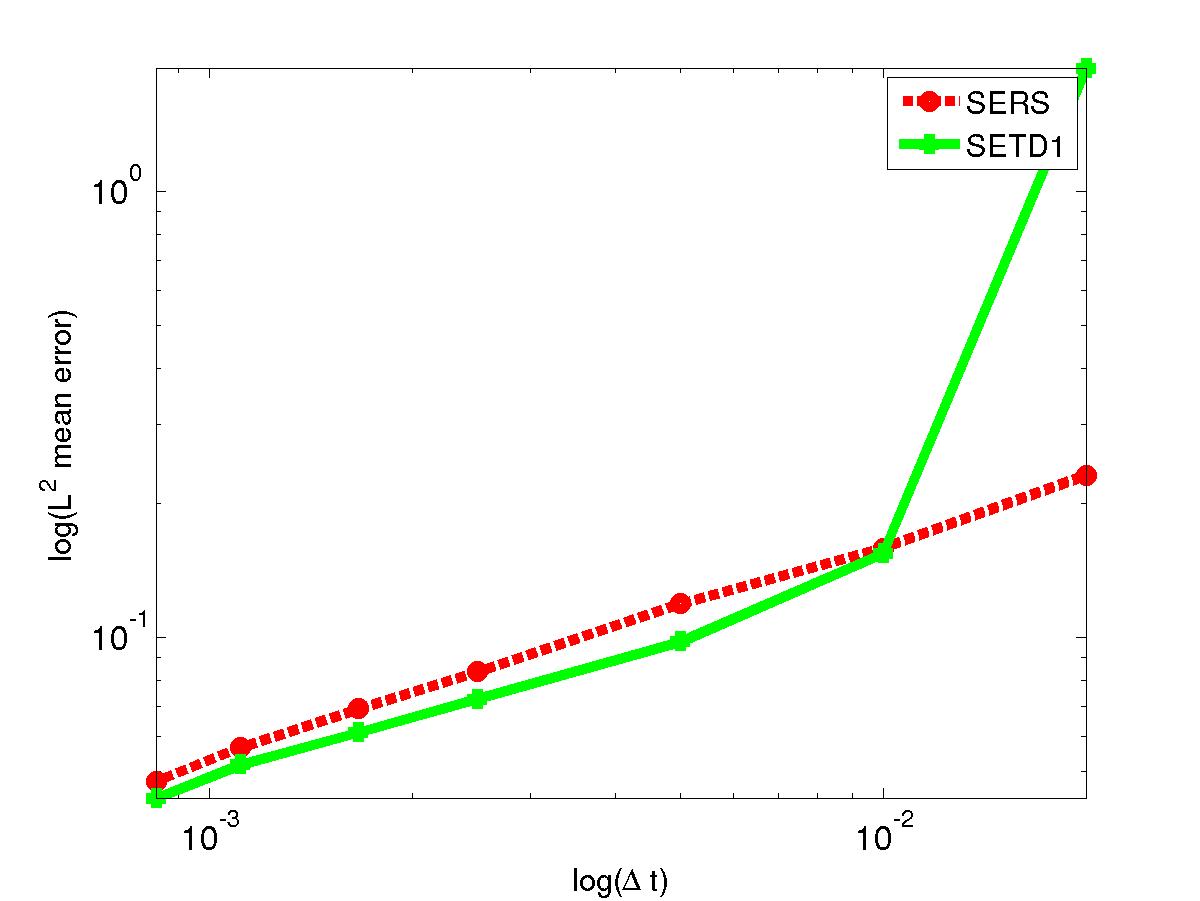}
  \caption{Strong convergence of SERS and SETD1 scheme, we can also observe that SETD1 is unstable for large time steps.
  The orders of convergence of the two methods are  $0.4971$ and $0.4980$ for SERS and SETD1 schemes respectively. The noise regularity parameter used is $\beta=1$ and  $50$
  samples are used in the errors computation.}
  \label{fig1}
\end{figure}
\figref{fig1} shows the  strong convergence of  SERS and SETD1 schemes. 
This figure also shows that SETD1 is unstable for large time steps.  We can  observe the good stability property of  the new  SERS scheme even for large time steps.
We can also  observe that the two schemes have the same order of accuracy. Indeed although SETD1 
seems to be more accurate, the two graphs become very close for small time step. The orders of convergence of the two methods are  $0.4971$ and $0.4980$ for SERS and SETD1 schemes respectively, 
which are very close to  theoretical results. 
Note that we only use  the stable part of  the data in the computation of the order of convergence  of SETD1 scheme.

\subsection{Additive noise without exact solution and with locally Lipschitz nonlinear function}
We  consider here  the following stochastic reaction diffusion equation  driven by additive noise in two dimensions with Neumann boundary conditions
\begin{eqnarray}
\label{example2}
dX(t)=[D \varDelta X(t)+ X(t)-X(t)^3]dt+dW(t), \quad X(0)=X_0,
\end{eqnarray}
on the domain $\Lambda=[0,L_1]\times[0,L_2]$, $D=10^{-2}$ and $t\in[0,T]$.   We take $L_1=L_2=1$ and the triangulation $\mathcal{T}$ has been constructed from uniform Cartesian grid of sizes $\Delta x=\Delta y=1/100$.
The  reference solution  or ''the exact solution'' using in  the errors computation is the numerical solution with the time step $\Delta t=1/2048$. 
The goal of this example is to prove that our novel scheme can be stable  and convergent for more complicated nonlinear function $F(X)=X-X^3$.
Although the existence and the uniqueness of  the solution of \eqref{example2} is well known \cite{Jentzen22,Prato}, the well-posedness of the numerical solution with our novel scheme
is not yet understood since  the nonlinear function is only locally  Lipschitz \cite{Prato,Jentzen22}. In our simulation, the  noise's representation \eqref{eigennoise}
is used with $\beta=1.2$  and $\delta=0.001$. The orders of convergence of the two methods are  $0.65$ and $0.62$ for SERS and SETD1 schemes respectively.
If our convergence theorem (\thmref{mainresult2}) was also valid for locally Lipschitz nonlinear function $F$, our convergence orders  should be then close to the expected order $0.6$.
We can also  observe that the two schemes have the same order of accuracy. Indeed although SETD1 
seems to be more accurate, the two graphs become very close for small time step. We can  also observe the good stability property of  the new  SERS scheme even for large time step.


\begin{figure}[!ht]
 \includegraphics[width=0.75\textwidth]{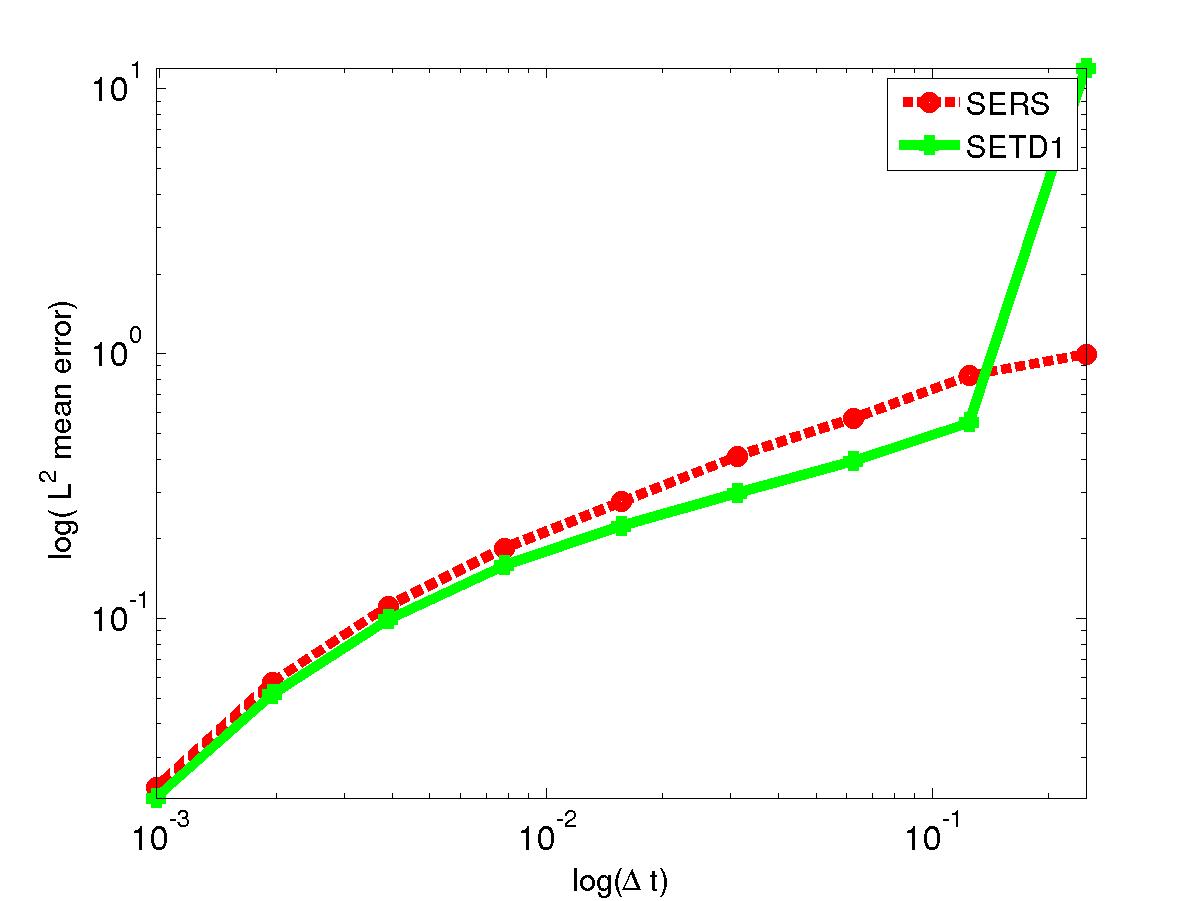}
 \caption{Strong convergence of SERS and SETD1 scheme can be observed for  nonlinear function $F(X)=X-X^3$.
  The orders of convergence of the two methods are  $ 0.65$ and $0.52$ for SERS and SETD1 schemes respectively. The noise regularity parameter used is $\beta=1.2$ and  $50$
  samples are used in the errors computation.}
  \label{fig3}
\end{figure}

\subsection{Multiplicative noise without exact solution}
As a more challenging example, we consider  the stochastic advection-diffusion-reaction SPDE with multiplicative noise in two dimensions on the domain $\Lambda=[0,1]\times[0,1]$. 
\begin{eqnarray}
\label{reactiondif1}
dX&=&\left[\nabla \cdot (\mathbf{D}\nabla X)-\nabla \cdot(\mathbf{q}X)-\dfrac{10 X}{ X +1}\right]dt+XdW.\\
\mathbf{D}&=&\left( \begin{array}{cc}
             10^{-2}&0\\
             0& 10^{-3}
             \end{array}\right)
\end{eqnarray}
with mixed Neumann-Dirichlet boundary conditions. The Dirichlet boundary condition is $X=1$ at $x=0$ 
and we use the homogeneous Neumann boundary conditions elsewhere. The Darcy velocity  $\mathbf{q}$ is obtained as in \cite{Antonio1}  and  to deal with high  P\'{e}clet flows 
we discretize in space using finite volume method (viewed as the finite element method as in \cite{Antonio3}) in rectangular grid of sizes $\Delta x=\Delta y= 1/110$. 
The  reference solution  or ''the exact solution'' using in  the errors computation is the numerical solution with the time step $\Delta t=1/2048$. 
Relatively small time steps are used to stabilize  the scheme SETD1.
The noise used is the same as in the first example with \eqref{eigennoise} and $\beta=1$ and $\delta=0.001$, corresponding to trace class noise.
Our linear operator $A$ is given by 
\begin{eqnarray}
 A=\nabla \cdot  \mathbf{D}\nabla (.) -  \nabla \cdot \mathbf{q}(.).
\end{eqnarray}
and the functions $f$ and $b$ are given by 
\begin{eqnarray}
\label{nemisk2}
f(x,u)=\dfrac{-10 u}{ u+1}, \quad b(x,u)=u,\quad \forall x\in\Lambda, \quad u\in\mathbb{R}.
\end{eqnarray}
Therefore, from \cite[Section 4]{Arnulf1} it follows that the operators $F$ and $B$ defined by \eqref{nemystskii} fulfil  obviously \assref{assumption3} and \assref{assumption4}.

\begin{figure}[!ht]
 \includegraphics[width=0.75\textwidth]{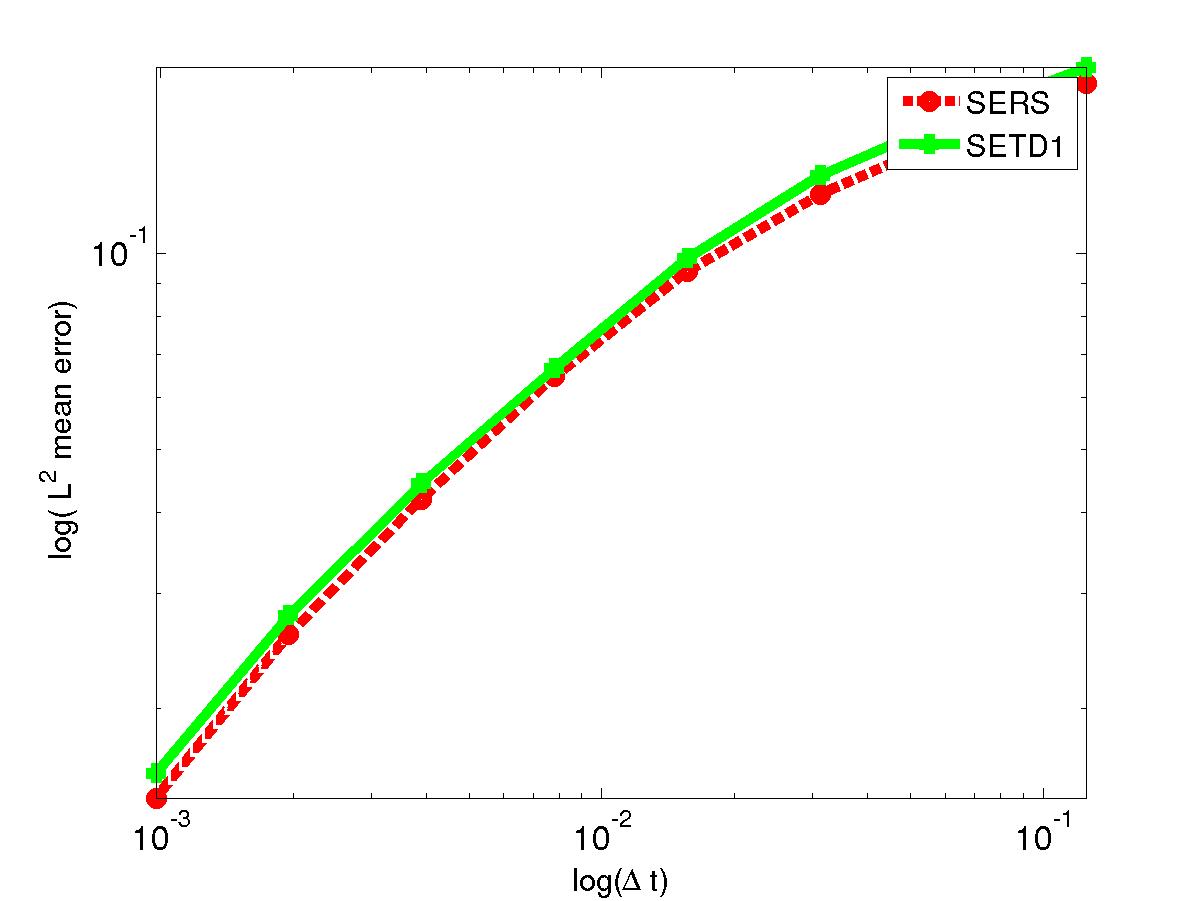}
 \caption{Strong convergence of SERS and SETD1 scheme can be observed.
  The orders of convergence of the two methods are  $ 0.5367$ and $0.5337$ for SERS and SETD1 schemes respectively. The noise regularity parameter used are $\beta=1$ and  $\delta=0.001$. Note that $50$
  samples have been used in the errors computation.}
  \label{fig2}
\end{figure}
\figref{fig2} shows the  strong convergence of  SERS scheme and SETD1 scheme presented in \cite{Antonio1}. 
We can also  observe that the two schemes have the same order of accuracy. Indeed although SERS
seems to be more accurate, the difference  between the two errors is small. 
The orders of convergence of the two methods are  $0.5367$ and $0.5337$ for SERS and SETD1 schemes respectively, 
which are very close to  $0.5$ (from  the theoretical results in \thmref{mainresult1}). 

\section{Concluding remark}
\label{concludremark}
In this work, we have analyzed the strong convergence of the exponential Rosenbrock-Euler method for a semilinear parabolic SPDE. The method is 
based on a continuous linearization of the problem at each time step. The linearisation technique consists of  
adding the Jacobian of the nonlinear function to a linear operator  while the nonlinear function is replaced by its reminder. 
The linear operator is assumed to be a generator of an  analytic semigroup. By \cite[Theorem 2.10, Page 176, Chapter 3]{Klaus}
there exists a constant $a>0$ such that $A+L$ generates an analytic semigroup for every $A$-bounded operator $L$ having $A$-bound $a_0<a$ (see \cite[Definition 2.1, Page 169, Chapter 3]{Klaus}).
As the nonlinear function $F$ is assumed to be Fr\'{e}chet differentiable with bounded derivative in our analysis,  
we can weaken that hypothesis on $F$ by  replacing \assref{assumption3} by the following  weaker assumption.
\begin{assumption}
\label{assumption7}
The nonlinear function $F : H\longrightarrow H$ is assumed to be Fr\'{e}chet differentiable with derivative relatively $A$-bounded with $A$-bound $a_0<a$, i.e there exist constants $ a_0\in[0,a)$ and $b\geq 0$ such that
\begin{eqnarray}
\label{ineqrev}
\Vert F'(u)v\Vert \leq a_0\Vert Av\Vert+b\Vert v\Vert, \quad u\in H,\, v\in \mathcal{D}(A).
\end{eqnarray}
\end{assumption}
Under \assref{assumption7}, for all $\omega\in\Omega$, $A_h+J^h_m(\omega)$ generates an analytic semigroup $S^h_m(\omega)(t):=e^{(A_h+J^h_m(\omega))t}$  (see \cite[Theorem 2.10, page 176, Chapter 3]{Klaus}) 
and therefore the numerical scheme \eqref{erem} is well posed. Under \assref{assumption7}, the convergence analysis of the numerical method \eqref{erem}
is not straightforward. This is due to the presence of the linear operator $A$ in the right hand side of \eqref{ineqrev}, which may produce some irregularities. 
This will be our interest for future work. Further investigations will be done also for locally Lipschitz nonlinear function $F$.

\section*{Acknowledgement}

This work was supported by the German Academic Exchange Service (DAAD) (DAAD-Project 57142917) and  the Robert Bosch Stiftung through the AIMS ARETE Chair programme (Grant No 11.5.8040.0033.0). Part of this
work was done when Antoine Tambue visited TU Chemnitz. The visit was supported by 
TWAS-DFG Cooperation Visits Programme. 


\begin{thebibliography}{00}

\bibitem{LE1}
Baglama, J.,  Calvetti, D.,  and Reichel,L.:
\newblock Fast L\'{e}ja points.
\newblock {\em Electron. Trans. Num. Anal.}, 7, 124--140 (1998)

\bibitem{Mora2}
 Carlos, M., M.:
\newblock{Weak exponential schemes for stochastic differential equations with additive noise.}
\newblock{ IMA Journal of Numerical Analysis. 25, 486-506 (2005)}

\bibitem{Mora1}
Carlos, M., M.:
\newblock{Numerical solution of conservative finite-dimensional stochastic Schr\"{o}dinger equations}.
\newblock{\em Ann. Appl. Probab. 15, 2144-2171 (2005)}


\bibitem{Larsson1}
 Elliot, C. M.,   Larsson S.:
\newblock{Error estimates with smooth and nonsmooth data for a finite element method for the Cahn-Hilliard equation.}
\newblock{ Math. Comput. 58, 603-630 (1992)}


\bibitem{Klaus}
Engel, K. J.,  Nagel, R.:
\newblock{One-Parameter Semigroup for Linear Evolution Equations}.
\newblock{ Springer-Verlag, New York, Inc, 2000}


\bibitem{Suzuki} 
  Fujita, F., Suzuki, T.:
\newblock{ Evolution problems (Part 1). Handbook of Numerical Analysis (P. G. Ciarlet and J.L. Lions eds)}.
\newblock{vol. 2. Amsterdam, The Netherlands: North-Holland, pp. 789-928 (1991)}


 \bibitem{SebaGatam}
Geiger S., Lord G.,  Tambue A.:
\newblock Exponential time integrators for stochastic partial differential equations in 3{D} reservoir simulation.
\newblock {\em Computational Geosciences}, 16(2),  323 -- 334 (2012)


\bibitem{Henry}
  Henry, D.:
\newblock{Geometric Theory of semilinear parabolic Equations.}
\newblock{Lecture notes in Mathematics, vol. 840. Berlin : Springer, 1981}



\bibitem{kry}
Hochbruck, M., Lubich, C.:
\newblock On {K}rylov subspace approximations to the matrix exponential
  operator.
\newblock {\em SIAM J. Numer. Anal.}, 34(5),   1911--1925 (1997)

  \bibitem{Alex1}
 Hochbruck, M.,  Ostermann, A.:
\newblock{Exponential integrators.}
\newblock{ Acta Numerica. 19, 209-286 (2010)} 


\bibitem{Ostermann2}
 Hochbruck, M., Ostermann, A.,  Schweitzer, J.:
\newblock{Exponential Rosenbrock-type methods.}
\newblock{SIAM J. Numer. Anal. 47(1), 786-803  (2009)}

\bibitem{Ostermann1}
 Hochbruck, M.,   Ostermann, A.:
\newblock{Explicit integrators of Rosenbrock-type.}
\newblock{Oberwolfach Reports, 3(2), 1107-1110 (2006)}



\bibitem{Jentzen1}
 Jentzen, A.,  Kloeden, P. E.:
\newblock{Overcoming the order barrier in the numerical approximation of stochastic partial differential equations with additive space–time noise.}
\newblock{Proc. R. Soc. Lond. Ser. A Math. Phys. Eng. Sci. 465(2102),  649-667 (2009)}

\bibitem{Jentzen2}
 Jentzen, A.,  Kloeden, P. E.,  Winkel, G.:
\newblock{Efficient simulation of nonlinear parabolic SPDEs with additive noise}.
\newblock{Ann. Appl. Probab. 21(3), 908-950 (2011)}


\bibitem{Arnulf1}
 Jentzen, A.,   R\"{o}ckner, M.:
\newblock{Regularity analysis for stochastic partial differential equations
with nonlinear multiplicative trace class noise}.
\newblock{\em J. Diff. Equat. 252(1), 114-136 (2012)}

\bibitem{Jentzen22}
Jentzen, A.:
\newblock  {Pathwise Numerical approximations of SPDEs
with additive noise under non-global Lipschitz coefficients}.
 \newblock { \em Potential Analysis}, 31(4), 375--404 (2009)
 



\bibitem{Gaby1}
 Kloeden, P. E.,  Lord, G. J.,  Neuenkirch, A.,  Shardlow, T.:
\newblock{The exponential integrator scheme for stochastic partial differential equations : Pathwise errors bounds}
\newblock{J. Comput.  Appl. Math. 235(5), 1245-1260 (2011)}

\bibitem{Kloeden}
  Kloeden, P. E., Platen, E.:
\newblock{Numerical solution of stochastic Differential equations.}
\newblock{Springer, Berlin (1992)}



\bibitem{Kovac1}
 Kov\'{a}cs M.,  Larsson, S., and Lindgren, F.:
\newblock{Strong convergence of the finite element method with truncated
noise for semilinear parabolic stochastic equations with additive noise.}
\newblock{Numer. Algor. 53, 309-220 (2010)}


\bibitem{Raphael}
 Kruse, R.:
\newblock{Optimal error estimates of Galerkin finite element methods for stochastic partial differential equations with multiplicative noise.}
\newblock{IMA J.  Numer. Anal. 34(1), 217-251 (2014)}
  
\bibitem{Stig1}
 Kruse, R.,  Larsson, S.:
\newblock{Optimal regularity for semilinear stochastic partial
differential equations with multiplicative noise.}
\newblock{Electron. J. Probab. 17(65), 1-19 (2012)}
  
  

\bibitem{Larsson2}
  Larsson, S.:
 \newblock{Nonsmooth data error estimates with applications to the study of the long-time behavior of finite element solutions of semilinear parabolic problems}
\newblock{Preprint 1992-36, Department of Mathematics, Chalmers University of Technology (1992).}


\bibitem{Antonio1}
 Lord, G. J.,  Tambue, A.:
\newblock{Stochastic exponential integrators for the finite element discretization of SPDEs for multiplicative \& additive noise}.
\newblock{IMA J. Numer. Anal. 2, 515-543 (2013)}


\bibitem{Antjd1}
 Mukam, J. D.,  Tambue, A.:
\newblock{A note on exponential Rosenbrock-Euler method for the finite element discretization of a semilinear parabolic partial differential equations.}
\newblock{arXiv:1610.05525, 2016}



\bibitem{Pazy}
 Pazy, A.:
\newblock Semigroups of Linear Operators and Applications to Partial Differential Equations.
\newblock{Applied Mathematical Sciences; Springer-Verlag, New York, v 44, 1983}
 


\bibitem{Prato}
 Prato, D.,  Zabczyk, G. J.:
\newblock{Stochastic equations in infinite dimensions.}
\newblock{Encyclopedia of Mathematics and its Applications, vol. 44. Cambridge: Cambridge University Press (1992)}


\bibitem{Printems}
 Printems, J.:
\newblock{On the discretization in time of parabolic stochastic partial differential equations.}
\newblock{Math. modelling and numer. Anal. 35(6), 1055-1078  (2001)}


\bibitem{Prevot}
 Pr\'{e}v\^{o}t, C.,  R\"{o}ckner,  M.:
\newblock{ A Concise Course on Stochastic Partial Differential Equations}.
\newblock{Lecture Notes in Mathematics, vol. 1905 (Springer, Berlin, 2007)}

\bibitem{Ramos}
 Ramos, J. I., Garc\'{i}a-L\'{o}pez,  C. M.:
\newblock{Piecewise-linearized methods for initial-value problems.}
\newblock{Appl. Math. Comput. 82(2-3), 273-302 (1997)}


\bibitem{Julia1}
 Schweitzer, J.:
\newblock{The exponential Rosenbrock-Euler method for nonsmooth initial data}
\newblock{Preprint, 2015}

  \bibitem{Julia2}
  Schweitzer, J.:
 \newblock{The exponential Rosenbrock-Euler method with variable time step sizes for nonsmooth initial data}.
  \newblock{Technical report, Karlsruhe Institute of Technology, 2014}
  
  \bibitem{shardlow:2003}
Shardlow, T.:
\newblock {Weak convergence of a numerical method for a stochastic heat
  equation}.
\newblock {\em BIT.} 43(1), 179--193 (2003)

\bibitem{ATthesis}
Tambue, A.:
\newblock Efficient Numerical Schemes for Porous Media Flow.
\newblock {\em PhD Thesis, Department of Mathematics, Heriot--Watt University}, 2010

\bibitem{Antonio3}
 Tambue, A.:
\newblock {An exponential integrator for finite volume discretization of a reaction-advection-diffusion equation}.
\newblock{ Comput. Math. Appl. 71(9), 1875--1897 (2016)}
  

\bibitem{Advances}
Tambue, A.,   Berre, I.,   Nordbotten, J., M.:
\newblock {Efficient simulation of geothermal processes in heterogeneous porous media based on the exponential Rosenbrock--Euler and Rosenbrock-type methods}.
\newblock {\em  Advances in Water Resources}. {\bf 53}, 250--262 (2013)

\bibitem{Antonio2}
 Tambue, A.,  Ngnotchouye, J. M. T.:
\newblock{Weak convergence for a stochastic exponential integrator and finite element discretization of stochastic partial differential equation with multiplicative \& additive noise}.
\newblock{Appl. Num. Math. 108, 57-86 (2016)}

\bibitem{Vidar}
 Thom\'{e}e,  V.:
\newblock{Galerkin Finite Element Methods for Parabolic Problems.}
\newblock{Springer-Verlag, Berlin, (1997)}

\bibitem{Xiaojie3}
Wang, X.:
\newblock{Weak error estimates of the exponential Euler scheme for semi-linear SPDE without Malliavin calculus.}
\newblock{\em Discret. Contin. Dyn. Sys. Ser. A, 36, 481-497 (2016)}

\bibitem{Xiaojie2}
Wang, X.:
\newblock{Strong convergence rates of the linear implicit Euler method for the finite element discretization of SPDEs with additive noise.}
\newblock{\em IMA J. Numer. Anal.  DOI: https://doi.org/10.1093/imanum/drw016, 2016}

\bibitem{Xiaojie1}
 Wang, X.,  Ruisheng, Q.:
\newblock{A note on an accelerated exponential Euler method for parabolic SPDEs with additive noise.}
\newblock{Appl. Math. Lett. 46, 31-37 (2015)}

\bibitem{Yan2}
 Yan, Y.:
\newblock{Galerkin finite element methods for stochastic parabolic partial differential equations.}
\newblock{SIAM J. Num. Anal. 43(4), 1363-1384 (2005)}


\bibitem{Yan1}
 Yan, Y.:
\newblock{Semidiscrete Galerkin approximation for a linear stochastic parabolic partial differential equation driven by an additive noise}.
\newblock{\em BIT Numer. Math. 44(4), 829-847 (2004)}

\end{thebibliography}
\end{document}